\def\Id{\mathop{\rm{Id}}\nolimits}
\tikzset{circle/.style = {rounded corners,line width=1bp,color=#1}}%
\DeclareSymbolFont{bbold}{U}{bbold}{m}{n}
\DeclareSymbolFontAlphabet{\mathbbold}{bbold}
\tikzset{>=stealth',
	cvertex/.style={circle,draw=black,inner sep=1pt,outer sep=3pt},
	vertex/.style={circle,fill=black,inner sep=1pt,outer sep=3pt},
	star/.style={circle,fill=yellow,inner sep=0.75pt,outer sep=0.75pt},
	tvertex/.style={inner sep=1pt,font=\scriptsize},
	gap/.style={inner sep=0.5pt,fill=white}}
\setlist[enumerate]{format=\normalfont}
\newtheorem{theorem}{Theorem}[section]
\newtheorem{prop}[theorem]{Proposition}
\newtheorem{lemma}[theorem]{Lemma}
\newtheorem{definition}[theorem]{Definition}
\newtheorem{cor}[theorem]{Corollary}
\theoremstyle{definition}
\newtheorem{example}[theorem]{Example}
\newtheorem{remark}[theorem]{Remark}
\newtheorem{notation}[theorem]{Notation}
\numberwithin{equation}{section}
\DeclareMathOperator{\Rep}{\mathrm{Rep}}
\DeclareMathOperator{\Proj}{\mathrm{Proj}}
\DeclareMathOperator{\Ker}{\mathrm{Ker}}
\DeclareMathOperator{\QDet}{\mathrm{QDet}}
\DeclareMathOperator{\LM}{\mathrm{LM}}
\DeclareMathOperator{\LT}{\mathrm{LT}}
\DeclareMathOperator{\LCM}{\mathrm{LCM}}
\newcommand{\sfm}{\mathsf{m}}
\newcommand{\scrR}{\EuScript{R}}
\newcommand{\scrS}{\EuScript{S}}
\begin{document}
	\title[The Artin Component and Simultaneous Resolution]{The Artin Component and Simultaneous Resolution via Reconstruction Algebras of Type $A$ }
	\author{Brian Makonzi}
	\address{Brian Makonzi, Department of Mathematics, Makerere University, P.O Box 7260 Kampala, Uganda \& The Mathematics and Statistics Building, University of Glasgow, University Place, Glasgow, G12 8QQ, UK}
	\email{mckonzi@gmail.com,~2579843M@student.gla.ac.uk}
	\begin{abstract}
This paper uses noncommutative resolutions of non-Gorenstein singularities to construct classical deformation spaces, by recovering the Artin component of the deformation space of a cyclic surface singularity using only the quiver of the corresponding reconstruction algebra. The relations of the reconstruction algebra are then deformed, and the deformed relations together with variation of the GIT quotient achieve the simultaneous resolution. This extends work of Brieskorn, Kronheimer, Grothendieck, Cassens--Slodowy and Crawley-Boevey--Holland into the setting of singularities $\mathbb{C}^2/H$ with $H \leq \mathrm{GL} (2, \mathbb{C})$, and furthermore gives a prediction for what is true more generally.	
	\end{abstract}
	\maketitle
	\parindent 20pt
	\parskip 0pt
	
	\maketitle

\section{introduction}
Noncommutative resolutions control many geometric processes, especially in dimension three and for Calabi--Yau (CY) geometry \cite{NCCR}\cite{NCD}. This paper restricts to dimension two, but considers the much more general setting of rational surface singularities. These need not be CY. In the case of cyclic quotients, it extracts from a noncommutative resolution, namely the reconstruction algebra, a classical invariant, called the Artin component. Furthermore, by introducing a deformed version of the reconstruction algebra, simultaneous resolution is achieved.

 \subsection{Motivation and Background}
 When $H\leq\mathrm{SL}(2,\mathbb{C}),$ the quotient singularities $\mathbb{C}^2/H$ are exactly the Kleinian singularities (equivalently, rational double points), and these all have embedding dimension $e$=3. Grothendieck and Brieskorn \cite{Resolution} \cite{defspace} construct the deformation space for these singularities and relate it to the Weyl group $W$ of the corresponding simple simply--connected complex Lie group. The versal deformation $D \to \mathfrak{h}_\mathbb{C}/W$ of a rational double point was constructed in \cite{defspace}, and after base change via the action of the Weyl group as in the diagram below, the resulting space $Art$ resolves simultaneously \cite{defspace}.

\[
\begin{tikzpicture}
\node (A) at (0,0) {$Art$};
\node (B) at (2,0) {$D$};
\node (a) at (0,-1) {$\mathfrak{h}_\mathbb{C}$};
\node (b) at (2,-1) {$\mathfrak{h}_\mathbb{C}/W$};
\draw[->] (A)--(B);
\draw[->] (A)--(a);
\draw[->] (a)--(b);
\draw[->] (B)--(b);
\end{tikzpicture}
\]

Kronheimer \cite{kronheimer} and Cassens--Slodowy \cite[\S3]{OnKleinianSing} use the McKay quiver to construct the semiuniversal deformation of Kleinian singularities and their simultaneous resolutions, of type $A_n,~D_n,~E_6,~E_7$ and $E_8$. This was later reinterpreted by Crawley-Boevey--Holland \cite{crawley1998noncommutative} in terms of the deformed preprojective algebra.

\medskip
The deformation theory of non-Gorenstein surface quotient singularities, namely those $\mathbb{C}^2/H$ for small finite groups $H\leq\mathrm{GL}(2,\mathbb{C})$ that are not inside $\mathrm{SL}(2,\mathbb{C})$, is more complicated. Artin \cite{ArtinComp} constructed a particular component (the Artin component) which is irreducible and admits a simultaneous resolution, again after a finite base change by some appropriate Weyl group $W$. 
\[
\begin{tikzpicture}
\node (A) at (0,0) {$Art$};
\node (B) at (2,0) {$D$};
\node (a) at (0,-1) {$\mathrm{H}^1_\mathbb{C}$};
\node (b) at (2,-1) {$\mathrm{H}^1_\mathbb{C}/W$};
\draw[->] (A)--(B);
\draw[->] (A)--(a);
\draw[->] (a)--(b);
\draw[->] (B)--(b);
\end{tikzpicture}
\]

Riemenschneider \cite{deformRational} computed the Artin component $Art$ for cyclic quotient singularities, then later in \cite[\S5]{RieCyclic} he used the McKay quiver and special representations as described by Wunram \cite{Wunram} to give an alternative description.  The Artin component can be described as a factor of a polynomial ring $\mathbb{C}[\mathsf{z}]$ with respect to some quasideterminantal relations $\mathrm{QDet}(\mathsf{z}),$ but Riemenschneider's method recovers this only after ignoring a very large number of variables.  Simultaneous resolution is also not obtained using the McKay quiver perspective. 

\medskip
In this paper we use the reconstruction algebra of \cite{TypeA}, which is strictly smaller than the McKay quiver, to both construct the Artin component on the nose, and extract its simultaneous resolution.

\subsection{Main Results}
For any cyclic group $\frac{1}{r}(1,a)$, the quiver of the corresponding reconstruction algebra is recalled in \S\ref{subsec:ReconA}, and will be written $Q$. With dimension vector $\updelta=(1,\hdots,1)$, consider the co-ordinate ring of the representation variety $\mathbb{C}[\Rep(\mathbb{C}Q,\updelta)]$, which carries a natural action of $ G \colonequals \textstyle \prod_{q \in Q_0} \mathbb{C}^{\ast}$. As shown in \S\ref{ReconAlg}, $\scrR^G$ is generated by cycles. These generate a $\mathbb{C}$--algebra $\mathbb{C}[\mathsf{z}],$ and they further satisfy quasideterminantal relations (recalled in \S\ref{Qdet}) which we will denote $\mathrm{QDet}(\mathsf{z}).$ The following is our first main result.

\begin{theorem}[{\ref{thm: main1}}]\label{thm: main1 intro}
For any group $\frac{1}{r}(1,a)$, there is an isomorphism $\scrR^G \cong \frac{\mathbb{C}[\mathsf{z}]}{\mathrm{QDet}(\mathsf{z})}$. 
\end{theorem}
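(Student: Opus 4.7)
The plan is to construct the natural ring homomorphism
\[
\phi \colon \frac{\mathbb{C}[\mathsf{z}]}{\mathrm{QDet}(\mathsf{z})} \longrightarrow \scrR^G
\]
sending each generator in $\mathsf{z}$ to the corresponding cycle in $\scrR = \mathbb{C}[\Rep(\mathbb{C}Q,\updelta)]$, and then to prove that $\phi$ is an isomorphism by establishing surjectivity and injectivity separately. Because $\scrR$ is itself a polynomial ring in the arrow variables, well-definedness of $\phi$ reduces to a direct monomial comparison: each quasideterminantal relation equates two cycle products that traverse precisely the same arrows with the same multiplicities, and so becomes a genuine polynomial identity in $\scrR$. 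Surjectivity is then immediate from the fact, established in \S\ref{ReconAlg}, that $\scrR^G$ is generated by cycles, since any cycle at any vertex may be rewritten as a product of the distinguished generators $\mathsf{z}$.

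The core of the argument is injectivity. I would attack it through a standard-monomial argument adapted to the combinatorics of the reconstruction quiver of $\frac{1}{r}(1,a)$. Fix a monomial order on $\mathbb{C}[\mathsf{z}]$ refining an appropriate length function on cycles. Because the quasideterminantal relations are essentially $2\times 2$ minors of ladder-shaped matrices of variables, their leading terms pick out a combinatorially tractable family of ``non-standard'' monomials, and the remaining standard monomials in $\mathbb{C}[\mathsf{z}]/\mathrm{QDet}(\mathsf{z})$ can be enumerated explicitly. In parallel, using the fact that a $G$-invariant monomial in $\scrR$ corresponds uniquely to a disjoint union of oriented cycles in $Q$, I would show that the images of these standard monomials under $\phi$ are precisely the ``reduced'' cycle monomials in $\scrR^G$---those whose arrow multi-set admits no further $\mathrm{QDet}$-substitution---and are therefore automatically linearly independent already in the ambient polynomial ring $\scrR$. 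A Hilbert series comparison between the two graded sides of $\phi$ can be used as a sanity check, and in fact provides an alternate route to injectivity once the standard monomials are enumerated.

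The main obstacle will be the combinatorial bookkeeping in the injectivity step. The quiver of the reconstruction algebra depends intricately on the Hirzebruch--Jung continued fraction expansion of $r/a$, and so does the precise shape of both the generating set $\mathsf{z}$ and of the quasideterminantal relations. Carrying out the standard-monomial reduction uniformly across all continued fraction lengths, and verifying that every cycle monomial in $\scrR^G$ has a unique normal form, is where the structural description of the reconstruction algebra recalled earlier in the paper becomes indispensable; this is also the step that replaces the ``very large number of variables'' approach of \cite{RieCyclic} with a clean presentation that matches $\mathrm{QDet}(\mathsf{z})$ on the nose.
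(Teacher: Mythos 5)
Your well-definedness and surjectivity steps match the paper's: $\varphi\colon\mathbb{C}[\mathsf{z}]\to\scrR^G$ kills the quasiminors by a direct monomial comparison (Proposition~\ref{SurjHom}) and is surjective because $\scrR^G$ is generated by the cycles (Proposition~\ref{gen Z}). The gap is in the injectivity step, which is the actual content of the theorem. Your plan rests on the assertion that distinct standard monomials modulo $\mathrm{QDet}(\mathsf{z})$ map to distinct arrow monomials of $\scrR$ --- equivalently, that ``every cycle monomial in $\scrR^G$ has a unique normal form'' --- so that linear independence becomes ``automatic''. But since $\ker\varphi$ is the toric ideal $I_M$ of the exponent matrix of the cycles, two distinct $\mathsf{z}$-monomials have the same image exactly when their difference lies in $I_M$, and the claim that $\mathrm{QDet}$-reductions always identify such pairs (i.e.\ that each fibre of $\varphi$ on monomials contains exactly one reduced monomial) is precisely the claim $I_M=\mathrm{QDet}(\mathsf{z})$. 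Nothing in the ladder shape of the quasiminors makes this automatic: a priori $\mathrm{QDet}(\mathsf{z})$ could be strictly smaller than the kernel, in which case some fibre would contain two reduced monomials and your independence argument would silently fail. Deferring this to ``combinatorial bookkeeping'' therefore leaves the core of the proof unestablished, and as phrased the argument is close to circular.

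For comparison, the paper proves exactly this identification, but by toric-ideal machinery rather than a straightening law: it computes a lattice spanning set of the integer kernel of the exponent matrix (the columns of $K$, via a Smith normal form argument, Corollary~\ref{cor: ker Z 1}); observes $I_L\subseteq\mathrm{QDet}(\mathsf{z})\subseteq I_M$, so that $I_M$ is the saturation of $\mathrm{QDet}(\mathsf{z})$ at the product of all variables (Lemma~\ref{satBig}); replaces that product by the smaller $E$ (Lemma~\ref{Lemma: replace P by E}); and then verifies by explicit S-polynomial computations and Buchberger's criterion that the quasiminors together with $E-u$ form a Gr\"obner basis of $\mathrm{QDet}+(E-u)$ in DegRevLex (Proposition~\ref{Sreduced} through Corollary~\ref{Sgrobnerbasis}), after which eliminating $u$ gives $I_M=\mathrm{QDet}(\mathsf{z})$. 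Your standard-monomial route could in principle be completed --- it would amount to producing a Gr\"obner basis of $\mathrm{QDet}(\mathsf{z})$ itself and proving uniqueness of normal forms on each monomial fibre, uniformly in the continued fraction $[\upalpha_1,\dots,\upalpha_n]$ --- but that is the hard computation the paper actually carries out, and your proposal asserts its conclusion rather than proving it.
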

In particular $\scrR^G$, which is constructed using only the quiver of the reconstruction algebra, precisely gives the Artin component of $\frac{1}{r}(1,a)$.   Since the reconstruction algebra exists for \emph{any} rational surface singularity, this gives a prediction for what can be expected much more generally.

\medskip
Simultaneous resolution is then achieved by introducing the deformed reconstruction algebra (see \ref{SecRecAlg}), which generalises the work of 
Crawley-Boevey--Holland \cite{crawley1998noncommutative} on deformed preprojective algebras. In \S\ref{SimulRes}, we construct a map $\uppi\colon\scrR^G \rightarrow \Delta,$ where $\Delta$ is an affine space defined in  \eqref{def:Delta}. The following is our second main result, where $\upvartheta$ is a \emph{particular} choice of stability condition explained in \S\ref{ModuliofDeformed}. 

\begin{theorem}[{\ref{thm: main2}}]\label{thm: main2 intro}For any cyclic group $\frac{1}{r}(1,a)$, the diagram
\[
\begin{tikzpicture}
\node (A) at (0,0) {$\Rep(\mathbb{C}Q,\updelta) /\!\!\!\!/_\upvartheta \mathrm{GL}$};
\node (B) at (4,0) {$\scrR^G$};
\node (b) at (4,-2) {$\Delta$};
\draw[->] (A)-- node[above]  {} (B);
\draw[densely dotted,->] (A)-- node[below]  {$\upphi$} (b);
\draw[->] (B)-- node[right]  {$\uppi$} (b);
a\end{tikzpicture}
\]
is a simultaneous resolution of singularities, in the sense that the morphism $\upphi$ is smooth, and $\uppi$ is flat.
\end{theorem}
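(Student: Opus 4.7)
The plan is to follow the blueprint of Crawley-Boevey--Holland \cite{crawley1998noncommutative}, suitably adapted to the reconstruction-algebra setting. The first task is to reinterpret the diagram moduli-theoretically: the horizontal arrow is the affinisation of the GIT quotient, which lands in $\scrR^{G}$ by Theorem~\ref{thm: main1 intro}; the vertical map $\uppi$ is the one constructed in \S\ref{SimulRes}; and $\upphi$ is the composition. The GIT quotient itself will be identified, via a standard variation-of-GIT argument with the chosen $\upvartheta$, with a family of moduli spaces of $\upvartheta$-stable representations of the deformed reconstruction algebra, parametrised by $\Delta$, whose central fibre recovers the minimal resolution of $\mathbb{C}^{2}/H$ via the reconstruction algebra of \cite{TypeA}.

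To prove flatness of $\uppi$, I would exploit that $\Delta$ is affine space and in particular regular, so it is enough to check that the relative dimension of $\scrR^{G}\to\Delta$ is constant. Since $\scrR^{G}\cong\mathbb{C}[\mathsf{z}]/\mathrm{QDet}(\mathsf{z})$ by Theorem \ref{thm: main1 intro}, and the quasideterminantal relations are built from the $2\times 2$ minors of a matrix whose entries are linear in the generators, the ideal $\mathrm{QDet}(\mathsf{z})$ is Cohen--Macaulay of the expected codimension by Hilbert--Burch/Eagon--Northcott. The deformed relations produced by the deformed reconstruction algebra preserve this matrix shape, so comparing leading terms shows that the Hilbert function is constant along $\Delta$, which yields flatness.

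To prove smoothness of $\upphi$, I would use the stability condition $\upvartheta$ chosen in \S\ref{ModuliofDeformed} so that every $\upvartheta$-semistable representation of the deformed reconstruction algebra with dimension vector $\updelta=(1,\dots,1)$ is automatically $\upvartheta$-stable, and $G$ acts freely modulo the diagonal scalars on the stable locus. Then $\Rep(\mathbb{C}Q,\updelta)/\!\!\!\!/_{\upvartheta}\mathrm{GL}$ is a geometric quotient, and smoothness of the moduli reduces to smoothness of the $\upvartheta$-stable locus inside the representation variety of the deformed algebra. This is verified by checking that at each $\upvartheta$-stable representation the Jacobian of the deformed relations has maximal rank, an open condition which holds on the central fibre by the moduli description of the minimal resolution via the reconstruction algebra in \cite{TypeA}.

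The main obstacle is propagating these rank and flatness statements uniformly over all of $\Delta$, especially at the most degenerate fibres where $\upvartheta$-stability might look delicate. The cyclic case offers enough combinatorial control to carry this out: the quiver of the reconstruction algebra and its relations admit a matrix presentation that interacts well with the quasideterminantal ideal, making both the Cohen--Macaulay property of the deformed ideal and the maximal-rank property of the Jacobian at $\upvartheta$-stable points stable under the specialisations parametrised by $\Delta$. Combined with the identification of the central fibre with the minimal resolution of $\mathbb{C}^{2}/H$, this concludes that $\upphi$ is smooth and $\uppi$ is flat, proving that the diagram is a simultaneous resolution.
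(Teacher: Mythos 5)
Your overall architecture (moduli interpretation, stability, fibrewise identification with moduli of the deformed reconstruction algebra) matches the paper's, but the two steps where the actual work happens are not carried out, and one of them cannot work as you have set it up. For smoothness of $\upphi$ you propose to check that the Jacobian of the deformed relations has maximal rank at $\upvartheta$-stable points, note that this is an open condition, and anchor it on the central fibre being the minimal resolution via \cite{TypeA}. Openness only gives you a neighbourhood of the central fibre inside the total space; it says nothing about the fibres over points of $\Delta$ far from $\boldsymbol{\lambda}=0$, which is precisely the content that must be proved. Worse, nothing in your argument uses the \emph{particular} stability $\upvartheta=(-n,1,\ldots,1)$: semistable~$=$~stable and freeness of the action modulo scalars hold for any generic parameter when $\updelta=(1,\ldots,1)$, so your reasoning, if it worked, would prove simultaneous resolution for every generic $\upvartheta$ — and Remark \ref{genericstability} exhibits a generic $\upvartheta_2$ for which the $\upvartheta_2$-quotient is singular already at $\boldsymbol{\lambda}=0$. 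The paper closes exactly this gap by the explicit chart computation of Proposition \ref{Opencover}: for the specific $\upvartheta$ and \emph{every} $\boldsymbol{\lambda}\in\Delta$, each chart $W_t$ of $\Rep(A_{r,a,\boldsymbol{\lambda}},\updelta)/\!\!\!\!/_\upvartheta\mathrm{GL}$ is shown by eliminating variables through the step relations to be $\mathbb{A}^2$; smoothness of $\upphi$ and the fibre-dimension input for flatness (via miracle flatness, \cite[Corollary to 23.1]{Matsumura}, together with regularity of $\Delta$ and of the source) then follow. Some substitute for that uniform-in-$\boldsymbol{\lambda}$, $\upvartheta$-specific computation is unavoidable, and your proposal defers it to ``enough combinatorial control'' without supplying it.

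The flatness argument for $\uppi$ also rests on a misstatement: $\mathrm{QDet}(\mathsf{z})$ is generated by \emph{quasi}minors, e.g.\ $z_{0,0}z_{3,0}-z_{2,0}z_{2,1}z_{2,2}z_{1,1}$, not by $2\times 2$ minors of a matrix with entries linear in the generators, so Hilbert--Burch/Eagon--Northcott do not apply as cited (the numerics of codimension $m$ with $\binom{m+1}{2}$ generators are suggestive, but Cohen--Macaulayness needs a different justification, e.g.\ Hochster's theorem since $\scrR^G$ is a torus invariant ring, or it can be bypassed entirely). Likewise ``comparing leading terms shows the Hilbert function is constant along $\Delta$'' is an assertion of exactly the statement to be proved: the fibres of $\uppi$ are cut out by the linear equations $z_{i,j}-z_{i,j+1}=\lambda_{i,\ast}$ inside $\mathbb{C}[\mathsf{z}]/\mathrm{QDet}(\mathsf{z})$ (Remark \ref{fibreabovelamda}), and showing they all have dimension two is again supplied in the paper by Proposition \ref{Opencover} (every fibre admits the two-dimensional resolution of Corollary \ref{ResCor}). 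So the proposal is not wrong in outline, but both of its load-bearing steps are missing, and the smoothness step as formulated is insensitive to the choice of $\upvartheta$ in a way the theorem cannot be.
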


The smoothness of the fibres is achieved using moduli spaces of the deformed reconstruction algebra $A_{r,a,\boldsymbol{\lambda}}.$ These are introduced in \S\ref{SecRecAlg}, and may be of independent interest. As a final remark, we note in Remark \ref{genericstability} that in general the particular choice of $\upvartheta$ in Theorem \ref{thm: main2} is important, and cannot be generalised to arbitrary generic stability parameters.
\medskip

This paper is organised as follows. Section \ref{Preliminaries} recalls the reconstruction algebra associated to any cyclic subgroup of $\mathrm{GL} (2, \mathbb{C}),$ and recalls quasideterminantal form. Section \ref{RepVar} proves that the invariant representation variety associated to the quiver of this reconstruction algebra is generated by certain cycles $z_{i,j}.$ In Section \ref{ArtinComp} the Artin component is obtained. Section \ref{SimulResolution} introduces the deformed reconstruction algebra, and uses this to achieve simultaneous resolution.

\subsection*{Conventions} Throughout we work over the complex numbers $\mathbb{C}.$ For quivers, $ab$ denotes $a$ followed by $b$.

\subsection*{Acknowledgements} The author would like to thank the GRAID program for sponsoring his PhD studies, and the
ERC Consolidator Grant 101001227 (MMiMMa) for funding his visit to Glasgow. Furthermore,  he would like to thank in a special way his supervisors Michael Wemyss and David Ssevviiri for their guidance and encouragement, and the Department of Mathematics, Makerere University and the School of Mathematics \& Statistics, University of Glasgow for the hospitality and warm environment to pursue his studies.
	
	\section{preliminaries}\label{Preliminaries}
	This section recalls the reconstruction algebra of Type $A$, and introduces some combinatorics that will be used later.
	\subsection{The Reconstruction Algebra of Type  \texorpdfstring{$A$}{A}}\label{subsec:ReconA}
	Consider, for positive integers $\upalpha_{i} \geq 2,$ the following labelled Dynkin diagram of Type $A_n$
	\begin{figure}[H]
		\centering
		\begin{tikzpicture}[scale=1.9] 
		\node (A) at (-1,0) {$\bullet$};
		\node (B) at (0,0) {$\bullet$};
		\node (C) at (1,0) {$\ldots$}; 
		\node (D) at (2,0) {$\bullet$}; 
		\node (E) at (3,0) {$\bullet$}; 
		\node (W) at (-1,0.2) {$-\upalpha_n$};
		\node (X) at (0,0.2) {$-\upalpha_{n-1}$}; 
		\node (Y) at (2,0.2) {$-\upalpha_2$};
		\node (Z) at (3,0.2) {$-\upalpha_1$};
		\path[-,font=\scriptsize,>=angle 90] 
		(A) edge node[above]{} (B)
		(B) edge node[above]{} (C)
		(C) edge node[above]{} (D)
		(D) edge node[above]{} (E); 
		\end{tikzpicture}
	\end{figure}
	\noindent We call the vertex corresponding to $\upalpha_{i}$ the $i^{th}$ vertex. To this picture we associate the double quiver of the extended Dynkin quiver, with the extended vertex called the $0^{th}$ vertex:
	
	\begin{figure}[H]
		\centering
		\begin{tikzpicture}[scale=1.8] 
		\node (A) at (-1,0) {$\bullet$};
		\node (B) at (0,0) {$\bullet$}; 
		\node (C) at (1,0) {$\ldots$}; 
		\node (D) at (2,0) {$\bullet$}; 
		\node (E) at (3,0) {$\bullet$};
		\node (F) at (1,-1) {$\bullet$}; 
		\path[->,font=\scriptsize,>=angle 90,>=stealth] 
		(A) edge node[above]{} (B)
		(B) edge node[above]{} (C)
		(C) edge node[above]{} (D)
		(D) edge node[above]{} (E);
		\path[->,font=\scriptsize,>=angle 90,>=stealth] 
		(B) edge[bend right] node[above]{} (A)
		(C) edge[bend right] node[above]{} (B)
		(D) edge[bend right] node[above]{} (C)
		(E) edge[bend right] node[above]{} (D)
		(F) edge node[above]{} (A)
		(A) edge[bend right] node[above]{} (F)
		(E) edge node[above]{} (F)
		(F) edge[bend right] node[above]{} (E); 
		\end{tikzpicture}
	\end{figure}
	
	\noindent
	Denote this quiver $Q^{\prime}$, and we remark that for $n=1$ $Q^{\prime}$ is
	\begin{figure}[H]
		\centering
		\begin{tikzpicture}[scale=1.9] 
		\node (A) at (-1,0) {$\bullet$};
		\node (B) at (0,0) {$\bullet$};  
		\draw[transform canvas={yshift=1.2ex},->] (A) -- (B);
		\draw[transform canvas={yshift=2.2ex},->] (A) -- (B);
		\draw[transform canvas={yshift=-0.3ex},<-] (A) -- (B);    
		\draw[transform canvas={yshift=-1.3ex},<-] (A) -- (B);
		\end{tikzpicture}
	\end{figure}

\noindent	
In the case that some $\upalpha_{i}>2,$ add an additional $\upalpha_{i}-2$ arrows from the $i^{th}$ vertex to the $0^{th}$ vertex. The resulting quiver is denoted $Q,$ and we label its arrows as follows:

\medskip
\noindent
For $n=1$, we write
	\begin{itemize}
		\item $c_1, c_2$ for the two arrows from 0 to 1 in $Q^\prime$. 
		\item $a_1, a_2$ for the two arrows from 1 to 0 in $Q^\prime$.
		\item $k_1, \ldots , k_{\upalpha_1 - 2}$ for the extra arrows if $\upalpha_1 > 2.$
	\end{itemize}
For $n \geq 2$, we write the
	\begin{itemize}
		\item clockwise arrow in $Q^\prime$ from $i$ to $i-1$ as $c_{ii-1}$ (and $c_{0n}$)
		\item anticlockwise arrow in $Q^\prime$ from $i$ to $i+1$ as $a_{ii+1}$ (and $a_{n0}$)
		\item extra arrows as $k_1, \ldots , k_{\sum(\upalpha_i - 2)}$, reading from right to left (see Examples below).
	\end{itemize}
	The notation $a_{12}$ is read `anticlockwise from 1 to 2'. Below, we furthermore write $A_{ij}$ for the composition of anticlockwise paths $a$ from vertex $i$ to $j,$ and $C_{ij}$ as the composition of clockwise paths. Note that by convention $C_{ii}$ (resp.\ $A_{ii}$) is not an empty path at vertex $i$ but rather the path from $i$ to $i$ round each of the clockwise (resp. anticlockwise) arrows precisely once. Lastly, for convenience write $c_{10}\colonequals k_0$ and $a_{n0} \colonequals  k_{1+\sum(\upalpha_i-2)}$.
	
	\begin{example}\label{rec1}
		For $[\upalpha_1, \upalpha_2, \upalpha_3] = [3,2,2]$, the labelled quiver $Q$ is
		\[
		\begin{tikzpicture} [bend angle=45, looseness=1]
		\node[name=s,regular polygon, regular polygon sides=4, minimum size=3cm] at (0,0) {}; 
		\node (C1) at (s.corner 1) []  {$\scriptstyle 2$};
		\node (C2) at (s.corner 2) []  {$\scriptstyle 3$};
		\node (C3) at (s.corner 3) []  {$\scriptstyle 0$};
		\node (C4) at (s.corner 4) []  {$\scriptstyle 1$};
		\draw[<-] (C4) -- node[left]  {$\scriptstyle c_{21}$} (C1); 
		\draw[<-] (C3) -- node[below]  {$\scriptstyle k_0= c_{10}$} (C4); 
		\draw[<-] (C2) -- node[right]  {$\scriptstyle c_{03}$} (C3); 
		\draw[<-] (C1) -- node[below]  {$\scriptstyle c_{32}$} (C2);
		\draw[<-, transform canvas={yshift=0.8ex}] [green!70!black] (C3) -- node[above]  {$\scriptstyle k_1$} (C4) ;
		\draw [->,bend right] (C1) to node[gap]  {$\scriptstyle a_{23}$} (C2);
		\draw [->,bend right] (C2) to node[gap]  {$\scriptstyle k_2= a_{30}$} (C3);
		\draw [->,bend right] (C3) to node[gap]  {$\scriptstyle a_{01}$} (C4);
		\draw [->,bend right] (C4) to node[gap]  {$\scriptstyle a_{12}$} (C1);
		\end{tikzpicture}\]
	\end{example}

	\begin{example}\label{rec2}
		For $[\upalpha_1, \upalpha_2, \upalpha_3, \upalpha_4, \upalpha_5, \upalpha_6, \upalpha_7] = [2,3,2,4,3,2,2]$, the labelled quiver $Q$ is 
		\[
		\begin{tikzpicture} [bend angle=45, looseness=1]
		\node[name=s,regular polygon, regular polygon sides=8, minimum size=4cm] at (0,0) {}; 
		\node (C1) at (s.corner 1)  {$4$};
		\node (C2) at (s.corner 2)  {$ 5$};
		\node (C3) at (s.corner 3)  {$ 6$};
		\node (C4) at (s.corner 4)  {$ 7$};
		\node (C5) at (s.corner 5)  {$ 0$};
		\node (C6) at (s.corner 6)  {$ 1$};
		\node (C7) at (s.corner 7)  {$ 2$};
		\node (C8) at (s.corner 8)  {$ 3$};    
		\draw[->] (C4) -- node[gap]  {$\scriptstyle c_{76}$} (C3); 
		\draw[->] (C3) -- node[gap]  {$\scriptstyle c_{65}$} (C2);
		\draw[->] (C2) -- node[gap]  {$\scriptstyle c_{54}$} (C1);
		\draw[->] (C1) -- node[gap]  {$\scriptstyle c_{43}$} (C8);
		\draw[->] (C8) -- node[gap]  {$\scriptstyle c_{32}$} (C7);
		\draw[->] (C7) -- node[gap]  {$\scriptstyle c_{21}$} (C6);
		\draw[->] (C6) -- node[gap]  {$\scriptstyle k_0=c_{10}$} (C5);
		\draw[->] (C5) -- node[gap]  {$\scriptstyle c_{07}$} (C4);
		\draw[->, green!70!black] (C2) -- node[gap]  {$\scriptstyle k_4$} (C5);
		\draw[->, green!70!black] (C7) -- node[gap]  {$\scriptstyle k_1$} (C5);
		\draw[->, green!70!black] (C1) -- node[left]  {$\scriptstyle k_3$} (C5);
		\draw[->, transform canvas={xshift=0.8ex}] [green!70!black] (C1) -- node[right]  {$\scriptstyle k_2$} (C5) ;
		\draw [->,bend right] (C1) to node[gap]  {$\scriptstyle a_{45}$} (C2);
		\draw [->,bend right] (C2) to node[gap]  {$\scriptstyle a_{56}$} (C3);
		\draw [->,bend right] (C3) to node[gap]  {$\scriptstyle a_{67}$} (C4);
		\draw [->,bend right] (C4) to node[gap]  {$\scriptstyle k_5=a_{70}$} (C5);
		\draw [->,bend right] (C5) to node[gap]  {$\scriptstyle a_{01}$} (C6);
		\draw [->,bend right] (C6) to node[gap]  {$\scriptstyle a_{12}$} (C7);
		\draw [->,bend right] (C7) to node[gap]  {$\scriptstyle a_{23}$} (C8);
		\draw [->,bend right] (C8) to node[gap]  {$\scriptstyle a_{34}$} (C1);
		\end{tikzpicture} \]
	\end{example}

	\subsection{Cyclic Groups and Combinatorics}
	A reconstruction algebra can be associated to any cyclic subgroup of $\mathrm{GL} (2, \mathbb{C}).$
	\begin{definition}\label{Hirzebruch-Jung}
		For $r,a \in \mathbb{N}$ with $(r,a)=1$ and $r > a$, the group $ \frac{1}{r}(1,a)$ is defined to be
		\[\frac{1}{r}(1,a) \colonequals
		\left<\upzeta\colonequals  \left( \begin{array}{cc}
		\upvarepsilon & 0  \\
		0 & \upvarepsilon^a  \\
		\end{array} \right)\right> \leq \mathrm{GL}(2,\mathbb{C}),\]
		where $\upvarepsilon$ is a primitive $r^{th}$ root of unity. The Hirzebruch--Jung continued fraction expansion of $\frac{r}{a}$ is then denoted
		\[\dfrac{r}{a} = \upalpha_1 - \dfrac{1}{\upalpha_2 - \frac{1}{\upalpha_3 - \frac{1}{(\cdots)}}} \colonequals  [\upalpha_1, \ldots , \upalpha_n]\] with each $\upalpha_i \geq  2.$ For $\frac{r}{r-a}$, the Hirzebruch--Jung expansion is written
\begin{equation}
		\dfrac{r}{r-a} = \upbeta_1 - \dfrac{1}{\upbeta_2 - \frac{1}{\upbeta_3 - \frac{1}{(\cdots)}}} \colonequals  [\upbeta_1, \ldots , \upbeta_m].\label{eqn:betas}
\end{equation}
\end{definition}
   \noindent Write $e$ for the the embedding dimension of the singularity $\mathbb{C}[x,y]^{\frac{1}{r}(1,a)}$.  Then by \cite[\S3]{deformationen} there is an equality $e = m + 2 = 3+\sum(\upalpha_i-2).$ 

To be consistent with \cite[3.5]{TypeA}, consider the $\mathbbold{i}$ and $\mathbbold{j}$-series of \eqref{eqn:betas}, which is defined to be:
\begin{equation}
\begin{array}{clll}
\mathbbold{i}_0=r & \mathbbold{i}_1=r-a & \mathbbold{i}_{t}=\upbeta_{t-1}\mathbbold{i}_{t-1}-\mathbbold{i}_{t-2} &\mbox{ for }\,2\leq t\leq m+1,\\ 
\mathbbold{j}_0=0 & \mathbbold{j}_1=1 &
\mathbbold{j}_{t}=\upbeta_{t-1}\mathbbold{j}_{t-1}-\mathbbold{j}_{t-2}& \mbox{ for }\,2\leq t\leq m+1.
\end{array}\label{eqn:iAndj}
\end{equation}
It is well known that the collection $x^{\mathbbold{i}_t}y^{\mathbbold{j}_t}$ for all $t$ such that $0\leq t\leq m+1$ generate the invariant ring \cite[Satz1]{RieInv}.	
	
	\begin{definition}[{\cite[\S2]{TypeA}}]\label{recgroup}
The reconstruction algebra $A_{r,a}$ associated to the group $\frac{1}{r}(1,a)$ is the path algebra of the quiver $Q$ in \S\textnormal{\ref{subsec:ReconA}} associated to the Hirzebruch--Jung continued fraction expansion of $\frac{r}{a},$ subject to the relations given in Definition \textnormal{\ref{DefRecAlg}} with all $\boldsymbol{\lambda}$s equal to zero.
	\end{definition}

    For our purposes, we shall not require the relations until \S\ref{SimulResolution}, and so we defer introducing them until then.
	
	\begin{example}
		Since $\frac{7}{3}=[3,2,2]$ the quiver of the reconstruction algebra $A_{7,3}$ associated to the group $\frac{1}{7}(1,3)$ is precisely the quiver in Example \ref{rec1}. The relations can be found in Example \ref{rec3}, after setting all $\boldsymbol{\lambda}$s equal to zero.
	\end{example}
	
	\begin{example}
		Since $\frac{165}{104}=[2,3,2,4,3,2,2]$ the quiver of the reconstruction algebra $A_{165,104}$ associated to the group $\frac{1}{165}(1,104)$ is precisely the quiver in Example \ref{rec2}. The relations can be found in Example \ref{rec4}, after setting all $\boldsymbol{\lambda}$s equal to zero.
	\end{example}
	
	\subsection{Quasideterminantal form}\label{QDetform}
	Consider a $2 \times n$ matrix

		\[\begin{pmatrix}
		a_1&  &a_2&  &\hdots& & a_n \\
		b_1& &b_2& &\hdots& & b_n \\ 
		\end{pmatrix}\]
	 together with $n-1$ further entries $W_1, \hdots , W_{n-1}.$ We then write these entries in the middle row, as follows.	
	\[
	X=\begin{pmatrix}
	a_1&  &a_2&  &\hdots& & a_n \\
	&W_1&&W_2&&W_{n-1}&\\
	b_1& &b_2& &\hdots& & b_n \\ 
	\end{pmatrix}\]
	
	Following Riemenschneider \cite[\S5]{RieCyclic}, consider the $2 \times 2$ \textit{quasiminors} of this $2 \times n$ \textit{quasimatrix}, which for all, $i < j$ are defined to be
	\[ a_i \cdot b_j - b_i \left( \prod_{t=i}^{j-1} W_t\right) a_j. \]
	Write $\mathrm{QDet}(X)$ for the set of all $2 \times 2$ \textit{quasiminors} of $X$.
	
	\begin{example}If
		\[X=\begin{pmatrix}
		a_1&  & a_2 & & a_3 \\
		&W_1&&W_2&\\
		b_1& & b_2 & & b_3 \\ 
		\end{pmatrix},\]
		then
		\[ \mathrm{QDet}(X) = \{a_1b_2 - b_1 W_1 a_2,~ a_1b_3- b_1W_1W_2a_3, ~a_2b_3-b_2W_2a_3 \}. \]
	\end{example}
	
	\section{The Representation Variety}\label{RepVar}
	This section considers the invariant representation variety associated to the quiver of any reconstruction algebra of Type $A$, and finds its generators in terms of cycles.
	
	\subsection{Generalities}	
	Consider the dimension vector $\updelta=(1,\hdots,1)$ and the representation variety $\Rep(\mathbb{C}Q,\updelta)$, where $Q$ is an arbitrary (finite) quiver. Here $\Rep(\mathbb{C}Q,\updelta)$ is just an affine space, and we write $\scrR\colonequals \mathbb{C}[\Rep(\mathbb{C}Q,\updelta)]$ for its co-ordinate ring, which we identify with the polynomial ring in the number of arrow variables. The co-ordinate ring carries a natural action of $ G \colonequals \textstyle \prod_{q \in Q_0} \mathbb{C}^{\ast}$ where $Q_0$ denotes the set of vertices of $Q$. The action is via conjugation, namely $ \upmu \in G = \mathbb{C}^\ast \times  \hdots \times \mathbb{C}^\ast$ acts on an arrow $p \in \scrR$ as $\upmu \cdot p = \upmu^{-1}_{t(p)} p \upmu_{h(p)}$.

	Below, we say that arrows $p_1,\hdots,p_n$ are \textit{composable} if $h(p_i) = t(p_{i+1}) ~\text{for all}~ i=1, \hdots , n-1.$

		\begin{lemma}\label{lemcyclesgen}
		If $Q$ is an arbitrary (finite) quiver, then $\mathcal{\scrR}^G$ is generated by cycles in $Q$.
	\end{lemma}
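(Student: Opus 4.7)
The plan is to exploit the fact that, with dimension vector $\updelta=(1,\ldots,1)$, the group $G$ is a torus acting diagonally on the natural coordinates of $\scrR$, and then reduce the statement to an elementary cycle-decomposition fact about finite directed multigraphs.

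Writing $\scrR=\mathbb{C}[x_a:a\in Q_1]$ for the coordinate ring of $\Rep(\mathbb{C}Q,\updelta)$, the prescribed action gives, for an arrow $a$ with $t(a)=i$ and $h(a)=j$, that $\upmu\cdot x_a = \upmu_i^{-1}\upmu_j\, x_a$. Thus each $x_a$ is a $G$-weight vector, with weight $e_{h(a)}-e_{t(a)}\in\mathbb{Z}^{Q_0}$ in the character lattice. Since $G$ is a torus acting diagonally on a generating set of $\scrR$, the invariant subring $\scrR^G$ is spanned as a $\mathbb{C}$-vector space by those monomials whose total weight vanishes. It therefore suffices to show that every such invariant monomial factors as a product of cycle monomials $x_c \colonequals \prod_{a\in c}x_a$, where $c$ runs over closed paths in $Q$; each such $x_c$ is visibly $G$-invariant by the same weight calculation, since a closed path enters and leaves every vertex the same number of times.

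Given a monomial $m=\prod_{a\in Q_1}x_a^{n_a}$, vanishing of its total weight is the condition that at every vertex $v\in Q_0$,
\[
\sum_{a:\,h(a)=v}n_a \;=\; \sum_{a:\,t(a)=v}n_a.
\]
Interpreting $m$ as a multi-subset $H$ of the arrow set $Q_1$, in which $a$ is taken with multiplicity $n_a$, this is precisely the statement that $H$, viewed as a finite directed multigraph, has in-degree equal to out-degree at every vertex.

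The final step is the standard directed cycle-decomposition: any such $H$ partitions into closed directed walks $c_1,\ldots,c_k$ in $Q$. I would argue by induction on $|H|$: if $H$ is nonempty, pick any $a_0\in H$ and greedily extend $a_0$ to a walk by choosing, at each successive vertex, an unused outgoing arrow of $H$. At any intermediate vertex $v\neq t(a_0)$ the balance condition guarantees an available outgoing arrow (the walk has so far consumed one more arrow into $v$ than out of $v$), so the walk can only terminate by closing up at $t(a_0)$; removing the resulting closed walk $c_1$ from $H$ preserves the balance condition at every vertex, reduces $|H|$ strictly, and induction finishes. Writing $m=\prod_i x_{c_i}$ then exhibits $m$ as a product of cycle monomials, as required. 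I do not anticipate a serious obstacle: the only mildly subtle point is that the cycle decomposition of $H$ is highly non-unique, but this non-uniqueness is harmless for the present generation statement — indeed, it is exactly what will yield the quasideterminantal relations studied in \S\ref{ArtinComp}.
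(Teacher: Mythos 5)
Your proposal is correct and follows essentially the same route as the paper: both reduce to the observation that a monomial is $G$-invariant precisely when the arrows it uses are balanced (in-degree equals out-degree) at every vertex — the paper phrases this as cancellation of the $\upmu_{t(p_i)}^{-1}$ and $\upmu_{h(p_j)}$ factors — and then close up walks to produce cycles. Your version is in fact slightly more careful, since your Eulerian decomposition allows an invariant monomial to split into several disjoint closed walks, whereas the paper's write-up reorders the monomial into a single composable cycle, glossing over that case; this does not affect the generation statement.
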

	\begin{proof} Choose a monomial $p = p_1 \hdots p_n \in \scrR,$ where ${p_i}$'s are arrows. We claim that
		$\upmu \cdot p = p ~ \text{for all} ~ \upmu \Leftrightarrow p$ is a cycle. First observe that $\upmu \cdot p 
		= (\upmu_{t(p_1)} \hdots \upmu_{t(p_n)})^{-1} p (\upmu_{h(p_1)} \hdots \upmu_{h(p_n)})$.

\noindent		
($\Leftarrow$) If $p$ is a cycle, in particular it is composable. Thus for all $\upmu \in G,$ 
		\begin{align*}
		\upmu \cdot p &= \upmu_{t(p_1)}^{-1} p_1\upmu_{h(p_1)}\upmu_{t(p_2)}^{-1}p_2\upmu_{h(p_2)} \hdots  \upmu_{t(p_n)}^{-1}p_n \upmu_{h(p_n)}\\
		&= \upmu_{t(p_1)}^{-1} \upmu_{h(p_n)} p_1 p_2  \hdots p_n\\
		&= \upmu_{t(p_1)}^{-1} \upmu_{h(p_n)} p\\
		&= p.  \tag{since $t(p_1) = h(p_n)$}
		\end{align*}
Hence $ p \in \scrR^G$.

\noindent		
($\Rightarrow$)
		Suppose that $p \in \scrR^G$ such that $\upmu \cdot p = p$ for all $\upmu$. Then $\upmu_{h(p_1)}$ must cancel some $\upmu_{t(p_i)}^{-1}$ for some $i$, so $h(p_1) = t(p_i).$ Now consider $\upmu_{h(p_i)}.$ It must cancel $\upmu_{t(p_j)}^{-1}$ for some $j$, so $h(p_i) = t(p_j).$
		Continuing like this, we can assume $p = p_1p_ip_j \hdots p_m$ where $p_1p_ip_j \hdots p_m$ is composable.
		But then $\upmu \cdot p = \upmu_{t(p_1)}^{-1} \cdot p \cdot \upmu_{h(p_m)}$ and so since $\upmu \cdot p = p$, $t(p_1) = h(p_m),$ and $p$ is a cycle.
	\end{proof}
	
	\subsection{Reconstruction Algebras}\label{ReconAlg}
	We now specialise to the case where $Q$ is the quiver of the reconstruction algebra of  $\S$\ref{subsec:ReconA}. By Lemma \ref{lemcyclesgen}, $\scrR^G$ is generated by cycles and this subsection finds a finite generating set.
     
     \medskip
     To set notation, for $h$ such that $0 \leq h \leq 1+ \sum(\upalpha_i - 2)$, write $l_h$ for the number of the vertex associated to the tail of the arrow $k_h.$ In Example \ref{rec2} above, $l_2=4,~ l_3=4$ and $l_4=5$ are associated to the tail of the arrows $k_2,~k_3$ and $k_4$ respectively.
     
     \medskip
    Consider		
\begin{equation} \label{eq1}
\begin{split}
	&\kern 11pt z_{0,0} = C_{00} \\
	\text{for}~ 1 \leq i \leq e-2 & \begin{cases}
	z_{i,0} = C_{0l_i}k_i  \\
	z_{i,j} = c_{l_i-(j-1),l_i-j}a_{l_i-j,l_i-(j-1)}&  ~\forall~   1 \leq j \leq l_i - l_{i-1}\\
	z_{i,l_i-l_{i-1}+1} = A_{0l_{i-1}}k_{i-1} & 
	\end{cases}\\
	&\kern 11pt  z_{e-1,0} = A_{00}
\end{split}
\end{equation}
		
	\begin{prop}\label{gen Z}
		For any group $\frac{1}{r}(1,a),~ \scrR^G$ is generated as a $\mathbb{C}$--algebra by the set 
		\[S=\{z_{0,0},z_{i,j},z_{e-1,0} \mid i \in \left[1,~e-2\right], ~j\in \left[0,~l_i-l_{i-1}+1\right] \}.\]
	\end{prop}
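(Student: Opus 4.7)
My plan is to combine Lemma \ref{lemcyclesgen} with the fact that $\scrR$ is a polynomial ring in the arrow variables. That lemma shows $\scrR^G$ is generated as a $\mathbb{C}$-algebra by cycles in $Q$; since the arrow variables commute in $\scrR$, two cycles agree as elements of $\scrR$ precisely when their underlying arrow multisets agree. Hence the proposition reduces to the purely combinatorial claim that the arrow multiset of every cycle in $Q$ decomposes as a disjoint union of the arrow multisets of elements of $S$.

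First I would prove the auxiliary fact that, in any finite directed multigraph, every flow-conservative multiset of arrows decomposes as a disjoint union of multisets of simple cycles (cycles visiting no vertex twice). This follows from a standard Euler-style induction: pick any arrow in the multiset and extend a walk, observing that flow conservation in the remaining multiset always supplies an outgoing arrow at each intermediate vertex; at the first revisit of a vertex, extract the resulting simple sub-cycle and apply the inductive hypothesis to the remainder. Applying this reduces the proposition to showing that every simple cycle of $Q$ is, as a monomial in $\scrR$, an element of $S$.

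The second step is to enumerate the simple cycles of $Q$ and match each to $S$. Since every extra arrow $k_h$ with $h\in[1,e-3]$ has head at vertex $0$, any simple cycle uses at most one such arrow, and a case analysis identifies the possibilities as: (a) the full clockwise loop $C_{00}=z_{0,0}$; (b) the full anticlockwise loop $A_{00}=z_{e-1,0}$; (c) for each $h\in[1,e-3]$, the cycle $C_{0l_h}k_h=z_{h,0}$; (d) for each $h\in[1,e-3]$, the cycle $A_{0l_h}k_h$, which equals $z_{h+1,\,l_{h+1}-l_h+1}$ as a monomial; and (e) the 2-cycle $c_{p+1,p}a_{p,p+1}$ on each edge $\{p,p+1\}$ of the underlying $(n+1)$-cycle. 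In case (e) the boundary edges $\{0,1\}$ and $\{n,0\}$ correspond to $z_{1,1}$ and $z_{e-2,0}$ respectively (via the conventions $k_0=c_{10}$ and $k_{e-2}=a_{n0}$), while interior edges with $p\in[1,n-1]$ correspond to $z_{i,l_i-p}$ where $i=\min\{h\ge 1:l_h>p\}$.

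The hard part will be the bookkeeping in case (e), particularly when several $l_h$'s coincide (as happens whenever some $\upalpha_i-2>1$). To handle this I would use the non-decreasing sequence $l_0=1\le l_1\le\cdots\le l_{e-2}=n$ with fixed endpoints: it guarantees that the index $i=\min\{h\ge 1:l_h>p\}$ is well-defined in $[1,e-2]$ and satisfies $l_{i-1}\le p\le l_i-1$, so $l_i-p\in[1,\,l_i-l_{i-1}]$ and the matching element $z_{i,l_i-p}$ does lie in $S$. Combined with the simple-cycle decomposition, this would yield the proposition.
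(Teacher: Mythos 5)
Your proposal is correct, but it follows a genuinely different route from the paper. The paper proves Proposition \ref{gen Z} by induction on the length of a cycle: it analyses how a given cycle leaves a vertex (via a $k$ arrow, a clockwise arrow, or an anticlockwise arrow), uses commutativity in $\scrR$ to peel off one generator $z_{i,j}$, and is left with strictly shorter cycles. You instead exploit commutativity once and for all: a cycle is just a flow-conservative arrow multiset, an Euler-style argument decomposes any such multiset into simple cycles, and then you classify the simple cycles of $Q$ and match them with $S$. Your route buys a cleaner structural statement (the generating set $S$ is exactly the set of simple cycles of $Q$, read as monomials), at the price of the classification step; the paper's induction avoids any enumeration but needs the case-by-case rewriting. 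Your classification is essentially right -- the only possibilities are $C_{00}$, $A_{00}$, $C_{0l_h}k_h$, $A_{0l_h}k_h$ and the $2$-cycles $c_{p+1,p}a_{p,p+1}$, since every $k_h$ has head $0$ and a simple path from $0$ on the underlying cycle must run monotonically clockwise or anticlockwise -- but note one indexing slip in exactly the bookkeeping you flagged: the $2$-cycle on the edge $\{0,1\}$ is $c_{10}a_{01}=A_{0l_0}k_0=z_{1,\,l_1-l_0+1}$, not $z_{1,1}$ (these coincide only when $l_1=1$; e.g.\ for $\frac{1}{165}(1,104)$ one has $l_1=2$ and this $2$-cycle is $z_{1,2}$, while $z_{1,1}=c_{21}a_{12}$ sits on the edge $\{1,2\}$). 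The slip does not affect the conclusion, since the cycle in question still lies in $S$ under its correct name, and your formula $z_{i,\,l_i-p}$ with $i=\min\{h\geq 1: l_h>p\}$ handles the interior edges, including repeated values of the $l_h$, correctly.
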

	Before proving the Proposition, we illustrate the set $S$ in the two running examples.
	
	\begin{example}\label{gen 1}
	The quiver of the reconstruction algebra associated to $\frac{1}{7}(1,3)$ is given in Example \ref{rec1}. The set $S$ is
		\[
		\begin{tikzpicture} [bend angle=45, looseness=1]
		\node[name=s,regular polygon, regular polygon sides=4, minimum size=2cm] at (0,0) {$z_{0,0}$}; 
		\coordinate (C1) at (s.corner 1) [] ;
		\coordinate (C2) at (s.corner 2) [] ;
		\coordinate (C3) at (s.corner 3) [] ;
		\coordinate (C4) at (s.corner 4) [] ;
		\coordinate (D3) at ($(C3)+(90:0.05)$) [] ;
		\coordinate (D4) at ($(C4)+(90:0.05)$) [] ;
		\draw[-] (C4) -- (C1); 
		\draw[-] (C3) -- (C4); 
		\draw[-] (C2) -- (C3); 
		\draw[-] (C1) -- (C2);
		\draw[densely dotted,-] [black] (D3) -- (D4) ;
		\draw [densely dotted,-,bend right] (C1) to (C2);
		\draw [densely dotted,-,bend right] (C2) to (C3);
		\draw [densely dotted,-,bend right] (C3) to (C4);
		\draw [densely dotted,-,bend right] (C4) to (C1);
		\end{tikzpicture}\]
		
		\[
		\begin{array}{cc}
		\begin{tikzpicture} [bend angle=45, looseness=1]
		\node[name=s,regular polygon, regular polygon sides=4, minimum size=2cm] at (0,0) {$z_{1,0}$}; 
		\coordinate (C1) at (s.corner 1) [] ;
		\coordinate (C2) at (s.corner 2) [] ;
		\coordinate (C3) at (s.corner 3) [] ;
		\coordinate (C4) at (s.corner 4) [] ;
		\draw[-] (D4) -- (C1); 
		\draw[densely dotted,-] (C3) -- (C4); 
		\draw[-] (C2) -- (D3); 
		\draw[-] (C1) -- (C2);
		\draw[-] [black] (D3) -- (D4) ;
		\draw [densely dotted,-,bend right] (C1) to (C2);
		\draw [densely dotted,-,bend right] (C2) to (C3);
		\draw [densely dotted,-,bend right] (C3) to (C4);
		\draw [densely dotted,-,bend right] (C4) to (C1);
		\end{tikzpicture} 
		&
		\begin{tikzpicture} [bend angle=45, looseness=1]
		\node[name=s,regular polygon, regular polygon sides=4, minimum size=2cm] at (0,0) {$z_{1,1}$}; 
		\coordinate (C1) at (s.corner 1) [] ;
		\coordinate (C2) at (s.corner 2) [] ;
		\coordinate (C3) at (s.corner 3) [] ;
		\coordinate (C4) at (s.corner 4) [] ;
		\coordinate (D3) at ($(C3)+(90:0.05)$) [] ;
		\coordinate (D4) at ($(C4)+(90:0.05)$) [] ;
		\draw[densely dotted,-] (C4) -- (C1); 
		\draw[-] (C3) -- (C4); 
		\draw[densely dotted,-] (D3) -- (D4); 
		\draw[densely dotted,-] (C2) -- (C3); 
		\draw[densely dotted,-] (C1) -- (C2);
		\draw[densely dotted,-] [black] (C3) -- (C4) ;
		\draw [densely dotted,-,bend right] (C1) to (C2);
		\draw [densely dotted,-,bend right] (C2) to (C3);
		\draw [-,bend right] (C3) to (C4);
		\draw [densely dotted,-,bend right] (C4) to (C1);
		\end{tikzpicture} 
		\end{array}
		\]
		
		\[
		\begin{array}{cccc}
		\begin{tikzpicture} [bend angle=45, looseness=1]
		\node[name=s,regular polygon, regular polygon sides=4, minimum size=2cm] at (0,0) {$z_{2,0}$}; 
		\coordinate (C1) at (s.corner 1) [];
		\coordinate (C2) at (s.corner 2) [];
		\coordinate (C3) at (s.corner 3) [];
		\coordinate (C4) at (s.corner 4) [];
		\coordinate (D3) at ($(C3)+(90:0.05)$) [] ;
		\coordinate (D4) at ($(C4)+(90:0.05)$) [] ;
		\draw[densely dotted,-] (C4) -- (C1); 
		\draw[densely dotted,-] (C3) -- (C4); 
		\draw[-] (C2) -- (C3); 
		\draw[densely dotted,-] (C1) -- (C2);
		\draw[densely dotted,-] (D3) -- (D4); 
		\draw [densely dotted,-,bend right] (C1) to (C2);
		\draw [-,bend right] (C2) to (C3);
		\draw [densely dotted,-,bend right] (C3) to (C4);
		\draw [densely dotted,-,bend right] (C4) to (C1);
		\end{tikzpicture} 
		&
		\begin{tikzpicture} [bend angle=45, looseness=1]
		\node[name=s,regular polygon, regular polygon sides=4, minimum size=2cm] at (0,0) {$z_{2,1}$}; 
		\coordinate (C1) at (s.corner 1) [] ;
		\coordinate (C2) at (s.corner 2) [] ;
		\coordinate (C3) at (s.corner 3) [] ;
		\coordinate (C4) at (s.corner 4) [] ;
		\coordinate (D3) at ($(C3)+(90:0.05)$) [] ;
		\coordinate (D4) at ($(C4)+(90:0.05)$) [] ;
		\draw[densely dotted,-] (C4) -- (C1); 
		\draw[densely dotted,-] (C3) -- (C4); 
		\draw[densely dotted,-] (C2) -- (C3); 
		\draw[-] (C1) -- (C2);
		\draw[densely dotted,-] (D3) -- (D4);
		\draw [-,bend right] (C1) to (C2);
		\draw [densely dotted,-,bend right] (C2) to (C3);
		\draw [densely dotted,-,bend right] (C3) to (C4);
		\draw [densely dotted,-,bend right] (C4) to (C1);
		\end{tikzpicture} 
		&
		\begin{tikzpicture} [bend angle=45, looseness=1]
		\node[name=s,regular polygon, regular polygon sides=4, minimum size=2cm] at (0,0) {$z_{2,2}$}; 
		\coordinate (C1) at (s.corner 1) [];
		\coordinate (C2) at (s.corner 2) [];
		\coordinate (C3) at (s.corner 3) [];
		\coordinate (C4) at (s.corner 4) [];
		\coordinate (D3) at ($(C3)+(90:0.05)$) [] ;
		\coordinate (D4) at ($(C4)+(90:0.05)$) [] ;
		\draw[-] (C4) -- (C1); 
		\draw[densely dotted,-] (C3) -- (C4); 
		\draw[densely dotted,-] (C2) -- (C3); 
		\draw[densely dotted,-] (C1) -- (C2);
		\draw[densely dotted,-] (D3) -- (D4);
		\draw [densely dotted,-,bend right] (C1) to (C2);
		\draw [densely dotted,-,bend right] (C2) to (C3);
		\draw [densely dotted,-,bend right] (C3) to (C4);
		\draw [-,bend right] (C4) to (C1);
		\end{tikzpicture} 
		&
		\begin{tikzpicture} [extended line/.style={shorten >=-#1,shorten <=-#1}, extended line/.default=1cm,bend angle=45, looseness=1]
		\node[name=s,regular polygon, regular polygon sides=4, minimum size=2cm] at (0,0) {$z_{2,3}$}; 
		\coordinate (C1) at (s.corner 1) [];
		\coordinate (C2) at (s.corner 2) [];
		\coordinate (C3) at (s.corner 3) [];
		\coordinate (C4) at (s.corner 4) [];
		\coordinate (D3) at ($(C3)+(90:0.05)$) [] ;
		\coordinate (D4) at ($(C4)+(90:0.05)$) [] ;
		\draw[densely dotted,-] (C4) -- (C1); 
		\draw[densely dotted,-] (C3) -- (C4); 
		\draw[densely dotted,-] (C2) -- (C3); 
		\draw[densely dotted,-] (C1) -- (C2);
		\draw[-] (D3) -- (D4);
		\draw [densely dotted,-,bend right] (C1) to (C2);
		\draw [densely dotted,-,bend right] (C2) to (C3);
		\draw [-,bend right] (D3) to (D4);
		\draw [densely dotted,-,bend right] (C4) to (C1);
		\end{tikzpicture} 
		\end{array}
		\]
		
		\[
		\begin{tikzpicture} [bend angle=45, looseness=1]
		\node[name=s,regular polygon, regular polygon sides=4, minimum size=2cm] at (0,0) {$z_{3,0}$}; 
		\coordinate (C1) at (s.corner 1) [];
		\coordinate (C2) at (s.corner 2) [];
		\coordinate (C3) at (s.corner 3) [];
		\coordinate (C4) at (s.corner 4) [];
		\coordinate (D3) at ($(C3)+(90:0.05)$) [] ;
		\coordinate (D4) at ($(C4)+(90:0.05)$) [] ;
		\draw[densely dotted,-] (C4) -- (C1); 
		\draw[densely dotted,-] (C3) -- (C4); 
		\draw[densely dotted,-] (C2) -- (C3); 
		\draw[densely dotted,-] (C1) -- (C2);
		\draw[densely dotted,-] (D3) -- (D4);
		\draw [-,bend right] (C1) to (C2);
		\draw [-,bend right] (C2) to (C3);
		\draw [-,bend right] (C3) to (C4);
		\draw [-,bend right] (C4) to (C1);
		\end{tikzpicture}\]
	\end{example}
	
	\begin{example}\label{gen 2}
		The quiver of the reconstruction algebra associated to $\frac{1}{165}(1,104)$ is given in Example \ref{rec2}. The set $S$ is
		
		\[
		\begin{tikzpicture} [bend angle=45, looseness=1]
		\node[name=s,regular polygon, regular polygon sides=8, minimum size=2.5cm] at (0,0) {$z_{0,0}$}; 
		\coordinate (C1) at (s.corner 1)  {};
		\coordinate (k2) at ($(C1)+(-100:0.05)$)  {};
		\coordinate (k3) at ($(C1)+(-140:0.05)$)  {};
		\coordinate (C2) at (s.corner 2)  {};
		\coordinate (C3) at (s.corner 3)  {};
		\coordinate (C4) at (s.corner 4)  {};
		\coordinate (C5) at (s.corner 5)  {};
		\coordinate (k3a) at ($(s.corner 5)+(80:0.05)$)  {};
		\coordinate (k2a) at ($(s.corner 5)+(40:0.05)$)  {};
		\coordinate (C6) at (s.corner 6)  {};
		\coordinate (C7) at (s.corner 7)  {};
		\coordinate (C8) at (s.corner 8)  {};    
		\draw[-] (C4) -- (C3); 
		\draw[-] (C3) -- (C2);
		\draw[-] (C2) -- (C1);
		\draw[-] (C1) -- (C8);
		\draw[-] (C8) -- (C7);
		\draw[-] (C7) -- (C6);
		\draw[-] (C6) -- (C5);
		\draw[-] (C5) -- (C4);
		\draw[densely dotted,-] (C2) -- (C5);
		\draw[densely dotted,-] (C7) -- (C5);
		\draw[densely dotted,-] (k3) -- (k3a);
		\draw[densely dotted,-] (k2) -- (k2a) ;
		\draw [densely dotted,-,bend right] (C1) to  (C2);
		\draw [densely dotted,-,bend right] (C2) to  (C3);
		\draw [densely dotted,-,bend right] (C3) to  (C4);
		\draw [densely dotted,-,bend right] (C4) to  (C5);
		\draw [densely dotted,-,bend right] (C5) to  (C6);
		\draw [densely dotted,-,bend right] (C6) to  (C7);
		\draw [densely dotted,-,bend right] (C7) to  (C8);
		\draw [densely dotted,-,bend right] (C8) to  (C1);
		\end{tikzpicture} \]
		
		\[
		\begin{array}{ccc}
		\begin{tikzpicture} [bend angle=45, looseness=1]
		\node[name=s,regular polygon, regular polygon sides=8, minimum size=2.5cm] at (0,0) {$z_{1,0}$}; 
		\coordinate (C1) at (s.corner 1)  {};
		\coordinate (k2) at ($(C1)+(-100:0.05)$)  {};
		\coordinate (k3) at ($(C1)+(-140:0.05)$)  {};
		\coordinate (C2) at (s.corner 2)  {};
		\coordinate (C3) at (s.corner 3)  {};
		\coordinate (C4) at (s.corner 4)  {};
		\coordinate (C5) at (s.corner 5)  {};
		\coordinate (k3a) at ($(s.corner 5)+(80:0.05)$)  {};
		\coordinate (k2a) at ($(s.corner 5)+(40:0.05)$)  {};
		\coordinate (C6) at (s.corner 6)  {};
		\coordinate (C7) at (s.corner 7)  {};
		\coordinate (C8) at (s.corner 8)  {};     
		\draw[-] (C4) -- (C3); 
		\draw[-] (C3) -- (C2);
		\draw[-] (C2) -- (C1);
		\draw[-] (C1) -- (C8);
		\draw[-] (C8) -- (C7);
		\draw[densely dotted,-] (C7) -- (C6);
		\draw[densely dotted,-] (C6) -- (C5);
		\draw[-] (C5) -- (C4);
		\draw[densely dotted,-] (C2) -- (C5);
		\draw[-] (C7) -- (C5);
		\draw[densely dotted,-] (k3) -- (k3a);
		\draw[densely dotted,-] (k2) -- (k2a) ;
		\draw [densely dotted,-,bend right] (C1) to  (C2);
		\draw [densely dotted,-,bend right] (C2) to  (C3);
		\draw [densely dotted,-,bend right] (C3) to  (C4);
		\draw [densely dotted,-,bend right] (C4) to  (C5);
		\draw [densely dotted,-,bend right] (C5) to  (C6);
		\draw [densely dotted,-,bend right] (C6) to  (C7);
		\draw [densely dotted,-,bend right] (C7) to  (C8);
		\draw [densely dotted,-,bend right] (C8) to  (C1);
		\end{tikzpicture}
		&
		\begin{tikzpicture} [bend angle=45, looseness=1]
		\node[name=s,regular polygon, regular polygon sides=8, minimum size=2.5cm] at (0,0) {$z_{1,1}$}; 
		\coordinate (C1) at (s.corner 1)  {};
		\coordinate (k2) at ($(C1)+(-100:0.05)$)  {};
		\coordinate (k3) at ($(C1)+(-140:0.05)$)  {};
		\coordinate (C2) at (s.corner 2)  {};
		\coordinate (C3) at (s.corner 3)  {};
		\coordinate (C4) at (s.corner 4)  {};
		\coordinate (C5) at (s.corner 5)  {};
		\coordinate (k3a) at ($(s.corner 5)+(80:0.05)$)  {};
		\coordinate (k2a) at ($(s.corner 5)+(40:0.05)$)  {};
		\coordinate (C6) at (s.corner 6)  {};
		\coordinate (C7) at (s.corner 7)  {};
		\coordinate (C8) at (s.corner 8)  {};    
		\draw[densely dotted,-] (C4) -- (C3); 
		\draw[densely dotted,-] (C3) -- (C2);
		\draw[densely dotted,-] (C2) -- (C1);
		\draw[densely dotted,-] (C1) -- (C8);
		\draw[densely dotted,-] (C8) -- (C7);
		\draw[-] (C7) -- (C6);
		\draw[densely dotted,-] (C6) -- (C5);
		\draw[densely dotted,-] (C5) -- (C4);
		\draw[densely dotted,-] (C2) -- (C5);
		\draw[densely dotted,-] (C7) -- (C5);
		\draw[densely dotted,-] (k3) -- (k3a);
		\draw[densely dotted,-] (k2) -- (k2a) ;
		\draw [densely dotted,-,bend right] (C1) to  (C2);
		\draw [densely dotted,-,bend right] (C2) to  (C3);
		\draw [densely dotted,-,bend right] (C3) to  (C4);
		\draw [densely dotted,-,bend right] (C4) to  (C5);
		\draw [densely dotted,-,bend right] (C5) to  (C6);
		\draw [-,bend right] (C6) to  (C7);
		\draw [densely dotted,-,bend right] (C7) to  (C8);
		\draw [densely dotted,-,bend right] (C8) to  (C1);
		\end{tikzpicture}
		&
		\begin{tikzpicture} [bend angle=45, looseness=1]
		\node[name=s,regular polygon, regular polygon sides=8, minimum size=2.5cm] at (0,0) {$z_{1,2}$}; 
		\coordinate (C1) at (s.corner 1)  {};
		\coordinate (k2) at ($(C1)+(-100:0.05)$)  {};
		\coordinate (k3) at ($(C1)+(-140:0.05)$)  {};
		\coordinate (C2) at (s.corner 2)  {};
		\coordinate (C3) at (s.corner 3)  {};
		\coordinate (C4) at (s.corner 4)  {};
		\coordinate (C5) at (s.corner 5)  {};
		\coordinate (k3a) at ($(s.corner 5)+(80:0.05)$)  {};
		\coordinate (k2a) at ($(s.corner 5)+(40:0.05)$)  {};
		\coordinate (C6) at (s.corner 6)  {};
		\coordinate (C7) at (s.corner 7)  {};
		\coordinate (C8) at (s.corner 8)  {};    
		\draw[densely dotted,-] (C4) -- (C3); 
		\draw[densely dotted,-] (C3) -- (C2);
		\draw[densely dotted,-] (C2) -- (C1);
		\draw[densely dotted,-] (C1) -- (C8);
		\draw[densely dotted,-] (C8) -- (C7);
		\draw[densely dotted,-] (C7) -- (C6);
		\draw[-] (C6) -- (C5);
		\draw[densely dotted,-] (C5) -- (C4);
		\draw[densely dotted,-] (C2) -- (C5);
		\draw[densely dotted,-] (C7) -- (C5);
		\draw[densely dotted,-] (k3) -- (k3a);
		\draw[densely dotted,-] (k2) -- (k2a) ;
		\draw [densely dotted,-,bend right] (C1) to  (C2);
		\draw [densely dotted,-,bend right] (C2) to  (C3);
		\draw [densely dotted,-,bend right] (C3) to  (C4);
		\draw [densely dotted,-,bend right] (C4) to  (C5);
		\draw [-,bend right] (C5) to  (C6);
		\draw [densely dotted,-,bend right] (C6) to  (C7);
		\draw [densely dotted,-,bend right] (C7) to  (C8);
		\draw [densely dotted,-,bend right] (C8) to  (C1);
		\end{tikzpicture}
		\end{array}
		\]
		
		\[
		\begin{array}{cccc}
		\begin{tikzpicture} [extended line/.style={shorten >=-#1,shorten <=-#1}, extended line/.default=1cm,bend angle=45, looseness=1]
		\node[name=s,regular polygon, regular polygon sides=8, minimum size=2.5cm] at (0,0) {$z_{2,0}$}; 
		\coordinate (C1) at (s.corner 1)  {};
		\coordinate (k2) at ($(C1)+(-100:0.05)$)  {};
		\coordinate (k3) at ($(C1)+(-140:0.05)$)  {};
		\coordinate (C2) at (s.corner 2)  {};
		\coordinate (C3) at (s.corner 3)  {};
		\coordinate (C4) at (s.corner 4)  {};
		\coordinate (C5) at (s.corner 5)  {};
		\coordinate (k3a) at ($(s.corner 5)+(80:0.05)$)  {};
		\coordinate (k2a) at ($(s.corner 5)+(40:0.05)$)  {};
		\coordinate (C6) at (s.corner 6)  {};
		\coordinate (C7) at (s.corner 7)  {};
		\coordinate (C8) at (s.corner 8)  {};     
		\draw[-] (C4) -- (C3); 
		\draw[-] (C3) -- (C2);
		\draw[-] (C2) -- (C1);
		\draw[densely dotted,-] (C1) -- (C8);
		\draw[densely dotted,-] (C8) -- (C7);
		\draw[densely dotted,-] (C7) -- (C6);
		\draw[densely dotted,-] (C6) -- (C5);
		\draw[-] (C5) -- (C4);
		\draw[densely dotted,-] (C2) -- (C5);
		\draw[densely dotted,-] (C7) -- (C5);
		\draw[densely dotted,-] (k3) -- (k3a);
		\draw[extended line=0.05cm,-] (k2) -- (k2a) ;
		\draw [densely dotted,-,bend right] (C1) to  (C2);
		\draw [densely dotted,-,bend right] (C2) to  (C3);
		\draw [densely dotted,-,bend right] (C3) to  (C4);
		\draw [densely dotted,-,bend right] (C4) to  (C5);
		\draw [densely dotted,-,bend right] (C5) to  (C6);
		\draw [densely dotted,-,bend right] (C6) to  (C7);
		\draw [densely dotted,-,bend right] (C7) to  (C8);
		\draw [densely dotted,-,bend right] (C8) to  (C1);
		\end{tikzpicture}
		&
		\begin{tikzpicture} [bend angle=45, looseness=1]
		\node[name=s,regular polygon, regular polygon sides=8, minimum size=2.5cm] at (0,0) {$z_{2,1}$}; 
		\coordinate (C1) at (s.corner 1)  {};
		\coordinate (k2) at ($(C1)+(-100:0.05)$)  {};
		\coordinate (k3) at ($(C1)+(-140:0.05)$)  {};
		\coordinate (C2) at (s.corner 2)  {};
		\coordinate (C3) at (s.corner 3)  {};
		\coordinate (C4) at (s.corner 4)  {};
		\coordinate (C5) at (s.corner 5)  {};
		\coordinate (k3a) at ($(s.corner 5)+(80:0.05)$)  {};
		\coordinate (k2a) at ($(s.corner 5)+(40:0.05)$)  {};
		\coordinate (C6) at (s.corner 6)  {};
		\coordinate (C7) at (s.corner 7)  {};
		\coordinate (C8) at (s.corner 8)  {};    
		\draw[densely dotted,-] (C4) -- (C3); 
		\draw[densely dotted,-] (C3) -- (C2);
		\draw[densely dotted,-] (C2) -- (C1);
		\draw[-] (C1) -- (C8);
		\draw[densely dotted,-] (C8) -- (C7);
		\draw[densely dotted,-] (C7) -- (C6);
		\draw[densely dotted,-] (C6) -- (C5);
		\draw[densely dotted,-] (C5) -- (C4);
		\draw[densely dotted,-] (C2) -- (C5);
		\draw[densely dotted,-] (C7) -- (C5);
		\draw[densely dotted,-] (k3) -- (k3a);
		\draw[densely dotted,-] (k2) -- (k2a) ;
		\draw [densely dotted,-,bend right] (C1) to  (C2);
		\draw [densely dotted,-,bend right] (C2) to  (C3);
		\draw [densely dotted,-,bend right] (C3) to  (C4);
		\draw [densely dotted,-,bend right] (C4) to  (C5);
		\draw [densely dotted,-,bend right] (C5) to  (C6);
		\draw [densely dotted,-,bend right] (C6) to  (C7);
		\draw [densely dotted,-,bend right] (C7) to  (C8);
		\draw [-,bend right] (C8) to  (C1);
		\end{tikzpicture}
		&
		\begin{tikzpicture} [bend angle=45, looseness=1]
		\node[name=s,regular polygon, regular polygon sides=8, minimum size=2.5cm] at (0,0) {$z_{2,2}$}; 
		\coordinate (C1) at (s.corner 1)  {};
		\coordinate (k2) at ($(C1)+(-100:0.05)$)  {};
		\coordinate (k3) at ($(C1)+(-140:0.05)$)  {};
		\coordinate (C2) at (s.corner 2)  {};
		\coordinate (C3) at (s.corner 3)  {};
		\coordinate (C4) at (s.corner 4)  {};
		\coordinate (C5) at (s.corner 5)  {};
		\coordinate (k3a) at ($(s.corner 5)+(80:0.05)$)  {};
		\coordinate (k2a) at ($(s.corner 5)+(40:0.05)$)  {};
		\coordinate (C6) at (s.corner 6)  {};
		\coordinate (C7) at (s.corner 7)  {};
		\coordinate (C8) at (s.corner 8)  {};    
		\draw[densely dotted,-] (C4) -- (C3); 
		\draw[densely dotted,-] (C3) -- (C2);
		\draw[densely dotted,-] (C2) -- (C1);
		\draw[densely dotted,-] (C1) -- (C8);
		\draw[-] (C8) -- (C7);
		\draw[densely dotted,-] (C7) -- (C6);
		\draw[densely dotted,-] (C6) -- (C5);
		\draw[densely dotted,-] (C5) -- (C4);
		\draw[densely dotted,-] (C2) -- (C5);
		\draw[densely dotted,-] (C7) -- (C5);
		\draw[densely dotted,-] (k3) -- (k3a);
		\draw[densely dotted,-] (k2) -- (k2a) ;
		\draw [densely dotted,-,bend right] (C1) to  (C2);
		\draw [densely dotted,-,bend right] (C2) to  (C3);
		\draw [densely dotted,-,bend right] (C3) to  (C4);
		\draw [densely dotted,-,bend right] (C4) to  (C5);
		\draw [densely dotted,-,bend right] (C5) to  (C6);
		\draw [densely dotted,-,bend right] (C6) to  (C7);
		\draw [-,bend right] (C7) to  (C8);
		\draw [densely dotted,-,bend right] (C8) to  (C1);
		\end{tikzpicture}
		&
		\begin{tikzpicture} [bend angle=45, looseness=1]
		\node[name=s,regular polygon, regular polygon sides=8, minimum size=2.5cm] at (0,0) {$z_{2,3}$}; 
		\coordinate (C1) at (s.corner 1)  {};
		\coordinate (k2) at ($(C1)+(-100:0.05)$)  {};
		\coordinate (k3) at ($(C1)+(-140:0.05)$)  {};
		\coordinate (C2) at (s.corner 2)  {};
		\coordinate (C3) at (s.corner 3)  {};
		\coordinate (C4) at (s.corner 4)  {};
		\coordinate (C5) at (s.corner 5)  {};
		\coordinate (k3a) at ($(s.corner 5)+(80:0.05)$)  {};
		\coordinate (k2a) at ($(s.corner 5)+(40:0.05)$)  {};
		\coordinate (C6) at (s.corner 6)  {};
		\coordinate (C7) at (s.corner 7)  {};
		\coordinate (C8) at (s.corner 8)  {};    
		\draw[densely dotted,-] (C4) -- (C3); 
		\draw[densely dotted,-] (C3) -- (C2);
		\draw[densely dotted,-] (C2) -- (C1);
		\draw[densely dotted,-] (C1) -- (C8);
		\draw[densely dotted,-] (C8) -- (C7);
		\draw[densely dotted,-] (C7) -- (C6);
		\draw[densely dotted,-] (C6) -- (C5);
		\draw[densely dotted,-] (C5) -- (C4);
		\draw[densely dotted,-] (C2) -- (C5);
		\draw[-] (C7) -- (C5);
		\draw[densely dotted,-] (k3) -- (k3a);
		\draw[densely dotted,-] (k2) -- (k2a) ;
		\draw [densely dotted,-,bend right] (C1) to  (C2);
		\draw [densely dotted,-,bend right] (C2) to  (C3);
		\draw [densely dotted,-,bend right] (C3) to  (C4);
		\draw [densely dotted,-,bend right] (C4) to  (C5);
		\draw [-,bend right] (C5) to  (C6);
		\draw [-,bend right] (C6) to  (C7);
		\draw [densely dotted,-,bend right] (C7) to  (C8);
		\draw [densely dotted,-,bend right] (C8) to  (C1);
		\end{tikzpicture}
		\end{array}
		\]

		\[
		\begin{array}{cc}
		\begin{tikzpicture} [extended line/.style={shorten >=-#1,shorten <=-#1}, extended line/.default=1cm,bend angle=45, looseness=1]
		\node[name=s,regular polygon, regular polygon sides=8, minimum size=2.5cm] at (0,0) {$z_{3,0}$}; 
		\coordinate (C1) at (s.corner 1)  {};
		\coordinate (k2) at ($(C1)+(-100:0.05)$)  {};
		\coordinate (k3) at ($(C1)+(-140:0.05)$)  {};
		\coordinate (C2) at (s.corner 2)  {};
		\coordinate (C3) at (s.corner 3)  {};
		\coordinate (C4) at (s.corner 4)  {};
		\coordinate (C5) at (s.corner 5)  {};
		\coordinate (k3a) at ($(s.corner 5)+(80:0.05)$)  {};
		\coordinate (k2a) at ($(s.corner 5)+(40:0.05)$)  {};
		\coordinate (C6) at (s.corner 6)  {};
		\coordinate (C7) at (s.corner 7)  {};
		\coordinate (C8) at (s.corner 8)  {};     
		\draw[-] (C4) -- (C3); 
		\draw[-] (C3) -- (C2);
		\draw[-] (C2) -- (C1);
		\draw[densely dotted,-] (C1) -- (C8);
		\draw[densely dotted,-] (C8) -- (C7);
		\draw[densely dotted,-] (C7) -- (C6);
		\draw[densely dotted,-] (C6) -- (C5);
		\draw[-] (C5) -- (C4);
		\draw[densely dotted,-] (C2) -- (C5);
		\draw[densely dotted,-] (C7) -- (C5);
		\draw[extended line=0.05cm,-] (k3) -- (k3a);
		\draw[densely dotted,-] (k2) -- (k2a) ;
		\draw [densely dotted,-,bend right] (C1) to  (C2);
		\draw [densely dotted,-,bend right] (C2) to  (C3);
		\draw [densely dotted,-,bend right] (C3) to  (C4);
		\draw [densely dotted,-,bend right] (C4) to  (C5);
		\draw [densely dotted,-,bend right] (C5) to  (C6);
		\draw [densely dotted,-,bend right] (C6) to  (C7);
		\draw [densely dotted,-,bend right] (C7) to  (C8);
		\draw [densely dotted,-,bend right] (C8) to  (C1);
		\end{tikzpicture}
		&
		\begin{tikzpicture} [extended line/.style={shorten >=-#1,shorten <=-#1}, extended line/.default=1cm,bend angle=45, looseness=1]
		\node[name=s,regular polygon, regular polygon sides=8, minimum size=2.5cm] at (0,0) {$z_{3,1}$}; 
		\coordinate (C1) at (s.corner 1)  {};
		\coordinate (k2) at ($(C1)+(-100:0.05)$)  {};
		\coordinate (k3) at ($(C1)+(-140:0.05)$)  {};
		\coordinate (C2) at (s.corner 2)  {};
		\coordinate (C3) at (s.corner 3)  {};
		\coordinate (C4) at (s.corner 4)  {};
		\coordinate (C5) at (s.corner 5)  {};
		\coordinate (k3a) at ($(s.corner 5)+(80:0.05)$)  {};
		\coordinate (k2a) at ($(s.corner 5)+(40:0.05)$)  {};
		\coordinate (C6) at (s.corner 6)  {};
		\coordinate (C7) at (s.corner 7)  {};
		\coordinate (C8) at (s.corner 8)  {};     
		\draw[densely dotted,-] (C4) -- (C3); 
		\draw[densely dotted,-] (C3) -- (C2);
		\draw[densely dotted,-] (C2) -- (C1);
		\draw[densely dotted,-] (C1) -- (C8);
		\draw[densely dotted,-] (C8) -- (C7);
		\draw[densely dotted,-] (C7) -- (C6);
		\draw[densely dotted,-] (C6) -- (C5);
		\draw[densely dotted,-] (C5) -- (C4);
		\draw[densely dotted,-] (C2) -- (C5);
		\draw[densely dotted,-] (C7) -- (C5);
		\draw[densely dotted,-] (k3) -- (k3a);
		\draw[extended line=0.05cm,-] (k2) -- (k2a) ;
		\draw [densely dotted,-,bend right] (C1) to  (C2);
		\draw [densely dotted,-,bend right] (C2) to  (C3);
		\draw [densely dotted,-,bend right] (C3) to  (C4);
		\draw [densely dotted,-,bend right] (C4) to  (C5);
		\draw [-,bend right] (C5) to  (C6);
		\draw [-,bend right] (C6) to  (C7);
		\draw [-,bend right] (C7) to  (C8);
		\draw [-,bend right] (C8) to  (C1);
		\end{tikzpicture}
		\end{array}
		\]
		
		\[
		\begin{array}{ccc}
		\begin{tikzpicture} [bend angle=45, looseness=1]
		\node[name=s,regular polygon, regular polygon sides=8, minimum size=2.5cm] at (0,0) {$z_{4,0}$}; 
		\coordinate (C1) at (s.corner 1)  {};
		\coordinate (k2) at ($(C1)+(-100:0.05)$)  {};
		\coordinate (k3) at ($(C1)+(-140:0.05)$)  {};
		\coordinate (C2) at (s.corner 2)  {};
		\coordinate (C3) at (s.corner 3)  {};
		\coordinate (C4) at (s.corner 4)  {};
		\coordinate (C5) at (s.corner 5)  {};
		\coordinate (k3a) at ($(s.corner 5)+(80:0.05)$)  {};
		\coordinate (k2a) at ($(s.corner 5)+(40:0.05)$)  {};
		\coordinate (C6) at (s.corner 6)  {};
		\coordinate (C7) at (s.corner 7)  {};
		\coordinate (C8) at (s.corner 8)  {};    
		\draw[-] (C4) -- (C3); 
		\draw[-] (C3) -- (C2);
		\draw[densely dotted,-] (C2) -- (C1);
		\draw[densely dotted,-] (C1) -- (C8);
		\draw[densely dotted,-] (C8) -- (C7);
		\draw[densely dotted,-] (C7) -- (C6);
		\draw[densely dotted,-] (C6) -- (C5);
		\draw[-] (C5) -- (C4);
		\draw[-] (C2) -- (C5);
		\draw[densely dotted,-] (C7) -- (C5);
		\draw[densely dotted,-] (k3) -- (k3a);
		\draw[densely dotted,-] (k2) -- (k2a) ;
		\draw [densely dotted,-,bend right] (C1) to  (C2);
		\draw [densely dotted,-,bend right] (C2) to  (C3);
		\draw [densely dotted,-,bend right] (C3) to  (C4);
		\draw [densely dotted,-,bend right] (C4) to  (C5);
		\draw [densely dotted,-,bend right] (C5) to  (C6);
		\draw [densely dotted,-,bend right] (C6) to  (C7);
		\draw [densely dotted,-,bend right] (C7) to  (C8);
		\draw [densely dotted,-,bend right] (C8) to  (C1);
		\end{tikzpicture}
		&
		\begin{tikzpicture} [bend angle=45, looseness=1]
		\node[name=s,regular polygon, regular polygon sides=8, minimum size=2.5cm] at (0,0) {$z_{4,1}$}; 
		\coordinate (C1) at (s.corner 1)  {};
		\coordinate (k2) at ($(C1)+(-100:0.05)$)  {};
		\coordinate (k3) at ($(C1)+(-140:0.05)$)  {};
		\coordinate (C2) at (s.corner 2)  {};
		\coordinate (C3) at (s.corner 3)  {};
		\coordinate (C4) at (s.corner 4)  {};
		\coordinate (C5) at (s.corner 5)  {};
		\coordinate (k3a) at ($(s.corner 5)+(80:0.05)$)  {};
		\coordinate (k2a) at ($(s.corner 5)+(40:0.05)$)  {};
		\coordinate (C6) at (s.corner 6)  {};
		\coordinate (C7) at (s.corner 7)  {};
		\coordinate (C8) at (s.corner 8)  {};    
		\draw[densely dotted,-] (C4) -- (C3); 
		\draw[densely dotted,-] (C3) -- (C2);
		\draw[-] (C2) -- (C1);
		\draw[densely dotted,-] (C1) -- (C8);
		\draw[densely dotted,-] (C8) -- (C7);
		\draw[densely dotted,-] (C7) -- (C6);
		\draw[densely dotted,-] (C6) -- (C5);
		\draw[densely dotted,-] (C5) -- (C4);
		\draw[densely dotted,-] (C2) -- (C5);
		\draw[densely dotted,-] (C7) -- (C5);
		\draw[densely dotted,-] (k3) -- (k3a);
		\draw[densely dotted,-] (k2) -- (k2a) ;
		\draw [-,bend right] (C1) to  (C2);
		\draw [densely dotted,-,bend right] (C2) to  (C3);
		\draw [densely dotted,-,bend right] (C3) to  (C4);
		\draw [densely dotted,-,bend right] (C4) to  (C5);
		\draw [densely dotted,-,bend right] (C5) to  (C6);
		\draw [densely dotted,-,bend right] (C6) to  (C7);
		\draw [densely dotted,-,bend right] (C7) to  (C8);
		\draw [densely dotted,-,bend right] (C8) to  (C1);
		\end{tikzpicture}
		&
		\begin{tikzpicture} [extended line/.style={shorten >=-#1,shorten <=-#1}, extended line/.default=1cm,bend angle=45, looseness=1]
		\node[name=s,regular polygon, regular polygon sides=8, minimum size=2.5cm] at (0,0) {$z_{4,2}$}; 
		\coordinate (C1) at (s.corner 1)  {};
		\coordinate (k2) at ($(C1)+(-100:0.05)$)  {};
		\coordinate (k3) at ($(C1)+(-140:0.05)$)  {};
		\coordinate (C2) at (s.corner 2)  {};
		\coordinate (C3) at (s.corner 3)  {};
		\coordinate (C4) at (s.corner 4)  {};
		\coordinate (C5) at (s.corner 5)  {};
		\coordinate (k3a) at ($(s.corner 5)+(80:0.05)$)  {};
		\coordinate (k2a) at ($(s.corner 5)+(40:0.05)$)  {};
		\coordinate (C6) at (s.corner 6)  {};
		\coordinate (C7) at (s.corner 7)  {};
		\coordinate (C8) at (s.corner 8)  {};     
		\draw[densely dotted,-] (C4) -- (C3); 
		\draw[densely dotted,-] (C3) -- (C2);
		\draw[densely dotted,-] (C2) -- (C1);
		\draw[densely dotted,-] (C1) -- (C8);
		\draw[densely dotted,-] (C8) -- (C7);
		\draw[densely dotted,-] (C7) -- (C6);
		\draw[densely dotted,-] (C6) -- (C5);
		\draw[densely dotted,-] (C5) -- (C4);
		\draw[densely dotted,-] (C2) -- (C5);
		\draw[densely dotted,-] (C7) -- (C5);
		\draw[extended line=0.05cm,-] (k3) -- (k3a);
		\draw[densely dotted,-] (k2) -- (k2a) ;
		\draw [densely dotted,-,bend right] (C1) to  (C2);
		\draw [densely dotted,-,bend right] (C2) to  (C3);
		\draw [densely dotted,-,bend right] (C3) to  (C4);
		\draw [densely dotted,-,bend right] (C4) to  (C5);
		\draw [-,bend right] (C5) to  (C6);
		\draw [-,bend right] (C6) to  (C7);
		\draw [-,bend right] (C7) to  (C8);
		\draw [-,bend right] (C8) to  (C1);
		\end{tikzpicture}
		\end{array}
		\]
		
		\[
		\begin{array}{cccc}
		\begin{tikzpicture} [bend angle=45, looseness=1]
		\node[name=s,regular polygon, regular polygon sides=8, minimum size=2.5cm] at (0,0) {$z_{5,0}$}; 
		\coordinate (C1) at (s.corner 1)  {};
		\coordinate (k2) at ($(C1)+(-100:0.05)$)  {};
		\coordinate (k3) at ($(C1)+(-140:0.05)$)  {};
		\coordinate (C2) at (s.corner 2)  {};
		\coordinate (C3) at (s.corner 3)  {};
		\coordinate (C4) at (s.corner 4)  {};
		\coordinate (C5) at (s.corner 5)  {};
		\coordinate (k3a) at ($(s.corner 5)+(80:0.05)$)  {};
		\coordinate (k2a) at ($(s.corner 5)+(40:0.05)$)  {};
		\coordinate (C6) at (s.corner 6)  {};
		\coordinate (C7) at (s.corner 7)  {};
		\coordinate (C8) at (s.corner 8)  {};    
		\draw[densely dotted,-] (C4) -- (C3); 
		\draw[densely dotted,-] (C3) -- (C2);
		\draw[densely dotted,-] (C2) -- (C1);
		\draw[densely dotted,-] (C1) -- (C8);
		\draw[densely dotted,-] (C8) -- (C7);
		\draw[densely dotted,-] (C7) -- (C6);
		\draw[densely dotted,-] (C6) -- (C5);
		\draw[-] (C5) -- (C4);
		\draw[densely dotted,-] (C2) -- (C5);
		\draw[densely dotted,-] (C7) -- (C5);
		\draw[densely dotted,-] (k3) -- (k3a);
		\draw[densely dotted,-] (k2) -- (k2a) ;
		\draw [densely dotted,-,bend right] (C1) to  (C2);
		\draw [densely dotted,-,bend right] (C2) to  (C3);
		\draw [densely dotted,-,bend right] (C3) to  (C4);
		\draw [-,bend right] (C4) to  (C5);
		\draw [densely dotted,-,bend right] (C5) to  (C6);
		\draw [densely dotted,-,bend right] (C6) to  (C7);
		\draw [densely dotted,-,bend right] (C7) to  (C8);
		\draw [densely dotted,-,bend right] (C8) to  (C1);
		\end{tikzpicture}
		&
		\begin{tikzpicture} [bend angle=45, looseness=1]
		\node[name=s,regular polygon, regular polygon sides=8, minimum size=2.5cm] at (0,0) {$z_{5,1}$}; 
		\coordinate (C1) at (s.corner 1)  {};
		\coordinate (k2) at ($(C1)+(-100:0.05)$)  {};
		\coordinate (k3) at ($(C1)+(-140:0.05)$)  {};
		\coordinate (C2) at (s.corner 2)  {};
		\coordinate (C3) at (s.corner 3)  {};
		\coordinate (C4) at (s.corner 4)  {};
		\coordinate (C5) at (s.corner 5)  {};
		\coordinate (k3a) at ($(s.corner 5)+(80:0.05)$)  {};
		\coordinate (k2a) at ($(s.corner 5)+(40:0.05)$)  {};
		\coordinate (C6) at (s.corner 6)  {};
		\coordinate (C7) at (s.corner 7)  {};
		\coordinate (C8) at (s.corner 8)  {};    
		\draw[-] (C4) -- (C3); 
		\draw[densely dotted,-] (C3) -- (C2);
		\draw[densely dotted,-] (C2) -- (C1);
		\draw[densely dotted,-] (C1) -- (C8);
		\draw[densely dotted,-] (C8) -- (C7);
		\draw[densely dotted,-] (C7) -- (C6);
		\draw[densely dotted,-] (C6) -- (C5);
		\draw[densely dotted,-] (C5) -- (C4);
		\draw[densely dotted,-] (C2) -- (C5);
		\draw[densely dotted,-] (C7) -- (C5);
		\draw[densely dotted,-] (k3) -- (k3a);
		\draw[densely dotted,-] (k2) -- (k2a) ;
		\draw [densely dotted,-,bend right] (C1) to  (C2);
		\draw [densely dotted,-,bend right] (C2) to  (C3);
		\draw [-,bend right] (C3) to  (C4);
		\draw [densely dotted,-,bend right] (C4) to  (C5);
		\draw [densely dotted,-,bend right] (C5) to  (C6);
		\draw [densely dotted,-,bend right] (C6) to  (C7);
		\draw [densely dotted,-,bend right] (C7) to  (C8);
		\draw [densely dotted,-,bend right] (C8) to  (C1);
		\end{tikzpicture}
		&
		\begin{tikzpicture} [bend angle=45, looseness=1]
		\node[name=s,regular polygon, regular polygon sides=8, minimum size=2.5cm] at (0,0) {$z_{5,2}$}; 
		\coordinate (C1) at (s.corner 1)  {};
		\coordinate (k2) at ($(C1)+(-100:0.05)$)  {};
		\coordinate (k3) at ($(C1)+(-140:0.05)$)  {};
		\coordinate (C2) at (s.corner 2)  {};
		\coordinate (C3) at (s.corner 3)  {};
		\coordinate (C4) at (s.corner 4)  {};
		\coordinate (C5) at (s.corner 5)  {};
		\coordinate (k3a) at ($(s.corner 5)+(80:0.05)$)  {};
		\coordinate (k2a) at ($(s.corner 5)+(40:0.05)$)  {};
		\coordinate (C6) at (s.corner 6)  {};
		\coordinate (C7) at (s.corner 7)  {};
		\coordinate (C8) at (s.corner 8)  {};    
		\draw[densely dotted,-] (C4) -- (C3); 
		\draw[-] (C3) -- (C2);
		\draw[densely dotted,-] (C2) -- (C1);
		\draw[densely dotted,-] (C1) -- (C8);
		\draw[densely dotted,-] (C8) -- (C7);
		\draw[densely dotted,-] (C7) -- (C6);
		\draw[densely dotted,-] (C6) -- (C5);
		\draw[densely dotted,-] (C5) -- (C4);
		\draw[densely dotted,-] (C2) -- (C5);
		\draw[densely dotted,-] (C7) -- (C5);
		\draw[densely dotted,-] (k3) -- (k3a);
		\draw[densely dotted,-] (k2) -- (k2a) ;
		\draw [densely dotted,-,bend right] (C1) to  (C2);
		\draw [-,bend right] (C2) to  (C3);
		\draw [densely dotted,-,bend right] (C3) to  (C4);
		\draw [densely dotted,-,bend right] (C4) to  (C5);
		\draw [densely dotted,-,bend right] (C5) to  (C6);
		\draw [densely dotted,-,bend right] (C6) to  (C7);
		\draw [densely dotted,-,bend right] (C7) to  (C8);
		\draw [densely dotted,-,bend right] (C8) to  (C1);
		\end{tikzpicture}
		&
		\begin{tikzpicture} [bend angle=45, looseness=1]
		\node[name=s,regular polygon, regular polygon sides=8, minimum size=2.5cm] at (0,0) {$z_{5,3}$}; 
		\coordinate (C1) at (s.corner 1)  {};
		\coordinate (k2) at ($(C1)+(-100:0.05)$)  {};
		\coordinate (k3) at ($(C1)+(-140:0.05)$)  {};
		\coordinate (C2) at (s.corner 2)  {};
		\coordinate (C3) at (s.corner 3)  {};
		\coordinate (C4) at (s.corner 4)  {};
		\coordinate (C5) at (s.corner 5)  {};
		\coordinate (k3a) at ($(s.corner 5)+(80:0.05)$)  {};
		\coordinate (k2a) at ($(s.corner 5)+(40:0.05)$)  {};
		\coordinate (C6) at (s.corner 6)  {};
		\coordinate (C7) at (s.corner 7)  {};
		\coordinate (C8) at (s.corner 8)  {};    
		\draw[densely dotted,-] (C4) -- (C3); 
		\draw[densely dotted,-] (C3) -- (C2);
		\draw[densely dotted,-] (C2) -- (C1);
		\draw[densely dotted,-] (C1) -- (C8);
		\draw[densely dotted,-] (C8) -- (C7);
		\draw[densely dotted,-] (C7) -- (C6);
		\draw[densely dotted,-] (C6) -- (C5);
		\draw[densely dotted,-] (C5) -- (C4);
		\draw[-] (C2) -- (C5);
		\draw[densely dotted,-] (C7) -- (C5);
		\draw[densely dotted,-] (k3) -- (k3a);
		\draw[densely dotted,-] (k2) -- (k2a) ;
		\draw [-,bend right] (C1) to  (C2);
		\draw [densely dotted,-,bend right] (C2) to  (C3);
		\draw [densely dotted,-,bend right] (C3) to  (C4);
		\draw [densely dotted,-,bend right] (C4) to  (C5);
		\draw [-,bend right] (C5) to  (C6);
		\draw [-,bend right] (C6) to  (C7);
		\draw [-,bend right] (C7) to  (C8);
		\draw [-,bend right] (C8) to  (C1);
		\end{tikzpicture}
		\end{array}
		\]
		
		\[
		\begin{tikzpicture} [bend angle=45, looseness=1]
		\node[name=s,regular polygon, regular polygon sides=8, minimum size=2.5cm] at (0,0) {$z_{6,0}$}; 
		\coordinate (C1) at (s.corner 1)  {};
		\coordinate (k2) at ($(C1)+(-100:0.05)$)  {};
		\coordinate (k3) at ($(C1)+(-140:0.05)$)  {};
		\coordinate (C2) at (s.corner 2)  {};
		\coordinate (C3) at (s.corner 3)  {};
		\coordinate (C4) at (s.corner 4)  {};
		\coordinate (C5) at (s.corner 5)  {};
		\coordinate (k3a) at ($(s.corner 5)+(80:0.05)$)  {};
		\coordinate (k2a) at ($(s.corner 5)+(40:0.05)$)  {};
		\coordinate (C6) at (s.corner 6)  {};
		\coordinate (C7) at (s.corner 7)  {};
		\coordinate (C8) at (s.corner 8)  {};    
		\draw[densely dotted,-] (C4) -- (C3); 
		\draw[densely dotted,-] (C3) -- (C2);
		\draw[densely dotted,-] (C2) -- (C1);
		\draw[densely dotted,-] (C1) -- (C8);
		\draw[densely dotted,-] (C8) -- (C7);
		\draw[densely dotted,-] (C7) -- (C6);
		\draw[densely dotted,-] (C6) -- (C5);
		\draw[densely dotted,-] (C5) -- (C4);
		\draw[densely dotted,-] (C2) -- (C5);
		\draw[densely dotted,-] (C7) -- (C5);
		\draw[densely dotted,-] (k3) -- (k3a);
		\draw[densely dotted,-] (k2) -- (k2a) ;
		\draw [-,bend right] (C1) to  (C2);
		\draw [-,bend right] (C2) to  (C3);
		\draw [-,bend right] (C3) to  (C4);
		\draw [-,bend right] (C4) to  (C5);
		\draw [-,bend right] (C5) to  (C6);
		\draw [-,bend right] (C6) to  (C7);
		\draw [-,bend right] (C7) to  (C8);
		\draw [-,bend right] (C8) to  (C1);
		\end{tikzpicture} \]
		
	\end{example}

		With the above notation set, the proof of Proposition \ref{gen Z} is a relatively simple induction. In what follows, for two paths $p,q \in \mathbb{C}Q,$  we write $p \sim q$ if $p=q$ in $\scrR\colonequals \mathbb{C}[\Rep(\mathbb{C}Q,\updelta)]$ where $\updelta=(1,\hdots,1).$
		\begin{proof}
			By Lemma \ref{lemcyclesgen}, $\scrR^G$ is generated by cycles. Hence consider a cycle $p,$ then the proof is complete if we show that $p$ is generated by elements in $S.$ We induct on the lengths of cycles, since all cycles of length two (the $ac$'s) are already in the generating set.

\smallskip			
For any vertex $v$, consider a non trivial cycle $p$, then it must leave the vertex. According to the quiver, there are three options:

\smallskip
\noindent
\emph{Case 1.} The path $p$ starts with a $k$ arrow ($p = k_tp^{\prime}).$ Since $p$ is a cycle then $p^{\prime}:0 \rightarrow v,$ so we have the following subcases:
		\begin{enumerate}
				\item [(a)] $p^{\prime}$ starts clockwise. If $p^{\prime}$ moves in the clockwise direction indefinitely to vertex $v$ ($p^{\prime}= C_{0v}p^{\prime\prime}$), then $p = k_t C_{0v} p^{\prime\prime} \sim z p^{\prime\prime}$
					and by induction $p \in \left<S\right>$. Hence we can assume that, at some stage $p^{\prime}$ stops travelling clockwise before vertex $v$. At that stage, either we continue anticlockwise so $$p = k_t C_{0w}a_{ww+1} p^{\prime\prime} = k_t C_{0w+1}\underbrace{c_{w+1w}a_{ww+1}}_{z}p^{\prime\prime} \sim z (\text{cycles of length smaller than}~ p),$$
					or we continue via some $k_j$ so 
					$$p = k_t C_{0w} k_j p^{\prime\prime} = k_t \underbrace{C_{0w} k_j}_{z} p^{\prime\prime} \sim z (\text{cycles of length smaller than}~ p).$$ In either case, by induction $p \in \left<S\right>.$				
					\item [(b)] $p^{\prime}$ starts anticlockwise. This subcase is similar to (a), interchanging the clockwise paths and the anticlockwise paths.
				\end{enumerate}
\noindent
\emph{Case 2.} The path $p$ starts with a clockwise arrow, so $p = c_{vv-1}p^{\prime}$. Since $p$ is a cycle then $p^{\prime}\colon v-1 \rightarrow v.$ If $p^{\prime}$ continues clockwise indefinitely, then we can write $p=C_{vv}p^{\prime\prime}$ $\sim$ $z_{0,0}p^{\prime\prime},$ and by induction we are done. Otherwise, at some stage $p^\prime$ stops travelling clockwise and we can write $p=C_{vw}p^\prime$ for some $p^\prime: w \rightarrow v.$ According to the quiver, there are two options.
\begin{enumerate}
	\item [(a)] $p^\prime$ starts with an anticlockwise arrow ($p^\prime = ap^{\prime\prime}$), so then
					\[
					p = C_{vw}a_{ww+1} p^{\prime\prime}= C_{vw+1}\underbrace{(c_{w+1w}a_{ww+1})}_{z} p^{\prime\prime} \sim z (\text{cycles of length smaller than}~ p),\]
					thus by induction, $p \in \left<S \right>.$
					\item [(b)] $p^\prime$ starts with a $k$ arrow ($p^\prime = kp^{\prime\prime}$), and we repeat a similar procedure as in Case 1 applied to $p^{\prime}$.	By induction, $p \in \left< S \right>.$	
				\end{enumerate} 
\noindent
\emph{Case 3.} The path $p$ starts with an anticlockwise arrow. This is very similar to Case 2, after interchanging the clockwise and the anticlockwise arrows. 
\end{proof}

	\section{The Artin Component}\label{ArtinComp}
	This section recovers the Artin component directly from the quiver of the reconstruction algebras, using the representation variety. 
	
	\subsection{QDet and First Properties}\label{Qdet}
	By Riemenschneider duality (see e.g.\ \cite[2.11]{TypeA}), for all $t$ such that $1\leq t\leq m$  there is an equality $\upbeta_t = l_t-l_{t-1}+2$. Set
\[
	s_t= \begin{cases}
	\upbeta_t -1 & ~ \text{if} ~1 \leq t \leq m\\
	0 &  ~   \text{if} ~ t = m+1,
	\end{cases}
\]         
Recalling the notation in $\S$\ref{QDetform}, consider the description of the Artin component of $\frac{1}{r}(1,a)$ due to Riemenschneider \cite{RieCyclic}, which in its \textit{quasideterminantal} form is as follows:
	\vspace*{.25cm}
	\[\scriptsize\begin{pmatrix}
	z_{0,0}& & z_{1,0} && z_{2,0}& \cdots & &z_{m,0}\\
	&z_{1,{s_1}-1}\cdot \ldots \cdot z_{1,1}&& z_{2,{s_2}-1}\cdot \ldots \cdot z_{2,1}&&& z_{m,{s_m}-1}\cdot \ldots \cdot z_{m,1}&\\
	z_{1,s_1} & & z_{2,s_2}&& z_{3,s_3}& \cdots & &z_{m+1,s_{m+1}}
	\end{pmatrix}.\]
	\normalsize
	As in $\S$\ref{QDetform}, $\mathrm{QDet}(\mathsf{z})$ is defined to be the set of all \textit{quasiminors} of the above matrix.
	\begin{example}
		The Artin component of the group $\frac{1}{7}(1,3)$ in Example \ref{gen 1} has quasideterminantal form
		\[ \begin{pmatrix}
		z_{0,0}& & z_{1,0} && z_{2,0}\\
		&&&z_{2,2}z_{2,1} & \\
		z_{1,1} & & z_{2,3}&& z_{3,0} 
		\end{pmatrix},\]
		thus $\mathrm{QDet}(\mathsf{z})$ is the set 
		\[ \{ z_{0,0}z_{2,3} - z_{1,0}z_{1,1},~ z_{0,0}z_{3,0} - z_{2,0}z_{2,1}z_{2,2}z_{1,1},~ z_{1,0}z_{3,0}- z_{2,0}z_{2,1}z_{2,2}z_{2,3}\}.
		\]
	\end{example}
	
	\begin{example}
		The Artin component of the group $\frac{1}{165}(1,104)$ in Example \ref{gen 2} has  quasideterminantal form
		\[ \begin{pmatrix}
		z_{0,0}&&  z_{1,0} && z_{2,0}&z_{3,0} &&z_{4,0}&&z_{5,0}\\
		&z_{1,1}&&z_{2,2}z_{2,1}&&  &z_{4,1}&&z_{5,2}z_{5,1} &\\
		z_{1,2}  && z_{2,3}&& z_{3,1} &z_{4,2}&&z_{5,3}&&z_{6,0}
		\end{pmatrix},\]
		and in this case $\mathrm{QDet}(\mathsf{z})$ consists of 15 relations.
	\end{example}
	For the group $\frac{1}{r}(1,a)$, recall from  $\S$\ref{ReconAlg} that $\scrR^G$ is constructed only from the quiver of the reconstruction algebra. Consider the polynomial ring  $\mathbb{C}[\mathsf{z}]$ which has as variables elements in the set $S$ of Proposition \ref{gen Z}. There is a natural homomorphism
  \[
\mathbb{C}[\mathsf{z}] \xrightarrow{\varphi} \scrR^G, 
\]
defined by sending $z_{i,j}$ to the corresponding cycle in \eqref{eq1}.
\begin{prop}\label{SurjHom}
For any group $\frac{1}{r}(1,a)$, the homomorphism $\varphi \colon \mathbb{C}[\mathsf{z}] \rightarrow \scrR^G $ is surjective, and $\mathrm{QDet}(\mathsf{z})$ belongs to the kernel.
\end{prop}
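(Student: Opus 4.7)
The surjectivity of $\varphi$ is immediate from Proposition~\ref{gen Z}, since the generating set $S$ of $\scrR^G$ consists precisely of the images $\varphi(z_{i,j})$. The substance of the proposition is therefore the containment $\mathrm{QDet}(\mathsf{z}) \subseteq \ker(\varphi)$. The decisive structural observation is that, since $\updelta = (1, \hdots, 1)$, the ring $\scrR = \mathbb{C}[\Rep(\mathbb{C}Q, \updelta)]$ is just the polynomial ring on the arrow variables; hence two monomials in the arrows are equal in $\scrR$ if and only if they use the same arrows with the same multiplicities. So for each quasiminor $a_i b_j - b_i W_i \cdots W_{j-1} a_j \in \mathrm{QDet}(\mathsf{z})$, it suffices to verify that $\varphi(a_i b_j)$ and $\varphi(b_i W_i \cdots W_{j-1} a_j)$ agree as multisets of arrows.

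The plan is to put every entry of the quasimatrix into a uniform form. Riemenschneider duality $\upbeta_t = l_t - l_{t-1} + 2$ gives $s_t = l_t - l_{t-1} + 1$ for $1 \leq t \leq m$, so the third line of \eqref{eq1} yields $b_j = z_{j,s_j} = A_{0, l_{j-1}} k_{j-1}$. The top-row entries are $a_i = z_{i-1,0} = C_{0, l_{i-1}} k_{i-1}$, and each middle $W_t$ unfolds by definition into the product of the radial pairs $c_{h, h-1} a_{h-1, h}$ for $l_{t-1} < h \leq l_t$. Crucially, the conventions $k_0 \colonequals c_{10}$ and $k_{e-2} \colonequals a_{n0}$, together with $l_0 = 1$ and $l_m = n$, allow the same formulas to cover the boundary entries $a_1 = z_{0,0} = C_{00}$ and $b_{m+1} = z_{m+1,0} = A_{00}$, since $C_{00} = C_{0,l_0} k_0$ and $A_{00} = A_{0, l_m} k_m$.

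With these formulas in hand, a direct arrow count shows that both $\varphi(a_i b_j)$ and $\varphi(b_i W_i \cdots W_{j-1} a_j)$ reduce to the product of exactly (i) the clockwise arrows from $0$ to $l_{i-1}$, (ii) the anticlockwise arrows from $0$ to $l_{j-1}$, and (iii) the two arrows $k_{i-1}$ and $k_{j-1}$. On the right-hand side, the $W_t$'s supply precisely the clockwise arrows $c_{h,h-1}$ for $l_{i-1} < h \leq l_{j-1}$ needed to splice the $C_{0,l_{j-1}}$ inside $a_j$ up to $C_{0,l_{i-1}}$, and dually the anticlockwise arrows between $l_{i-1}$ and $l_{j-1}$ extend the $A_{0,l_{i-1}}$ inside $b_i$ out to the required $A_{0,l_{j-1}}$. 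The main practical obstacle is keeping this bookkeeping honest across the ``boundary'' entries, which is exactly what the uniform formulas above achieve without any case split; Examples~\ref{gen 1} and~\ref{gen 2} provide useful sanity checks before the general count. Once the two monomials coincide for every quasiminor, $\mathrm{QDet}(\mathsf{z}) \subseteq \ker(\varphi)$ as required.
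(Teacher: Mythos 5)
Your proposal is correct and takes essentially the same route as the paper: surjectivity is quoted from Proposition \ref{gen Z}, and each quasiminor is shown to die under $\varphi$ by comparing the two monomials inside the commutative ring $\scrR$, with the conventions $k_0=c_{10}$, $k_{e-2}=a_{n0}$ (so $C_{00}=C_{0l_0}k_0$ and $A_{00}=A_{0l_m}k_m$) handling the boundary entries uniformly. Your multiset-of-arrows bookkeeping is just a repackaging of the telescoping rearrangement $C_{0l_{i-1}}k_{i-1}\cdot A_{0l_{j-1}}k_{j-1}=A_{0l_{i-1}}k_{i-1}\bigl(\prod c_{h,h-1}a_{h-1,h}\bigr)C_{0l_{j-1}}k_{j-1}$ carried out explicitly in the paper's proof, so there is nothing further to add.
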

	
	\begin{proof}
		Surjectivity follows from Proposition \ref{gen Z}. We just need to show that the quasiminors are sent to zero.  An arbitrary quasiminor is determined by
		\begin{enumerate}
			\item[$\bullet$] First choosing $z_{i,0},~0 \leq i \leq m-1.$
			\item[$\bullet$] Then choosing $z_{j,s_j},~i+2 \leq j \leq m+1.$
		\end{enumerate}
		With these choices,
		\begin{align*}
		\varphi(z_{i,0}z_{j,s_j}) &= C_{0l_i}k_i \cdot A_{0l_{i+1}}k_{i+1}\\
		&=C_{0l_{i+1}}(c_{l_{i+1}l_{i+1}-1} \hdots c_{l_i+1l_i})k_i \cdot A_{0l_i}(a_{l_il_i+1} \hdots a_{{l_{i+1}-1}l_{i+1}}) k_{i+1} \tag{since elements in $\mathbb{C}[\mathsf{z}]$ commute}\\
		&= A_{0l_i}k_i \cdot (c_{l_{i+1}l_{i+1}-1} a_{{l_{i+1}-1}l_{i+1}} \hdots  c_{l_i+1l_i}a_{l_il_i+1}) \cdot C_{0l_{i+1}}k_{i+1}\\
		&= A_{0l_i}k_i  \left(\displaystyle\prod_{p = 1}^{l_{i+1}-l_i}c_{l_{i+1}-(p-1)l_{i+1}-p } a_{l_{i+1}-pl_{i+1}-(p-1)}\right)C_{0l_{i+1}}k_{i+1}\\
		&= \varphi \left(z_{i+1,s_{i+1}} \left(\displaystyle\prod_{k = i+1}^{j-1}z_{k,1}\hdots z_{k,s_{k}-1 } \right)z_{j-1,0} \right)\\
		&= \varphi \left(z_{i+1,s_{i+1}} \left(\displaystyle\prod_{k = i+1}^{j-1}z_{k,s_{k}-1}\hdots z_{k,1 } \right)z_{j-1,0} \right).
		\end{align*}
		
		\noindent
		This shows that the quasiminor relation
		\[z_{i,0}z_{j,s_j} = z_{i+1,s_{i+1}} \left(\displaystyle\prod_{k = i+1}^{j-1}z_{k,s_{k}-1}\hdots z_{k,1 } \right)z_{j-1,0}\]
		 belongs to the kernel of $\varphi,$ as required.
	\end{proof}

	The remainder of this section will prove that $\mathrm{QDet}(\mathsf{z})$ generates the kernel, but this involves significant work.

	\subsection{Toric ideals generalities}\label{sec: toric gen}
	To compute the kernel of the homomorphism $\varphi$ in Proposition \ref{SurjHom}, we will rely on its description as a toric ideal of $\mathbb{C}[\mathsf{z}],$ as explained in \cite[\S4]{GrobnerSturmfel}. 
		
	Let $\mathcal{A} = \{a_1, a_2, \ldots , a_n\} \subset \mathbb{Z}^d\backslash \{0\},$ where each $a_i$ is considered as a column vector, and consider the Laurent polynomial ring $k[t^{\pm1}]\colonequals  k[t_1, \ldots , t_d, t_1^{-1}, \ldots , t_d^{-1}]$. Set $A = [a_1 a_2 \ldots a_n] \in \mathbb{Z}^{d \times n}$ to be the corresponding $d \times n$ matrix, and consider the map
	\begin{align*}
	 k[x] &\rightarrow k[t^{\pm1}] \\
	x_i &\mapsto t^{a_i}.
	\end{align*}
	
	The toric ideal of $\mathcal{A}$, denoted by $I_\mathcal{A}$, is by definition the kernel. It is possible to compute this using an elimination method, however this is computationally hard in general. A more efficient algorithm to compute $I_\mathcal{A}$ is given in \cite[Algorithm 12.3]{GrobnerSturmfel}, and proceeds as follows:
	
	\begin{enumerate}
		\item [(1)] Find any lattice spanning set $L$ for $\ker(A)_{\mathbb{Z}}$.
		\item [(2)] Consider the ideal $I_L \colonequals  (x^{{u}^+} - x^{{u}^-}\mid u \in L)$, and compute the saturation of $I_L$, $(I_L : (x_1x_2 \hdots x_n)^{\infty})$	with respect to the indeterminates $x_1,\ldots ,x_n.$ Then
		\[(I_L: (x_1 \hdots x_n)^\infty) = I_\mathcal{A}.\]
		Part $(2)$ is the most difficult step.		
	\end{enumerate} 
	
    \subsection{Step 1: Lattice Spanning Set}This section explains how to view the homomorphism $\varphi\colon \mathbb{C}[\mathsf{z}]\rightarrow \scrR^G$ in the toric language of the previous section, then in Corollary \ref{cor: ker Z 1} computes a lattice spanning set for the kernel.

    \begin{example}\label{ex:1311}
    	For the group $\frac{1}{3}(1,1)$,  the homomorphism $\varphi\colon\mathbb{C}[\mathsf{z}]\twoheadrightarrow \scrR^G$ sends $z_{0,0} \mapsto c_1a_1$, $z_{3,0} \mapsto c_2k_1$, and
    	\[
    	\begin{array}{rclcrclcrcl}
    	z_{1,0} &\mapsto& c_1a_2&&z_{2,0} &\mapsto& c_1k_1\\
    	z_{1,1} &\mapsto& c_2a_1&&z_{2,1} &\mapsto& c_2a_2.\\
    	\end{array}
    	\] 
    	Each of $z_{0,0}, z_{1,0}, z_{1,1}, z_{2,0}, z_{2,1},$ and $z_{3,0}$ gives rise to a column vector, where the entries in the column corresponding to $z_{i,j}$ record the exponents of the variables $ k_1, a_2, a_1, c_1$ and $c_2$ that appear in the (monomial) image of $z_{i,j}$ under the map $\varphi.$ Hence

    	\[
	M =
    	\begin{tikzpicture}[baseline=(current bounding box.center)] 
    	\begin{scope}[xscale=0.7,yscale=0.5]
    	\node at (1,-.8) {$k_1$};
    	\node at (1,-1.8) {$a_2$};
    	\node at (1,-2.8) {$a_1$};
    	\node at (1,-3.8) {$c_1$};
    	\node at (1,-4.8) {$c_2$};
    	\end{scope}
    	\end{tikzpicture}
    	\left(
    	\begin{tikzpicture}[baseline=(current bounding box.center)] 
    	\begin{scope}[xscale=0.7,yscale=0.5]
    	\foreach \x in {1,...,6}
    	\foreach \y in {1,...,5}
    	{
    		\ifnum\x<\y
    		\node (my-\y-\x) at (\x,-\y) {$\phantom 0$};
    		\else
    		\node (my-\y-\x) at (\x,-\y) {$\phantom 0$};
    		\fi
    	}
    	\node (my-1-1) at (1,-1) {$1$};
    	\node (my-1-2) at (1,-2) {$0$};
    	\node (my-1-3) at (1,-3) {$0$};
    	\node (my-1-4) at (1,-4) {$0$};
    	\node (my-1-5) at (1,-5) {$1$};
    	\node (my-2-1) at (2,-1) {$0$};
    	\node (my-2-2) at (2,-2) {$1$};
    	\node (my-2-3) at (2,-3) {$0$};
    	\node (my-2-4) at (2,-4) {$0$};
    	\node (my-2-5) at (2,-5) {$1$};
    	\node (my-3-1) at (3,-1) {$0$};
    	\node (my-3-2) at (3,-2) {$0$};
    	\node (my-3-3) at (3,-3) {$1$};
    	\node (my-3-4) at (3,-4) {$0$};
    	\node (my-3-5) at (3,-5) {$1$};
    	\node (my-4-1) at (4,-1) {$0$};
    	\node (my-4-2) at (4,-2) {$0$};
    	\node (my-4-3) at (4,-3) {$1$};
    	\node (my-4-4) at (4,-4) {$1$};
    	\node (my-4-5) at (4,-5) {$0$};
    	\node (my-5-1) at (5,-1) {$0$};
    	\node (my-5-2) at (5,-2) {$1$};
    	\node (my-5-3) at (5,-3) {$0$};
    	\node (my-5-4) at (5,-4) {$1$};
    	\node (my-5-5) at (5,-5) {$0$};
    	\node (my-6-1) at (6,-1) {$1$};
    	\node (my-6-2) at (6,-2) {$0$};
    	\node (my-6-3) at (6,-3) {$0$};
    	\node (my-6-4) at (6,-4) {$1$};
    	\node (my-6-5) at (6,-5) {$0$};
    	\end{scope}
    	\draw[rounded corners] ($(my-1-1)+(-0.2,0.2)$) rectangle ($(my-3-3)+(0.2,-0.2)$);
    	\draw[rounded corners] ($(my-4-1)+(-0.2,0.2)$) rectangle ($(my-3-6)+(0.2,-0.2)$);
    	\draw[rounded corners] ($(my-1-4)+(-0.2,0.2)$) rectangle ($(my-3-4)+(0.2,-0.2)$);
    	\draw[rounded corners] ($(my-1-5)+(-0.2,0.2)$) rectangle ($(my-3-5)+(0.2,-0.2)$);
    	\draw[rounded corners] ($(my-4-4)+(-0.2,0.2)$) rectangle ($(my-6-4)+(0.2,-0.2)$);
    	\draw[rounded corners] ($(my-4-5)+(-0.2,0.2)$) rectangle ($(my-6-5)+(0.2,-0.2)$);
    	\end{tikzpicture}
    	\right)
    	\hspace*{-13em}
    	\begin{tikzpicture}[baseline=(current bounding box.center)] 
    	\begin{scope}[xscale=0.7,yscale=0.35]
    	\node at (0.5,-1) {$z_{3,0}$};
    	\node at (1.5,-1) {$z_{2,1}$};
    	\node at (2.5,-1) {$z_{1,1}$};
    	\node at (3.5,-1) {$z_{0,0}$};
    	\node at (4.5,-1) {$z_{1,0}$};
    	\node at (5.5,-1) {$z_{2,0}$};
    	\node at (7.5,-10) {\phantom 0};
    	\end{scope}
    	\end{tikzpicture}\kern 3pt.
    	\]
    	
    	The kernel of the map $\varphi$ is by construction, the toric ideal of the matrix $M.$
    \end{example}
    
    \begin{notation}\label{not: order1}In the case $\frac{1}{r}(1,1),$ in a similar way to Example \ref{ex:1311} each $z_{i,j}$ gets mapped under $\varphi$ to a monomial in the arrows, and thus we can build a matrix $M$ where the columns record the exponents. To do this requires us to fix an order on the columns and rows, which we do now. Consider the following diagram.
    	\[
    	\begin{tikzpicture}[xscale=0.8,yscale=0.6]
    	\node (A) at (0,0) {$z_{0,0}$};
    	\node (B) at (2,0) {$z_{1,0}$};
    	\node (C) at (4,0) {$\hdots$};
    	\node (D) at (6,0) {$z_{m-1,0}$};
    	\node (E) at (8,0) {$z_{m,0}$};
    	\node (a) at (0,-1) {$z_{1,1}$};
    	\node (b) at (2,-1) {$z_{2,1}$};
    	\node (c) at (4,-1) {$\hdots$};
    	\node (d) at (6,-1) {$z_{m,1}$};
    	\node (e) at (8,-1) {$z_{m+1,0}$};
    	\draw[green!50!black,rounded corners,->] (8,-1.2) -- (0,-1.2) -- (0,0.2) -- (8,0.2);
    	\end{tikzpicture}
    	\]
    	Following the arrow, we label the columns $1, \ldots, 2r$ of the matrix $M$ by 
    	\[z_{m+1,0}, ~z_{m,1}, \ldots,~z_{1,1},~z_{0,0}, \ldots, ~z_{m-1,0}, ~z_{m,0}. \] 
    	and the rows of $M$ by $k_\ell,\hdots,k_1,a_2,a_1,c_1,c_2$.  With this ordering,
    	\[
    	\begin{tikzpicture}[>=stealth,baseline=(current bounding box.center)] 
    	\begin{scope}[xscale=0.35,yscale=0.35]
    	\foreach \x in {1,...,12}
    	\foreach \y in {1,...,7}
    	{
    		\ifnum\x<\y
    		\node (my-\y-\x) at (\x,-\y) {$\phantom 0$};
    		\else
    		\node (my-\y-\x) at (\x,-\y) {$\phantom 0$};
    		\fi
    	}
    	\end{scope}
    	
    	\draw[<->] ($(my-1-1)+(-0.14,0.21)$) -- node[above] {$\scriptstyle 2r$} ($(my-1-12)+(0.14,0.21)$);
    	\draw[<->] ($(my-1-1)+(-0.21,0.14)$) -- node[left] {$ M= ~ \scriptstyle r$} ($(my-6-1)+(-0.21,-0.14)$);
    	\draw[rounded corners] ($(my-1-1)+(-0.14,0.14)$) rectangle ($(my-6-6)+(0.14,-0.14)$);
    	\node at ($(my-3-3)+(0.21,-0.14)$) {$\Id_r$};
    	\draw[rounded corners] ($(my-1-7)+(-0.14,0.14)$) rectangle ($(my-6-12)+(0.14,-0.14)$);
    	\node at ($(my-3-9)+(0.21,-0.14)$) {$\Id^*_r$};
    	
    	
    	\node at ($(my-6-1)+(0,-0.3)$) {$\scriptstyle 0$};
    	\node at ($(my-6-1)+(0,-0.6)$) {$\scriptstyle 1$};
    	\draw[densely dotted] ($(my-6-1)+(0,-0.45)$) -- ($(my-6-6)+(0,-0.45)$);
    	\node at ($(my-6-6)+(0,-0.3)$) {$\scriptstyle 0$};
    	\node at ($(my-6-6)+(0,-0.6)$) {$\scriptstyle 1$};
    	\node at ($(my-6-7)+(0,-0.3)$) {$\scriptstyle 1$};
    	\node at ($(my-6-7)+(0,-0.6)$) {$\scriptstyle 0$};
    	\draw[densely dotted] ($(my-6-7)+(0,-0.45)$) -- ($(my-6-12)+(0,-0.45)$);
    	\node at ($(my-6-12)+(0,-0.3)$) {$\scriptstyle 1$};
    	\node at ($(my-6-12)+(0,-0.6)$) {$\scriptstyle 0$};
    	
    	\end{tikzpicture}
    	\]
    \end{notation} \noindent
    where $\Id_r$ is the $r \times r$ identity matrix, and $\Id^*_r$ is the anti-diagonal identity matrix.
    
    \medskip
    For the general case $\frac{1}{r}(1,a)$ with $a\neq 1$, there is also a matrix $M$ whose entries are similarly the powers of the variables.  To describe the matrix $M$ requires us to set notation, which we do now.
    
    \begin{notation}\label{not: order2}
    	Consider the following diagram.
    	\[
    	\left(
    	\begin{tikzpicture}[baseline=-\the\dimexpr\fontdimen22\textfont2\relax]
    	\matrix (m)[matrix of math nodes]
    	{
    		z_{0,0}& & z_{1,0} && z_{2,0}& \phantom{\cdots}&z_{m-1,0}& &z_{m,0}\\
    		&z_{1,{s_1}-1}\ldots z_{1,1}&& z_{2,{s_2}-1}\ldots z_{2,1}&&\cdots&& z_{m,{s_m}-1} \ldots z_{m,1}&\\
    		z_{1,s_1} & & z_{2,s_2}&& z_{3,s_3}& &z_{m,s_m}& &z_{m+1,s_{m+1}}\\
    	};
    	\draw[->,rounded corners] (m-3-7.south) --node[below] {$ \scriptstyle 1$} (m-3-1.south west)--(m-3-1.north west) -- (m-2-8.south east) -- (m-1-9.south west)-- (m-1-9.south);
    	\draw[->,rounded corners] (m-1-1.north west) --node[above] {$ \scriptstyle 2$} (m-1-1.north east);
    	\draw[->,rounded corners] (m-3-9.south west) --node[below] {$ \scriptstyle 3$} (m-3-9.south east);
    	\draw[->,rounded corners] (m-1-3.north) --node[above] {$ \scriptstyle 4$} (m-1-7.north);
    	\end{tikzpicture}
    	\right)
    	\]
    	Following the above arrows as numbered, we label the columns $1, \ldots , \ell+n+1$ of $M$ by
    	\[
    	z_{m,s_m},\ldots, z_{1,s_1}, z_{1,{s_1}-1}, \ldots , z_{1,1}, z_{2,{s_2}-1}, \ldots , z_{2,1},\ldots, z_{m,{s_m}-1}, \ldots , z_{m,1}, z_{m,0} 
    	\]
    	Then column $\ell+n+2$ will be labelled $z_{0,0}$, column $\ell+n+3$ labelled $z_{m+1,s_{m+1}}$, and columns $\ell+n+4 , \ldots , 2\ell+n+3$ will be labelled $ z_{1,0},\ldots, z_{m-1,0}$.
    	
    	We next specify the labelling of the rows of $M$. The first $\ell$ rows will be $k_\ell, \ldots , k_1$, then the next rows labelled $a_{01}, \ldots, a_{n0},$ then the next rows  $c_{on}, \ldots, c_{10}$. 
    \end{notation}
    
    \begin{example}\label{map1}
    	For the group $\frac{1}{7}(1,2)$,  the homomorphism $\varphi\colon\mathbb{C}[\mathsf{z}]\twoheadrightarrow \scrR^G$ sends $z_{0,0} \mapsto c_{02}c_{21}c_{10}$, $z_{4,0} \mapsto a_{01}a_{12}a_{20}$, and
    	\[
    	\begin{array}{rclcrclcrcl}
    	z_{1,0} &\mapsto& c_{02}c_{21}k_1&&z_{2,0} &\mapsto& c_{02}c_{21}k_2&&z_{3,0} &\mapsto& c_{02}a_{20}\\
    	z_{1,1} &\mapsto& c_{10}a_{01}&&z_{2,1} &\mapsto& a_{01}k_1&&z_{3,1} &\mapsto& c_{21}a_{12}\\  
    	&&&&&&&&z_{3,2} &\mapsto& a_{01}k_2.\\ 
    	\end{array}
    	\] 
    	The exponents of $z_{0,0}, z_{1,0}, z_{1,1}, z_{2,0}, z_{2,1}, z_{3,0}, z_{3,1}, z_{3,2}$ and $z_{4,0}$ lead to the column vectors	with each entry of any corresponding column vector being the power of the variables $ k_2, k_1, a_{01}, a_{12}, a_{20}, c_{02}, c_{21}$ and $c_{10}$	respectively. Hence
       	
    	\[M =
    	\begin{tikzpicture}[baseline=(current bounding box.center)] 
    	\begin{scope}[xscale=0.7,yscale=0.5]
    	\node at (1,-.8) {$k_2$};
    	\node at (1,-1.8) {$k_1$};
    	\node at (1,-2.8) {$a_{01}$};
    	\node at (1,-3.8) {$a_{12}$};
    	\node at (1,-4.8) {$a_{20}$};
    	\node at (1,-5.8) {$c_{02}$};
    	\node at (1,-6.8) {$c_{21}$};
    	\node at (1,-7.8) {$c_{10}$};
    	\end{scope}
    	\end{tikzpicture}
    	\left(
    	\begin{tikzpicture}[baseline=(current bounding box.center)] 
    	\begin{scope}[xscale=0.7,yscale=0.5]
    	\foreach \x in {1,...,9}
    	\foreach \y in {1,...,8}
    	{
    		\ifnum\x<\y
    		\node (my-\y-\x) at (\x,-\y) {$\phantom 0$};
    		\else
    		\node (my-\y-\x) at (\x,-\y) {$\phantom 0$};
    		\fi
    	}
    	\node (my-1-1) at (1,-1) {$1$};
    	\node (my-1-2) at (1,-2) {$0$};
    	\node (my-1-3) at (1,-3) {$1$};
    	\node (my-1-4) at (1,-4) {$0$};
    	\node (my-1-5) at (1,-5) {$0$};
    	\node (my-1-6) at (1,-6) {$0$};
    	\node (my-1-7) at (1,-7) {$0$};
    	\node (my-1-8) at (1,-8) {$0$};
    	\node (my-2-1) at (2,-1) {$0$};
    	\node (my-2-2) at (2,-2) {$1$};
    	\node (my-2-3) at (2,-3) {$1$};
    	\node (my-2-4) at (2,-4) {$0$};
    	\node (my-2-5) at (2,-5) {$0$};
    	\node (my-2-6) at (2,-6) {$0$};
    	\node (my-2-7) at (2,-7) {$0$};
    	\node (my-2-8) at (2,-8) {$0$};
    	\node (my-3-1) at (3,-1) {$0$};
    	\node (my-3-2) at (3,-2) {$0$};
    	\node (my-3-3) at (3,-3) {$1$};
    	\node (my-3-4) at (3,-4) {$0$};
    	\node (my-3-5) at (3,-5) {$0$};
    	\node (my-3-6) at (3,-6) {$0$};
    	\node (my-3-7) at (3,-7) {$0$};
    	\node (my-3-8) at (3,-8) {$1$};
    	\node (my-4-1) at (4,-1) {$0$};
    	\node (my-4-2) at (4,-2) {$0$};
    	\node (my-4-3) at (4,-3) {$0$};
    	\node (my-4-4) at (4,-4) {$1$};
    	\node (my-4-5) at (4,-5) {$0$};
    	\node (my-4-6) at (4,-6) {$0$};
    	\node (my-4-7) at (4,-7) {$1$};
    	\node (my-4-8) at (4,-8) {$0$};
    	\node (my-5-1) at (5,-1) {$0$};
    	\node (my-5-2) at (5,-2) {$0$};
    	\node (my-5-3) at (5,-3) {$0$};
    	\node (my-5-4) at (5,-4) {$0$};
    	\node (my-5-5) at (5,-5) {$1$};
    	\node (my-5-6) at (5,-6) {$1$};
    	\node (my-5-7) at (5,-7) {$0$};
    	\node (my-5-8) at (5,-8) {$0$};
    	\node (my-6-1) at (6,-1) {$0$};
    	\node (my-6-2) at (6,-2) {$0$};
    	\node (my-6-3) at (6,-3) {$0$};
    	\node (my-6-4) at (6,-4) {$0$};
    	\node (my-6-5) at (6,-5) {$0$};
    	\node (my-6-6) at (6,-6) {$1$};
    	\node (my-6-7) at (6,-7) {$1$};
    	\node (my-6-8) at (6,-8) {$1$};
    	\node (my-7-1) at (7,-1) {$0$};
    	\node (my-7-2) at (7,-2) {$0$};
    	\node (my-7-3) at (7,-3) {$1$};
    	\node (my-7-4) at (7,-4) {$1$};
    	\node (my-7-5) at (7,-5) {$1$};
    	\node (my-7-6) at (7,-6) {$0$};
    	\node (my-7-7) at (7,-7) {$0$};
    	\node (my-7-8) at (7,-8) {$0$};
    	\node (my-8-1) at (8,-1) {$0$};
    	\node (my-8-2) at (8,-2) {$1$};
    	\node (my-8-3) at (8,-3) {$0$};
    	\node (my-8-4) at (8,-4) {$0$};
    	\node (my-8-5) at (8,-5) {$0$};
    	\node (my-8-6) at (8,-6) {$1$};
    	\node (my-8-7) at (8,-7) {$1$};
    	\node (my-8-8) at (8,-8) {$0$};
    	\node (my-9-1) at (9,-1) {$1$};
    	\node (my-9-2) at (9,-2) {$0$};
    	\node (my-9-3) at (9,-3) {$0$};
    	\node (my-9-4) at (9,-4) {$0$};
    	\node (my-9-5) at (9,-5) {$0$};
    	\node (my-9-6) at (9,-6) {$1$};
    	\node (my-9-7) at (9,-7) {$1$};
    	\node (my-9-8) at (9,-8) {$0$};
    	\end{scope}
    	\draw[rounded corners] ($(my-1-1)+(-0.2,0.2)$) rectangle ($(my-2-2)+(0.2,-0.2)$);
    	\draw[rounded corners] ($(my-1-3)+(-0.2,0.2)$) rectangle ($(my-2-5)+(0.2,-0.2)$);
    	\draw[rounded corners] ($(my-1-6)+(-0.2,0.2)$) rectangle ($(my-2-8)+(0.2,-0.2)$);
    	\draw[rounded corners] ($(my-3-1)+(-0.2,0.2)$) rectangle ($(my-5-2)+(0.2,-0.2)$);
    	\draw[rounded corners] ($(my-3-3)+(-0.2,0.2)$) rectangle ($(my-5-5)+(0.2,-0.2)$);
    	\draw[rounded corners] ($(my-3-6)+(-0.2,0.2)$) rectangle ($(my-5-8)+(0.2,-0.2)$);
    	\draw[rounded corners] ($(my-8-1)+(-0.2,0.2)$) rectangle ($(my-9-2)+(0.2,-0.2)$);
    	\draw[rounded corners] ($(my-8-3)+(-0.2,0.2)$) rectangle ($(my-9-5)+(0.2,-0.2)$);
    	\draw[rounded corners] ($(my-8-6)+(-0.2,0.2)$) rectangle ($(my-9-8)+(0.2,-0.2)$);
    	\end{tikzpicture}
    	\right)
    	\hspace*{-19em}
    	\begin{tikzpicture}[baseline=(current bounding box.center)] 
    	\begin{scope}[xscale=0.7,yscale=0.5]
    	\node at (0.5,-1) {$z_{3,2}$};
    	\node at (1.5,-1) {$z_{2,1}$};
    	\node at (2.5,-1) {$z_{1,1}$};
    	\node at (3.5,-1) {$z_{3,1}$};
    	\node at (4.5,-1) {$z_{3,0}$};
    	\node at (5.5,-1) {$z_{0,0}$};
    	\node at (6.5,-1) {$z_{4,0}$};
    	\node at (7.5,-1) {$z_{1,0}$};
    	\node at (8.5,-1) {$z_{2,0}$};
    	\node at (7.5,-10) {\phantom 0};
    	\end{scope}
    	\end{tikzpicture}\kern 3pt.
    	\]
    \end{example}
    
    With the above ordering of the columns and  rows, we now give a general block decomposition of $M$ which explains the boxes in Example \ref{map1}. 
    \begin{lemma}
    	With the ordering on rows and columns as in Notation~\textnormal{\ref{not: order2}},
    	\[
    	\begin{tikzpicture}[>=stealth,baseline=(current bounding box.center)] 
    	\begin{scope}[xscale=0.5,yscale=0.5]
    	\foreach \x in {1,...,14}
    	\foreach \y in {1,...,11}
    	{
    		\ifnum\x<\y
    		\node (my-\y-\x) at (\x,-\y) {$\phantom 0$};
    		\else
    		\node (my-\y-\x) at (\x,-\y) {$\phantom 0$};
    		\fi
    	}
    	\end{scope}
    	\draw[<->] ($(my-1-1)+(-0.2,0.3)$) -- node[above] {$\scriptstyle \ell$} ($(my-1-3)+(0.2,0.3)$);
    	\draw[<->] ($(my-1-1)+(-0.3,0.2)$) -- node[left] {$\scriptstyle \ell$} ($(my-3-1)+(-0.3,-0.2)$);
    	\draw[rounded corners] ($(my-1-1)+(-0.2,0.2)$) rectangle ($(my-3-3)+(0.2,-0.2)$);
    	\node at (my-2-2) {$\Id_\ell$};
    	\draw[rounded corners] ($(my-4-1)+(-0.2,0.2)$) rectangle ($(my-7-3)+(0.2,-0.2)$);
    	\draw[<->] ($(my-4-1)+(-0.3,0.2)$) -- node[left] {$M= ~~\scriptstyle n+1$} ($(my-7-1)+(-0.3,-0.2)$);
    	\node at ($(my-5-2)+(0,-0.25)$) {$A$};
    	\draw[rounded corners] ($(my-8-1)+(-0.2,0.2)$) rectangle ($(my-11-3)+(0.2,-0.2)$);
    	\draw[<->] ($(my-8-1)+(-0.3,0.2)$) -- node[left] {$ \scriptstyle n+1$} ($(my-11-1)+(-0.3,-0.2)$);
    	\node at ($(my-9-2)+(0,-0.25)$) {$0$};
    	\draw[<->] ($(my-1-4)+(-0.2,0.3)$) -- node[above] {$ \scriptstyle n+1$} ($(my-1-7)+(0.2,0.3)$);
    	\draw[rounded corners] ($(my-1-4)+(-0.2,0.2)$) rectangle ($(my-3-7)+(0.2,-0.2)$);
    	\node at ($(my-2-5)+(0.25,0)$) {$0$};
    	\draw[rounded corners] ($(my-4-4)+(-0.2,0.2)$) rectangle ($(my-7-7)+(0.2,-0.2)$);
    	\node at ($(my-5-5)+(0.25,-0.25)$) {$\Id_{n+1}$};
    	\draw[rounded corners] ($(my-8-4)+(-0.2,0.2)$) rectangle ($(my-11-7)+(0.2,-0.2)$);
    	\node at ($(my-9-5)+(0.25,-0.25)$) {$\Id^*_{n+1}$};
    	\node at ($(my-1-8)+(0.1,0)$) {$\scriptstyle 0$};
    	\node at ($(my-1-8)+(-0.1,0)$) {$\scriptstyle 0$};
    	\draw[densely dotted] (my-1-8) -- (my-3-8);
    	\node at ($(my-3-8)+(0.1,0)$) {$\scriptstyle 0$};
    	\node at ($(my-3-8)+(-0.1,0)$) {$\scriptstyle 0$};
    	\node at ($(my-4-8)+(0.1,0)$) {$\scriptstyle 1$};
    	\node at ($(my-4-8)+(-0.1,0)$) {$\scriptstyle 0$};
    	\draw[densely dotted] (my-4-8) -- (my-7-8);
    	\node at ($(my-7-8)+(0.1,0)$) {$\scriptstyle 1$};
    	\node at ($(my-7-8)+(-0.1,0)$) {$\scriptstyle 0$};
    	\node at ($(my-8-8)+(0.1,0)$) {$\scriptstyle 0$};
    	\node at ($(my-8-8)+(-0.1,0)$) {$\scriptstyle 1$};
    	\draw[densely dotted] (my-8-8) -- (my-11-8);
    	\node at ($(my-11-8)+(0.1,0)$) {$\scriptstyle 0$};
    	\node at ($(my-11-8)+(-0.1,0)$) {$\scriptstyle 1$};
    	\draw[<->] ($(my-1-9)+(-0.2,0.3)$) -- node[above] {$\scriptstyle \ell$} ($(my-1-11)+(0.2,0.3)$);
    	\draw[rounded corners] ($(my-1-9)+(-0.2,0.2)$) rectangle ($(my-3-11)+(0.2,-0.2)$);
    	\node at (my-2-10) {$\Id^*_\ell$};
    	\draw[rounded corners] ($(my-4-9)+(-0.2,0.2)$) rectangle ($(my-7-11)+(0.2,-0.2)$);
    	\node at ($(my-5-10)+(0,-0.25)$) {$0$};
    	\draw[rounded corners] ($(my-8-9)+(-0.2,0.2)$) rectangle ($(my-11-11)+(0.2,-0.2)$);
    	\node at ($(my-9-10)+(0,-0.25)$) {$B$};
    	\end{tikzpicture}
    	\]
    \end{lemma}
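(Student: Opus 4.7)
The plan is to verify the block structure by a direct column-by-column computation. By construction, $\varphi(z_{i,j})$ is a single monomial in the arrow variables (an explicit path given by \eqref{eq1}), so each column of $M$ is read off from that monomial. I would partition the $2\ell + n + 3$ columns of $M$ according to the four ``regions'' in Notation \ref{not: order2} — the first $\ell$ columns $z_{m,s_m},\ldots,z_{2,s_2}$; the middle $n+1$ columns $z_{1,s_1},z_{1,s_1-1},\ldots,z_{m,1},z_{m,0}$; the two singleton columns $z_{0,0}$ and $z_{m+1,s_{m+1}}$; and the final $\ell$ columns $z_{1,0},\ldots,z_{m-1,0}$ — then check each against the claimed block in turn.

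For the first group, $z_{i,s_i} = A_{0,l_{i-1}}k_{i-1}$ contains exactly one extra $k$-arrow (namely $k_{i-1}$), no $c$-arrows, and the $l_{i-1}$ anticlockwise arrows $a_{01},\ldots,a_{l_{i-1}-1,l_{i-1}}$. With columns ordered $z_{m,s_m},\ldots,z_{2,s_2}$ (so $k$-index decreases $\ell,\ell-1,\ldots,1$) and rows ordered $k_\ell,\ldots,k_1$, the $k$-portion is $\Id_\ell$; the $c$-portion is zero; the $a$-portion gives the matrix $A$. The two singleton columns are immediate from $z_{0,0}=C_{00}=c_{0n}\cdots c_{10}$ and $z_{m+1,s_{m+1}}=A_{00}=a_{01}\cdots a_{n0}$: the former is all ones in the $c$-rows (zero elsewhere), and the latter is all ones in the $a$-rows (zero elsewhere), matching the two middle singleton-column patterns in the lemma. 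For the last group, $z_{i,0}=C_{0,l_i}k_i$ contributes the single $k$-arrow $k_i$ in the top block; with columns $z_{1,0},\ldots,z_{m-1,0}$ left-to-right and rows $k_\ell,\ldots,k_1$ top-to-bottom this assembles to $\Id^*_\ell$, the $a$-rows vanish, and the $c$-rows produce $B$.

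The main obstacle is verifying the middle $n+1$ columns give exactly $\Id_{n+1}$ in the $a$-rows and $\Id^*_{n+1}$ in the $c$-rows. Each such column is a $2$-cycle across one edge of the outer polygon: $z_{1,s_1}=a_{01}c_{10}$, then $z_{i,j}=c_{l_i-(j-1),l_i-j}a_{l_i-j,l_i-(j-1)}$ for $1\leq j\leq l_i-l_{i-1}$, and finally $z_{m,0}=c_{0n}a_{n0}$. I would verify by a small induction (or by unpacking telescopes $l_{i-1}\to l_i$) that, read in the column order of Notation \ref{not: order2}, the $a$-arrow appearing is successively $a_{01},a_{12},\ldots,a_{n-1,n},a_{n0}$, which with the row ordering $a_{01},\ldots,a_{n0}$ is $\Id_{n+1}$. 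The simultaneous $c$-arrow is $c_{10},c_{21},\ldots,c_{n,n-1},c_{0n}$, so with the reversed row ordering $c_{0n},\ldots,c_{10}$ this is $\Id^*_{n+1}$. The $k$-rows are zero for these columns since none of $z_{1,s_1}$, $z_{i,j}$ with $1\leq j\leq l_i-l_{i-1}$, or $z_{m,0}$ contains an extra $k$-arrow (using $k_0=c_{10}$ and $k_{\ell+1}=a_{n0}$ lie in the $c$- and $a$-rows respectively). Assembling all four cases yields the full block decomposition.
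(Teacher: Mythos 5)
Your proposal is correct and is essentially the paper's argument: both are direct verifications that read off each entry of $M$ from the explicit monomials $\varphi(z_{i,j})$ in \eqref{eq1} together with the orderings of Notation \ref{not: order2}, the only difference being that you organise the check by column groups while the paper organises it by row blocks (the $k$-, $a$- and $c$-rows, tracing the same path through the quasimatrix to get $\Id_{n+1}$ and $\Id^*_{n+1}$).
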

    \begin{proof}
    	In the $z$'s, the $k$'s only appear as illustrated below.
    	\[
    	\begin{tikzpicture}[baseline=-\the\dimexpr\fontdimen22\textfont2\relax]
    	\matrix (m)[matrix of math nodes]
    	{
    		z_{0,0}& & z_{1,0} && z_{2,0}& \phantom{\cdots}&z_{m-1,0}& &z_{m,0}\\
    		&z_{1,{s_1}-1}\ldots z_{1,1}&& z_{2,{s_2}-1}\ldots z_{2,1}&&\cdots&& z_{m,{s_m}-1} \ldots z_{m,1}&\\
    		z_{1,s_1} & & z_{2,s_2}&& z_{3,s_3}& &z_{m,s_m}& &z_{m+1,s_{m+1}}\\
    	};
    	\draw[rounded corners] (m-3-7.south east)--node[below]{$\scriptstyle k_\ell$}(m-3-7.south west)--($(m-3-7.south west)+(0,1.55)$)--($(m-3-7.south east)+(0,1.55)$)--cycle;;
    	\draw[rounded corners] (m-3-5.south east)--node[below]{$\scriptstyle k_2$}(m-3-5.south west)--($(m-3-5.south west)+(0,1.55)$)--($(m-3-5.south east)+(0,1.55)$)--cycle;;
    	\draw[rounded corners] (m-3-3.south east)--node[below]{$\scriptstyle k_1$}(m-3-3.south west)--($(m-3-3.south west)+(0,1.55)$)--($(m-3-3.south east)+(0,1.55)$)--cycle;;
    	\end{tikzpicture}
    	\]
    	Due to the ordering on rows and columns, the first $l$ rows of $M$ are thus
    	\[
    	\begin{tikzpicture}[>=stealth,baseline=(current bounding box.center)] 
    	\begin{scope}[xscale=0.5,yscale=0.5]
    	\foreach \x in {1,...,14}
    	\foreach \y in {1,...,3}
    	{
    		\ifnum\x<\y
    		\node (my-\y-\x) at (\x,-\y) {$\phantom 0$};
    		\else
    		\node (my-\y-\x) at (\x,-\y) {$\phantom 0$};
    		\fi
    	}
    	\end{scope}
    	\draw[<->] ($(my-1-1)+(-0.2,0.3)$) -- node[above] {$\scriptstyle \ell$} ($(my-1-3)+(0.2,0.3)$);
    	\draw[<->] ($(my-1-1)+(-0.3,0.2)$) -- node[left] {$\scriptstyle \ell$} ($(my-3-1)+(-0.3,-0.2)$);
    	\draw[rounded corners] ($(my-1-1)+(-0.2,0.2)$) rectangle ($(my-3-3)+(0.2,-0.2)$);
    	\node at (my-2-2) {$\Id_\ell$};

    	\draw[<->] ($(my-1-4)+(-0.2,0.3)$) -- node[above] {$ \scriptstyle n+1$} ($(my-1-7)+(0.2,0.3)$);
    	\draw[rounded corners] ($(my-1-4)+(-0.2,0.2)$) rectangle ($(my-3-7)+(0.2,-0.2)$);
    	\node at ($(my-2-5)+(0.25,0)$) {$0$};
    	
    	\node at ($(my-1-8)+(0.1,0)$) {$\scriptstyle 0$};
    	\node at ($(my-1-8)+(-0.1,0)$) {$\scriptstyle 0$};
    	\draw[densely dotted] (my-1-8) -- (my-3-8);
    	\node at ($(my-3-8)+(0.1,0)$) {$\scriptstyle 0$};
    	\node at ($(my-3-8)+(-0.1,0)$) {$\scriptstyle 0$};
    	
    	\draw[<->] ($(my-1-9)+(-0.2,0.3)$) -- node[above] {$\scriptstyle \ell$} ($(my-1-11)+(0.2,0.3)$);
    	\draw[rounded corners] ($(my-1-9)+(-0.2,0.2)$) rectangle ($(my-3-11)+(0.2,-0.2)$);
    	\node at (my-2-10) {$\Id^*_\ell$};

    	\end{tikzpicture}
    	\]
    	where $\Id_\ell$ is the $\ell \times \ell$ identity matrix, and $\Id^*_\ell$ is the anti-diagonal identity matrix. Similarly, in the $z$'s, the $a$'s only appear in the following region.
    	\[
    	\begin{tikzpicture}[baseline=-\the\dimexpr\fontdimen22\textfont2\relax]
    	\matrix (m)[matrix of math nodes]
    	{
    		z_{0,0}& & z_{1,0} && z_{2,0}& \phantom{\cdots}&z_{m-1,0}& &z_{m,0}\\
    		&z_{1,{s_1}-1}\ldots z_{1,1}&& z_{2,{s_2}-1}\ldots z_{2,1}&&\cdots&& z_{m,{s_m}-1} \ldots z_{m,1}&\\
    		z_{1,s_1} & & z_{2,s_2}&& z_{3,s_3}& &z_{m,s_m}& &z_{m+1,s_{m+1}}\\
    	};
    	\draw[rounded corners] (m-3-9.south east)--(m-3-1.south west)--(m-3-1.north west)--(m-2-2.north west)--(m-2-8.north east)--($(m-3-9.south west)+(0,1.55)$)--($(m-3-9.south east)+(0,1.55)$)--cycle;;
    	\draw[green!50!black,->,rounded corners] (m-3-1.north)--(m-2-2.west) -- (m-2-8.east) -- (m-1-9);
    	\end{tikzpicture}
    	\]
    	Furthermore, along the green arrow, out of all the $a$'s, the first $z_{1,s_1}$ contains only $a_{01}$, the second entry $z_{1,1}$ contains only $a_{12}$, etc until the last entry $z_{m,0}$ on the green line, which contains only $a_{n0}$.  It follows that the next $n+1$ rows of $M$ are 
    	\[
    	\begin{tikzpicture}[>=stealth,baseline=(current bounding box.center)] 
    	\begin{scope}[xscale=0.5,yscale=0.5]
    	\foreach \x in {1,...,14}
    	\foreach \y in {3,...,7}
    	{
    		\ifnum\x<\y
    		\node (my-\y-\x) at (\x,-\y) {$\phantom 0$};
    		\else
    		\node (my-\y-\x) at (\x,-\y) {$\phantom 0$};
    		\fi
    	}
    	\end{scope}
    	
    	
    	\draw[<->] ($(my-4-1)+(-0.2,0.3)$) -- node[above] {$\scriptstyle \ell$} ($(my-4-3)+(0.2,0.3)$);

    	\draw[rounded corners] ($(my-4-1)+(-0.2,0.2)$) rectangle ($(my-7-3)+(0.2,-0.2)$);
    	\draw[<->] ($(my-4-1)+(-0.3,0.2)$) -- node[left] {$\scriptstyle n+1$} ($(my-7-1)+(-0.3,-0.2)$);
    	\node at ($(my-5-2)+(0,-0.25)$) {$A$};
    	
    	
    	\draw[<->] ($(my-4-4)+(-0.2,0.3)$) -- node[above] {$ \scriptstyle n+1$} ($(my-4-7)+(0.2,0.3)$);
    	\draw[rounded corners] ($(my-4-4)+(-0.2,0.2)$) rectangle ($(my-7-7)+(0.2,-0.2)$);
    	\node at ($(my-5-5)+(0.25,-0.25)$) {$\Id_{n+1}$};
    	
    	;
    	\node at ($(my-4-8)+(0.1,0)$) {$\scriptstyle 1$};
    	\node at ($(my-4-8)+(-0.1,0)$) {$\scriptstyle 0$};
    	\draw[densely dotted] (my-4-8) -- (my-7-8);
    	\node at ($(my-7-8)+(0.1,0)$) {$\scriptstyle 1$};
    	\node at ($(my-7-8)+(-0.1,0)$) {$\scriptstyle 0$};
    	
    	\draw[rounded corners] ($(my-4-9)+(-0.2,0.2)$) rectangle ($(my-7-11)+(0.2,-0.2)$);
    	\node at ($(my-5-10)+(0,-0.25)$) {$0$};
    	\draw[<->] ($(my-4-9)+(-0.2,0.3)$) -- node[above] {$\scriptstyle \ell$} ($(my-4-11)+(0.2,0.3)$);
    	\end{tikzpicture}
    	\]
    	for some matrix $A$ (see Remark~\ref{rem:A and B} below).

    	Lastly, in a very similar way the only place the $c$'s exist in the $z$'s are in the following region
    	\[
    	\begin{tikzpicture}[baseline=-\the\dimexpr\fontdimen22\textfont2\relax]
    	\matrix (m)[matrix of math nodes]
    	{
    		z_{0,0}& & z_{1,0} && z_{2,0}& \phantom{\cdots}&z_{m-1,0}& &z_{m,0}\\
    		&z_{1,{s_1}-1}\ldots z_{1,1}&& z_{2,{s_2}-1}\ldots z_{2,1}&&\cdots&& z_{m,{s_m}-1} \ldots z_{m,1}&\\
    		z_{1,s_1} & & z_{2,s_2}&& z_{3,s_3}& &z_{m,s_m}& &z_{m+1,s_{m+1}}\\
    	};
    	\draw[rounded corners] (m-1-1.north west)--(m-1-9.north east)--(m-1-9.south east) -- (m-2-8.south east) -- ($(m-3-1.south)+(0.9,0.5)$)-- (m-3-1.south east)--($(m-1-1.north west)+(0,-1.475)$)--cycle;;
    	\draw[green!50!black,<-,rounded corners] (m-3-1.north)--(m-2-2.west) -- (m-2-8.east) -- (m-1-9);
    	\end{tikzpicture}
    	\]
    	where again following the green line, among all the $c$'s, the first $z_{m,0}$ contains only $c_{0n}$, the second entry contains only $c_{nn-1}$, etc until the last entry $z_{1,s_1}$ on the green line, which contains only $c_{10}$.  It follows that the next $n+1$ rows of $M$ are 
    	\[
    	\begin{tikzpicture}[>=stealth,baseline=(current bounding box.center)] 
    	\begin{scope}[xscale=0.5,yscale=0.5]
    	\foreach \x in {1,...,14}
    	\foreach \y in {7,...,11}
    	{
    		\ifnum\x<\y
    		\node (my-\y-\x) at (\x,-\y) {$\phantom 0$};
    		\else
    		\node (my-\y-\x) at (\x,-\y) {$\phantom 0$};
    		\fi
    	}
    	\end{scope}
    	\draw[rounded corners] ($(my-8-1)+(-0.2,0.2)$) rectangle ($(my-11-3)+(0.2,-0.2)$);
    	\draw[<->] ($(my-8-1)+(-0.3,0.2)$) -- node[left] {$ \scriptstyle n+1$} ($(my-11-1)+(-0.3,-0.2)$);
    	\node at ($(my-9-2)+(0,-0.25)$) {$0$};
    	\draw[<->] ($(my-8-1)+(-0.2,0.3)$) -- node[above] {$\scriptstyle \ell$} ($(my-8-3)+(0.2,0.3)$);
    	\draw[<->] ($(my-8-4)+(-0.2,0.3)$) -- node[above] {$ \scriptstyle n+1$} ($(my-8-7)+(0.2,0.3)$);
    	\draw[rounded corners] ($(my-8-4)+(-0.2,0.2)$) rectangle ($(my-11-7)+(0.2,-0.2)$);
    	\node at ($(my-9-5)+(0.25,-0.25)$) {$\Id^*_{n+1}$};
    	\node at ($(my-8-8)+(0.1,0)$) {$\scriptstyle 0$};
    	\node at ($(my-8-8)+(-0.1,0)$) {$\scriptstyle 1$};
    	\draw[densely dotted] (my-8-8) -- (my-11-8);
    	\node at ($(my-11-8)+(0.1,0)$) {$\scriptstyle 0$};
    	\node at ($(my-11-8)+(-0.1,0)$) {$\scriptstyle 1$};
    	\draw[<->] ($(my-8-9)+(-0.2,0.3)$) -- node[above] {$\scriptstyle \ell$} ($(my-8-11)+(0.2,0.3)$);
    	\draw[rounded corners] ($(my-8-9)+(-0.2,0.2)$) rectangle ($(my-11-11)+(0.2,-0.2)$);
    	\node at ($(my-9-10)+(0,-0.25)$) {$B$};
    	\end{tikzpicture}
    	\]
    	for some matrix $B$.  The result follows.
    \end{proof}
    
    \begin{remark}\label{rem:A and B}
    	Although not required, it is possible to explicitly describe both the matrices $A$ and $B$.  For $A$, there are $\upbeta_1 - 1,~\upbeta_2 - 2,~\upbeta_3 - 2 \ldots, \upbeta_{m-1}-2, \upbeta_m -1$ rows each containing $\{1,1, \ldots,1, 1, 1\},~\{1,1, \ldots,1, 1, 0\}, ~\{1,1, \ldots,1, 0, 0\}, \ldots, \{0,0, \ldots,0, 0, 0\}$	respectively.
    	
    	For $B$, there are $\upbeta_m - 1,~\upbeta_{m-1} - 2,~\upbeta_{m-2} - 2, \ldots, \upbeta_2- 2, \upbeta_1-1$ rows, each containing
    	$\{1,1, \ldots,1, 1, 1\},~\{1,1, \ldots,1, 1, 0\}, ~\{1,1, \ldots,1, 0, 0\}, \ldots, \{0,0, \ldots,0, 0, 0\}$ respectively.
    \end{remark}

    Now consider the $2+ \sum \upbeta_i = 2\ell + n + 3$ square matrix
    
    \[
    \begin{tikzpicture}[>=stealth,baseline=(current bounding box.center)] 
    \begin{scope}[xscale=0.35,yscale=0.35]
    \foreach \x in {1,...,11}
    \foreach \y in {1,...,11}
    {
    	\ifnum\x<\y
    	\node (my-\y-\x) at (\x,-\y) {$\phantom 0$};
    	\else
    	\node (my-\y-\x) at (\x,-\y) {$\phantom 0$};
    	\fi
    }
    \end{scope}
    \draw[<->] ($(my-1-1)+(-0.14,0.21)$) -- node[above] {$\scriptstyle \ell+n+2$} ($(my-1-8)+(0.14,0.21)$);
    \draw[<->] ($(my-1-1)+(-0.21,0.14)$) -- node[left] {$ Q= ~~ \scriptstyle\ell+n+2$} ($(my-8-1)+(-0.21,-0.14)$);
    \draw[rounded corners] ($(my-1-1)+(-0.14,0.14)$) rectangle ($(my-8-8)+(0.14,-0.14)$);
    \node at (my-5-5) {$\Id_{\ell+n+2}$};
    \draw[rounded corners] ($(my-9-1)+(-0.14,0.14)$) rectangle ($(my-11-8)+(0.14,-0.14)$);
    \draw[<->] ($(my-9-1)+(-0.21,0.14)$) -- node[left] {$\scriptstyle \ell+1$} ($(my-11-1)+(-0.21,-0.14)$);
    \node at ($(my-9-4)+(0.1,-0.4)$) {$0$};
    \draw[<->] ($(my-1-9)+(-0.14,0.21)$) -- node[above] {$\scriptstyle \ell+1$} ($(my-1-11)+(0.14,0.21)$);
    \draw[<->] ($(my-1-11)+(0.21,0.14)$) -- node[right] {$\scriptstyle 2\ell+n+3$} ($(my-11-11)+(0.21,-0.14)$);
    \draw[rounded corners] ($(my-1-9)+(-0.14,0.14)$) rectangle ($(my-11-11)+(0.14,-0.14)$);
    \node at (my-6-10) {$K$};
    \end{tikzpicture}
    \]
    where $K$ is the $(2 \ell + n + 3) \times (\ell + 1)$ matrix				\[
    \begin{tikzpicture}[>=stealth,baseline=(current bounding box.center)] 
    \begin{scope}[xscale=0.5,yscale=0.5]
    \foreach \x in {1,...,6}
    \foreach \y in {1,...,12}
    {
    	\ifnum\x<\y
    	\node (my-\y-\x) at (\x,-\y) {$\phantom 0$};
    	\else
    	\node (my-\y-\x) at (\x,-\y) {$\phantom 0$};
    	\fi
    }
    \end{scope}
    \draw[<->] ($(my-1-4)+(0.3,0.2)$) -- node[right] {$\scriptstyle \ell$} ($(my-3-4)+(0.3,-0.2)$);
    \draw[draw=none] ($(my-4-1)+(-0.3,0.2)$) -- node[left] {$K=$} ($(my-7-1)+(-0.3,-0.2)$);
    \draw[<->] ($(my-4-4)+(0.3,0.2)$) -- node[right] {$\scriptstyle n+1$} ($(my-7-4)+(0.3,-0.2)$);
    \draw[<->] ($(my-10-4)+(0.3,0.2)$) -- node[right] {$ \scriptstyle \ell$} ($(my-12-4)+(0.3,-0.2)$);
    \node at ($(my-1-1)+(0.1,0)$) {$\scriptstyle 0$};
    \draw[densely dotted] ($(my-1-1)+(0.1,-0.2)$) -- ($(my-3-1)+(0.1,0.2)$);
    \node at ($(my-3-1)+(0.1,0)$) {$\scriptstyle 0$};
    \node at (my-4-1) {$\scriptstyle -1$};
    \draw[densely dotted] ($(my-4-1)+(0.1,-0.2)$) -- ($(my-7-1)+(0.1,0.2)$);
    \node at (my-7-1) {$\scriptstyle -1$};
    \node at ($(my-8-1)+(0.1,0.02)$) {$\scriptstyle 1$};
    \node at ($(my-8-2)+(-0.1,0)$) {$\scriptstyle -1$};
    \draw[densely dotted] ($(my-8-1)+(0.7,0)$) -- (my-8-4);
    \node at (my-8-4) {$\scriptstyle -1$};
    \node at ($(my-9-1)+(0.1,0)$) {$\scriptstyle 1$};
    \node at ($(my-10-1)+(0.1,0)$) {$\scriptstyle 0$};
    \node at (my-9-2) {$\scriptstyle 0$};
    \draw[densely dotted] ($(my-9-1)+(0.7,0)$) -- (my-9-4);
    \node at ($(my-9-4)+(0.1,0)$) {$\scriptstyle 0$};
    \draw[densely dotted] ($(my-10-1)+(0.1,-0.2)$) -- ($(my-12-1)+(0.1,0.2)$);
    \node at ($(my-12-1)+(0.1,0)$) {$\scriptstyle 0$};
    \draw[<->] ($(my-1-2)+(-0.2,0.3)$) -- node[above] {$\scriptstyle \ell$} ($(my-1-4)+(0.2,0.3)$);
    \draw[rounded corners] ($(my-1-2)+(-0.2,0.2)$) rectangle ($(my-3-4)+(0.2,-0.2)$);
    \node at (my-2-3) {$-\Id^*_\ell$};
    \draw[rounded corners] ($(my-4-2)+(-0.2,0.2)$) rectangle ($(my-7-4)+(0.2,-0.2)$);
    \node at ($(my-5-3)+(0,-0.2)$) {$V$};
    \draw[rounded corners] ($(my-10-2)+(-0.2,0.2)$) rectangle ($(my-12-4)+(0.2,-0.2)$);
    \node at (my-11-3) {$\Id_\ell$};
    \end{tikzpicture}
    \]
    and the matrix $V$  has $\upbeta_1 - 1,~\upbeta_2 - 2,~\upbeta_3 - 2, \ldots, \upbeta_{m-1}-2, \upbeta_m -1$ rows, each containing $\{1,1, \ldots,1, 1, 1\},~\{0,1, \ldots,1, 1, 1\}, ~\{0,0, 1,\ldots,1, 1\}, \ldots, \{0,0, \ldots,0, 0, 0\}$ respectively.  The matrix $K$ encodes the $\QDet$ relations starting from $z_{00}$, namely
    \begin{align*}
    z_{0,0}z_{m+1,s_{m+1}} &= z_{1,{s_1}} \cdot z_{1,{s_1}-1}\ldots z_{1,1} \cdot z_{2,{s_2}-1}\ldots z_{2,1} \ldots z_{m,{s_m}-1} \ldots z_{m,1} \cdot z_{m,0}\\
    z_{0,0}z_{2,s_2} &=  z_{1,{s_1}} \cdot z_{1,{s_1}-1}\ldots z_{1,1} \cdot z_{1,0}\\
    z_{0,0}z_{3,s_3} &=  z_{1,{s_1}} \cdot z_{1,{s_1}-1}\ldots z_{1,1} \cdot z_{2,{s_2}-1}\ldots z_{2,1} \cdot z_{2,0}\\
    & \vdots\\		             
    z_{0,0}z_{m,s_{m}} &= z_{1,{s_1}} \cdot z_{1,{s_1}-1}\ldots z_{1,1} \cdot z_{2,{s_2}-1}\ldots z_{2,1} \ldots z_{m-1,s_{m-1}-1} \ldots z_{m-1,1} \cdot z_{m-1,0}.
    \end{align*}
    
    \begin{example}\label{Example4.10}
    	Continuing Example \ref{map1}, here $\upbeta_1=2$ and $\upbeta_2=3$, so the associated matrix $K$ is  
    	\[ 
    	K = \left(\!\!\begin{array}{rrr}
    	0 &0&-1\\
    	0 &-1&0\\
    	-1&1&1\\
    	-1&0&0\\
    	-1&0&0\\
    	1&-1&-1\\
    	1&0&0\\
    	0&1&0\\
    	0&0&1
    	\end{array}      \right)
    	.\]
    \end{example}
    
    \begin{lemma}\label{lem:Q inv and MQ}
    	$Q$ is invertible and further 	
    	\[
    	\begin{tikzpicture}[>=stealth,baseline=(current bounding box.center)] 
    	\begin{scope}[xscale=0.5,yscale=0.5]
    	\foreach \x in {1,...,14}
    	\foreach \y in {1,...,11}
    	{
    		\ifnum\x<\y
    		\node (my-\y-\x) at (\x,-\y) {$\phantom 0$};
    		\else
    		\node (my-\y-\x) at (\x,-\y) {$\phantom 0$};
    		\fi
    	}
    	\end{scope}
    	\draw[<->] ($(my-1-1)+(-0.2,0.3)$) -- node[above] {$\scriptstyle \ell$} ($(my-1-3)+(0.2,0.3)$);
    	\draw[<->] ($(my-1-1)+(-0.3,0.2)$) -- node[left] {$\scriptstyle \ell$} ($(my-3-1)+(-0.3,-0.2)$);
    	\draw[rounded corners] ($(my-1-1)+(-0.2,0.2)$) rectangle ($(my-3-3)+(0.2,-0.2)$);
    	\node at (my-2-2) {$\Id_\ell$};
    	\draw[rounded corners] ($(my-4-1)+(-0.2,0.2)$) rectangle ($(my-7-3)+(0.2,-0.2)$);
    	\draw[<->] ($(my-4-1)+(-0.3,0.2)$) -- node[left] {$MQ= ~~\scriptstyle n+1$} ($(my-7-1)+(-0.3,-0.2)$);
    	\node at ($(my-5-2)+(0,-0.25)$) {$A$};
    	\draw[rounded corners] ($(my-8-1)+(-0.2,0.2)$) rectangle ($(my-11-3)+(0.2,-0.2)$);
    	\draw[<->] ($(my-8-1)+(-0.3,0.2)$) -- node[left] {$ \scriptstyle n+1$} ($(my-11-1)+(-0.3,-0.2)$);
    	\node at ($(my-9-2)+(0,-0.25)$) {$0$};
    	\draw[<->] ($(my-1-4)+(-0.2,0.3)$) -- node[above] {$ \scriptstyle n+1$} ($(my-1-7)+(0.2,0.3)$);
    	\draw[rounded corners] ($(my-1-4)+(-0.2,0.2)$) rectangle ($(my-3-7)+(0.2,-0.2)$);
    	\node at ($(my-2-5)+(0.25,0)$) {$0$};
    	\draw[rounded corners] ($(my-4-4)+(-0.2,0.2)$) rectangle ($(my-7-7)+(0.2,-0.2)$);
    	\node at ($(my-5-5)+(0.25,-0.25)$) {$\Id_{n+1}$};
    	\draw[rounded corners] ($(my-8-4)+(-0.2,0.2)$) rectangle ($(my-11-7)+(0.2,-0.2)$);
    	\node at ($(my-9-5)+(0.25,-0.25)$) {$\Id^*_{n+1}$};
    	\node at ($(my-1-8)+(0.1,0)$) {$\scriptstyle 0$};
    	\node at ($(my-1-8)+(-0.1,0)$) {$\scriptstyle 0$};
    	\node at ($(my-3-8)+(0.1,0)$) {$\scriptstyle 0$};
    	\node at ($(my-3-8)+(-0.1,0)$) {$\scriptstyle 0$};
    	\node at ($(my-4-8)+(0.1,0)$) {$\scriptstyle 0$};
    	\node at ($(my-4-8)+(-0.1,0)$) {$\scriptstyle 0$};
    	\node at ($(my-7-8)+(0.1,0)$) {$\scriptstyle 0$};
    	\node at ($(my-7-8)+(-0.1,0)$) {$\scriptstyle 0$};
    	\node at ($(my-8-8)+(0.1,0)$) {$\scriptstyle 0$};
    	\node at ($(my-8-8)+(-0.1,0)$) {$\scriptstyle 1$};
    	\node at ($(my-11-8)+(0.1,0)$) {$\scriptstyle 0$};
    	\node at ($(my-11-8)+(-0.1,0)$) {$\scriptstyle 1$};
    	\draw[<->] ($(my-1-9)+(-0.2,0.3)$) -- node[above] {$\scriptstyle \ell$} ($(my-1-11)+(0.2,0.3)$);
    	\draw[rounded corners] ($(my-1-9)+(-0.2,0.2)$) rectangle ($(my-3-11)+(0.2,-0.2)$);
    	\node at (my-2-10) {$0$};
    	\draw[rounded corners] ($(my-4-9)+(-0.2,0.2)$) rectangle ($(my-7-11)+(0.2,-0.2)$);
    	\node at ($(my-5-10)+(0,-0.25)$) {$0$};
    	\draw[rounded corners] ($(my-8-9)+(-0.2,0.2)$) rectangle ($(my-11-11)+(0.2,-0.2)$);
    	\node at ($(my-9-10)+(0,-0.25)$) {$0$};
    	\draw[densely dotted] ($(my-1-8)+(0,0.33)$) --  ($(my-11-8)+(0,-0.33)$);
    	\end{tikzpicture}
    	\]
    \end{lemma}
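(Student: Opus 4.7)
The plan is to establish both claims by a single block computation, using that the columns of $K$ are exponent vectors of the quasideterminantal relations already known to lie in $\ker\varphi$ by Proposition~\ref{SurjHom}.

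Splitting $Q$ along the horizontal line that separates its top $\ell+n+2$ rows from the bottom $\ell+1$ rows, and writing $K=\bigl(\begin{smallmatrix}K_{\mathrm{top}}\\ K_{\mathrm{bot}}\end{smallmatrix}\bigr)$ accordingly, a direct reading of the displayed $K$ shows that its bottom $\ell+1$ rows consist of the row $(1,0,\ldots,0)$ followed by $(0\mid\Id_\ell)$, so $K_{\mathrm{bot}}=\Id_{\ell+1}$. Hence
\[
Q=\begin{pmatrix}\Id_{\ell+n+2}&K_{\mathrm{top}}\\ 0&\Id_{\ell+1}\end{pmatrix},
\]
which is unipotent and therefore invertible; this is the first assertion. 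Splitting $M=(M_L\mid M_R)$ correspondingly along its first $\ell+n+2$ columns, block multiplication yields $MQ=(M_L\mid M_L K_{\mathrm{top}}+M_R)=(M_L\mid MK)$. Since $M_L$ already gives the left block of the claimed form for $MQ$, the lemma reduces to showing that $MK=0$.

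For this I would identify each column of $K$ with the exponent vector (in the form ``RHS minus LHS'') of one of the $\ell+1$ quasideterminantal relations printed in the statement. Column~$1$ encodes the top relation $z_{0,0}z_{m+1,s_{m+1}}=z_{1,s_1}z_{1,s_1-1}\cdots z_{m,1}z_{m,0}$: the $+1$'s at rows $\ell+n+2,\ell+n+3$ pick out the LHS factors $z_{0,0}$ and $z_{m+1,s_{m+1}}$, while the $-1$'s in rows $\ell+1,\ldots,\ell+n+1$ pick out the full RHS product from $z_{1,s_1}$ through $z_{m,0}$. For $j=1,\ldots,\ell$, column $j+1$ uses the $j$-th column of $-\Id^*_\ell$ to record $-z_{j+1,s_{j+1}}$, the $j$-th column of $V$ (whose staircase places $1$'s in exactly its first $j$ row groups) to record the partial product $+(z_{1,s_1}\cdots z_{j,1})$, the $-1$ in row $\ell+n+2$ for $-z_{0,0}$, and the $+1$ from $\Id_\ell$ for $+z_{j,0}$; this is exactly the exponent vector of $z_{0,0}z_{j+1,s_{j+1}}=z_{1,s_1}\cdots z_{j,0}$. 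Since each such relation lies in $\ker\varphi$ by Proposition~\ref{SurjHom}, and $\ker\varphi$ is the toric ideal encoded by $M$ as in \S\ref{sec: toric gen}, the corresponding exponent vector lies in $\ker M$. Hence $MK=0$ and $MQ=(M_L\mid 0)$, matching the claimed form.

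The main obstacle is the bookkeeping in the last step: carefully aligning the row and column labellings of Notation~\ref{not: order2} with the factors of each relation, and in particular verifying that the precise row-group sizes $\upbeta_1-1,\upbeta_2-2,\ldots,\upbeta_{m-1}-2,\upbeta_m-1$ of $V$ ensure that its $j$-th column records exactly the partial product $z_{1,s_1}z_{1,s_1-1}\cdots z_{j,1}$ needed for column $j+1$ of $K$.
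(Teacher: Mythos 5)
Your proposal is correct and follows essentially the same route as the paper: invertibility of $Q$ from its upper unitriangular block shape (the paper phrases this as "any unitriangular matrix has determinant one"), and the form of $MQ$ by block multiplication, with the first $\ell+n+2$ columns picked out by the identity block and the last $\ell+1$ columns vanishing because $MK=0$, which the paper deduces from $\QDet\subseteq\Ker_{\mathbb{Z}}$ (Proposition \ref{SurjHom}). Your column-by-column matching of $K$ with the quasiminors starting from $z_{0,0}$ just makes explicit the identification the paper states in the paragraph preceding the lemma (only the minor caveat that the displayed $K$ records the first relation with the opposite sign convention to the others, which is harmless for membership in the kernel).
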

    \begin{proof}
    	$Q$ is invertible since any unitriangular matrix has determinant one. For the second statement, since $\QDet \subseteq \Ker_\mathbb{Z}$, it follows  that $MK=0$.  This justifies the last $\ell+1$ columns above.  The first $\ell+n+2$ columns are clear, since multiplying $M$ on the right by the unit matrix $\Id_{\ell+n+2}$ with zero underneath picks out the first  $\ell+n+2$ columns of $M$ only.  Thus the first  $\ell+n+2$ columns of $M$ are the first  $\ell+n+2$ columns above.
    \end{proof}

    \begin{cor}\label{cor: ker Z 1}
    	$\Ker_\mathbb{Z}$ is generated by the columns of $K$.
    \end{cor}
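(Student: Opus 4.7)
The strategy is to exploit the decomposition of $M$ afforded by the invertible change of basis $Q$ provided by Lemma~\ref{lem:Q inv and MQ}. Since $Q \in \mathrm{GL}_{2\ell+n+3}(\mathbb{Z})$ (it is unitriangular, hence has determinant $\pm 1$), its columns form a $\mathbb{Z}$-basis of $\mathbb{Z}^{2\ell+n+3}$. The last $\ell+1$ of these columns are precisely the columns of $K$, and they already lie in $\Ker_\mathbb{Z}$ by Lemma~\ref{lem:Q inv and MQ} since the corresponding columns of $MQ$ are zero.

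Given an arbitrary $v \in \Ker_\mathbb{Z}$, I would write $v = Qw$ uniquely with $w \in \mathbb{Z}^{2\ell+n+3}$. Then $Mv = 0$ becomes $(MQ)w = 0$. The goal is to show that the first $\ell+n+2$ entries of $w$ must vanish, for then $v$ is an integer combination of the final $\ell+1$ columns of $Q$, which is exactly the conclusion.

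The key computational step is therefore to verify that the first $\ell+n+2$ columns of $MQ$ are $\mathbb{Q}$-linearly independent. This follows directly from the block structure in Lemma~\ref{lem:Q inv and MQ}. Indeed, the top $\ell$ rows of those columns form $[\Id_\ell \mid 0 \mid 0]$, so any null combination has zero coefficients on the first $\ell$ columns. The middle $n+1$ rows of the remaining $n+2$ columns form $[\Id_{n+1} \mid 0]$, forcing zero coefficients on columns $\ell+1,\dots,\ell+n+1$. Finally, the bottom $n+1$ rows of the column indexed by $z_{0,0}$ are all $1$'s (since $z_{0,0} = C_{00}$ is the clockwise cycle), so its coefficient must also vanish. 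Hence $w_1 = \cdots = w_{\ell+n+2} = 0$, as required.

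The main (minor) obstacle is simply to be careful about the indexing and confirm that the $z_{0,0}$ column of $MQ$ is indeed $(0,\dots,0,1,\dots,1)^T$ — which is immediate because $Q$ fixes the first $\ell+n+2$ standard basis vectors and $\varphi(z_{0,0}) = c_{0n}c_{nn-1}\cdots c_{10}$ involves each clockwise arrow exactly once. No saturation argument is needed here because the columns of $K$ are already shown to $\mathbb{Z}$-span $\Ker_\mathbb{Z}$ via the unimodular change of basis, bypassing Step~(2) of the general algorithm in \S\ref{sec: toric gen}.
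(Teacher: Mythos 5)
Your proof is correct and follows essentially the same route as the paper: both rest on Lemma~\ref{lem:Q inv and MQ}, using the unimodularity of $Q$ together with the block form of $MQ$ (last $\ell+1$ columns zero, first $\ell+n+2$ columns of full column rank because of the $\Id_\ell$, $\Id_{n+1}$ and all-ones $z_{0,0}$ blocks) to conclude that $\Ker_\mathbb{Z}$ is spanned by the last $\ell+1$ columns of $Q$, i.e.\ the columns of $K$. The only difference is presentational: where the paper row-reduces $MQ$ to Smith Normal Form, you check the $\mathbb{Q}$-linear independence of the first $\ell+n+2$ columns directly, which is an equivalent computation.
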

    \begin{proof}
    	By the form of $MQ$ in Lemma~\ref{lem:Q inv and MQ}, it is clear that it is possible to obtain Smith Normal Form from $MQ$ using only row operations.  This gives an invertible matrix $R$ for which 
    	\[
    	\begin{tikzpicture}[>=stealth,baseline=(current bounding box.center)] 
    	\begin{scope}[xscale=0.35,yscale=0.35]
    	\foreach \x in {1,...,11}
    	\foreach \y in {1,...,11}
    	{
    		\ifnum\x<\y
    		\node (my-\y-\x) at (\x,-\y) {$\phantom 0$};
    		\else
    		\node (my-\y-\x) at (\x,-\y) {$\phantom 0$};
    		\fi
    	}
    	\end{scope}
    	\draw[<->] ($(my-1-1)+(-0.14,0.21)$) -- node[above] {$\scriptstyle \ell+n+2$} ($(my-1-8)+(0.14,0.21)$);
    	\draw[<->] ($(my-1-1)+(-0.21,0.14)$) -- node[left] {$ RMQ= ~~ \scriptstyle\ell+n+2$} ($(my-8-1)+(-0.21,-0.14)$);
    	\draw[rounded corners] ($(my-1-1)+(-0.14,0.14)$) rectangle ($(my-8-8)+(0.14,-0.14)$);
    	\node at (my-5-5) {$\Id_{\ell+n+2}$};
    	\draw[rounded corners] ($(my-9-1)+(-0.14,0.14)$) rectangle ($(my-11-8)+(0.14,-0.14)$);
    	\draw[<->] ($(my-9-1)+(-0.21,0.14)$) -- node[left] {$\scriptstyle n$} ($(my-11-1)+(-0.21,-0.14)$);
    	\node at ($(my-9-4)+(0.1,-0.4)$) {$0$};
    	\draw[<->] ($(my-1-9)+(-0.14,0.21)$) -- node[above] {$\scriptstyle \ell+1$} ($(my-1-11)+(0.14,0.21)$);
    	\draw[rounded corners] ($(my-1-9)+(-0.14,0.14)$) rectangle ($(my-11-11)+(0.14,-0.14)$);
    	\node at (my-6-10) {$0$};
    	\end{tikzpicture}
    	\] 
    	It follows from Smith Normal Form that $\Ker_\mathbb{Z}$ is generated by the last $\ell+1$ columns of $Q$, which are precisely the columns of $K$.
    \end{proof}
    
    \begin{remark}
    	In the case $a=1,$ equivalently for the groups $\frac{1}{r}(1,1),$ consider the matrix $Q$ defined as 
    	\[
    	\begin{tikzpicture}[>=stealth,baseline=(current bounding box.center)] 
    	\begin{scope}[xscale=0.35,yscale=0.35]
    	\foreach \x in {1,...,10}
    	\foreach \y in {1,...,10}
    	{
    		\ifnum\x<\y
    		\node (my-\y-\x) at (\x,-\y) {$\phantom 0$};
    		\else
    		\node (my-\y-\x) at (\x,-\y) {$\phantom 0$};
    		\fi
    	}
    	\end{scope}
    	\draw[<->] ($(my-1-1)+(-0.14,0.21)$) -- node[above] {$\scriptstyle r$} ($(my-1-5)+(0.14,0.21)$);
    	\draw[<->] ($(my-1-1)+(-0.21,0.14)$) -- node[left] {$\scriptstyle r$} ($(my-5-1)+(-0.21,-0.14)$);
    	\draw[rounded corners] ($(my-1-1)+(-0.14,0.14)$) rectangle ($(my-5-5)+(0.14,-0.14)$);
    	\node at ($(my-2-2)+(0.35,-0.35)$) {$\Id_{r}$};
    	\node at ($(my-3-1)+(-1,-0.9)$) {$Q=$};
    	\draw[rounded corners] ($(my-6-1)+(-0.14,0.14)$) rectangle ($(my-10-5)+(0.14,-0.14)$);
    	\draw[<->] ($(my-6-1)+(-0.21,0.14)$) -- node[left] {$\scriptstyle r$} ($(my-10-1)+(-0.21,-0.14)$);
    	\node at ($(my-7-2)+(0.35,-0.35)$) {$0$};
    	
    	\draw[<->] ($(my-1-7)+(0,0.21)$) -- node[above] {$\scriptstyle r-1$} ($(my-1-10)+(0.21,0.21)$);
    	\draw[rounded corners] ($(my-1-7)+(0,0.14)$) rectangle ($(my-10-10)+(0.21,-0.14)$);
    	\node at ($(my-6-9)+(-0.1,0.2)$) {$K$};
    	\node at ($(my-1-6)+(0.1,0)$) {$\scriptstyle 0$};
    	\draw[densely dotted] ($(my-1-6)+(0.1,-0.2)$) -- ($(my-4-6)+(0.1,0.2)$);
    	\node at ($(my-4-6)+(0.1,0)$) {$\scriptstyle 0$};
    	\node at ($(my-5-6)+(0,0)$) {$\scriptstyle -1$};

    	\node at ($(my-6-6)+(0.1,0)$) {$\scriptstyle 1$};
    	\node at ($(my-7-6)+(0.1,0)$) {$\scriptstyle 0$};
    	\draw[densely dotted] ($(my-7-6)+(0.1,-0.2)$) -- ($(my-10-6)+(0.1,0.2)$);
    	\node at ($(my-10-6)+(0.1,0)$) {$\scriptstyle 0$};
    	\end{tikzpicture}
    	\begin{tikzpicture}[>=stealth,baseline=(current bounding box.center)] 
    	\begin{scope}[xscale=0.35,yscale=0.35]
    	\foreach \x in {1,...,10}
    	\foreach \y in {1,...,10}
    	{
    		\ifnum\x<\y
    		\node (my-\y-\x) at (\x,-\y) {$\phantom 0$};
    		\else
    		\node (my-\y-\x) at (\x,-\y) {$\phantom 0$};
    		\fi
    	}
    	\end{scope}
    	\draw[<->] ($(my-1-1)+(-0.14,0.21)$) -- node[above] {$\scriptstyle r-1$} ($(my-1-4)+(0.14,0.21)$);
    	\draw[<->] ($(my-1-4)+(0.21,0.14)$) -- node[right] {$\scriptstyle r-1$} ($(my-4-4)+(0.21,-0.14)$);
    	\draw[rounded corners] ($(my-1-1)+(-0.14,0.14)$) rectangle ($(my-4-4)+(0.14,-0.14)$);
    	\node at ($(my-2-2)+(0.2,-0.2)$) {$\Id^*_{r-1}$};
    	\node at ($(my-3-1)+(-1.5,-0.9)$) {$\quad\mbox{with }\,\, K=$};
    	\node at ($(my-5-2)+(-0.3,0)$) {$\scriptstyle 1$};
    	\draw[densely dotted] ($(my-5-2)+(-0.1,0)$) -- (my-5-4);
    	\node at ($(my-6-2)+(-0.4,0)$) {$\scriptstyle -1$};
    	\node at (my-5-4) {$\scriptstyle 1$};
    	\draw[densely dotted] ($(my-6-2)+(-0.1,0)$) -- ($(my-6-4)+(-0.4,0)$);
    	\node at ($(my-6-4)+(-0.1,0)$) {$\scriptstyle -1$};
    	\draw[rounded corners] ($(my-7-1)+(-0.14,0.14)$) rectangle ($(my-10-4)+(0.14,-0.14)$);
    	\draw[<->] ($(my-7-4)+(0.21,0.14)$) -- node[right] {$\scriptstyle r-1$} ($(my-10-4)+(0.21,-0.14)$);
    	\node at ($(my-7-2)+(0.2,-0.5)$) {$\Id_{r-1}$};
    	\end{tikzpicture}.
    	\] 
    	This gives Smith Normal Form, in a similar way to Lemma~\ref{lem:Q inv and MQ}, with 
    	\[
    	\begin{tikzpicture}[>=stealth,baseline=(current bounding box.center)] 
    	\begin{scope}[xscale=0.35,yscale=0.35]
    	\foreach \x in {1,...,12}
    	\foreach \y in {1,...,11}
    	{
    		\ifnum\x<\y
    		\node (my-\y-\x) at (\x,-\y) {$\phantom 0$};
    		\else
    		\node (my-\y-\x) at (\x,-\y) {$\phantom 0$};
    		\fi
    	}
    	\end{scope}
    	\draw[<->] ($(my-1-1)+(-0.14,0.21)$) -- node[above] {$\scriptstyle 2r$} ($(my-1-10)+(0.14,0.21)$);
    	\draw[<->] ($(my-1-10)+(0.24,0.21)$) -- node[right] {$\scriptstyle r+2$} ($(my-11-10)+(0.24,0.21)$);
    	\node at ($(my-6-1)+(-1,0.14)$) {$MQ=$};
    	\draw[rounded corners] ($(my-1-1)+(-0.14,0.14)$) rectangle ($(my-9-6)+(0.14,-0.14)$);
    	\node at ($(my-5-3)+(0.24,-0.14)$) {$\Id_{r+1}$};
    	\draw[rounded corners] ($(my-1-7)+(-0.14,0.14)$) rectangle ($(my-10-10)+(0.14,-0.14)$);
    	\node at ($(my-3-8)+(0.24,-0.85)$) {$0$};
    	\node at ($(my-9-1)+(0,-0.3)$) {$\scriptstyle 1$};
    	\draw[densely dotted] ($(my-9-1)+(0.25,-0.3)$) -- ($(my-9-6)+(-0.65,-0.3)$);
    	\node at ($(my-9-6)+(0,-0.31)$) {$\scriptstyle -1$};
    	\node at ($(my-9-6)+(-0.35,-0.3)$) {$\scriptstyle 1$};	
    	\end{tikzpicture}
    	\]
    	In particular, this shows that Corollary~\ref{cor: ker Z 1} also holds for $a=1$.
    \end{remark}
    
    Returning to the notation of \S\ref{sec: toric gen}, write $L$ for a spanning set for the kernel of $\varphi_\mathbb{Z}$, which by Corollary~\ref{cor: ker Z 1} can be taken to be the columns of the above matrix $K$.  As calibration, and again in the notation of \S\ref{sec: toric gen}, the associated $I_L$ in Example \ref{Example4.10} is
    \[ 
    I_L = (z_{0,0}z_{4,0} - z_{1,1}z_{3,1}z_{3,0} , ~z_{0,0}z_{2,1} - z_{1,0}z_{1,1}, ~z_{0,0}z_{3,2} - z_{2,0}z_{1,1} ).
    \]
    
    Now we saturate the ideal $I_L$, in general.
	
	\subsection{Step 2: Saturation}
	According to \cite[\S1]{CToricI}, $I_L$ can be saturated by first introducing a new indeterminate $t,$ then calculating a Gr\"obner basis of $H \colonequals  I_L + (tx_1x_2 \ldots x_n - 1),$ then afterwards eliminating the variable $t$. However, this approach makes the ideal inhomogeneous. Instead, following \cite[\S1]{CToricI}, we introduce a \emph{homogeneous} variable $u$ whose degree is equal to the sum of the degrees of the variables $x_1,\ldots ,x_n$, and then calculate the Gr\"obner basis of the ideal $H\colonequals  I_L + (x_1x_2 \ldots x_n - u) $ using the graded reverse lexicographic order.
	
	Most importantly, the two main benefits of this approach are:
	\begin{enumerate}
		\item [(a)]If $J$ is an ideal such that $I_L \subseteq J \subseteq I_M$, then instead of saturating $I_L$, we may saturate $J$, since $(I_L:(x_1 \hdots x_n)^\infty) = I_M = (J:(x_1 \hdots x_n)^\infty),$ see \cite[\S1]{CToricI}. 
		\item [(b)] Often we do not need to saturate ideals with respect to all the indeterminates, in our case we will find a much smaller subset.
	\end{enumerate}
	
	\begin{lemma}\label{satBig}
		$I_L \subseteq \mathrm{QDet}(\mathsf{z}) \subseteq I_M.$	
	\end{lemma}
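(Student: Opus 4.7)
The plan is to verify the two inclusions separately, with the right-hand inclusion being essentially a restatement of what has already been proved and the left-hand inclusion being a direct identification between columns of $K$ and specific quasiminors.

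For the containment $\mathrm{QDet}(\mathsf{z}) \subseteq I_M$, I would simply invoke Proposition \ref{SurjHom}. By construction, $I_M = \ker(\varphi)$ is the toric ideal associated to the matrix $M$, and Proposition \ref{SurjHom} already shows that every quasiminor is sent to zero under $\varphi$. Hence every generator of $\mathrm{QDet}(\mathsf{z})$ lies in $I_M$, and so does the whole ideal. No further calculation is needed here.

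For the containment $I_L \subseteq \mathrm{QDet}(\mathsf{z})$, recall that $I_L$ is generated by the binomials $x^{u^+}-x^{u^-}$ as $u$ runs over the columns of $K$, and that the columns of $K$ were explicitly described (immediately before Example \ref{Example4.10}) as encoding the identities
\begin{align*}
z_{0,0}z_{m+1,s_{m+1}} &= z_{1,s_1}\cdot z_{1,s_1-1}\ldots z_{1,1}\cdot z_{2,s_2-1}\ldots z_{2,1}\ldots z_{m,s_m-1}\ldots z_{m,1}\cdot z_{m,0},\\
z_{0,0}z_{j,s_j} &= z_{1,s_1}\cdot z_{1,s_1-1}\ldots z_{1,1}\cdot z_{2,s_2-1}\ldots z_{2,1}\ldots z_{j-1,s_{j-1}-1}\ldots z_{j-1,1}\cdot z_{j-1,0},
\end{align*}
for $2\leq j\leq m$. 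Comparing with the quasideterminantal matrix in \S\ref{Qdet}, one sees that each of these is precisely the $2\times 2$ quasiminor formed using the first column (the one containing $z_{0,0}$ and $z_{1,s_1}$) together with the column indexed by $j$ (for $2\leq j\leq m+1$), with the interior products given by the middle-row entries $z_{k,s_k-1}\ldots z_{k,1}$. Thus every column of $K$ produces an element of $\mathrm{QDet}(\mathsf{z})$, so $I_L\subseteq \mathrm{QDet}(\mathsf{z})$.

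This approach does not involve significant obstacles: the identification of columns of $K$ with quasiminors is essentially built into the definition of $K$, and the second containment is an immediate consequence of Proposition \ref{SurjHom}. The only delicate point is bookkeeping, namely checking that the ordering of columns and the signs in $K$ match the combinatorics of the quasiminors formed with $z_{0,0}$, which is a direct read-off from the matrix description and the formula for a general quasiminor $a_i b_j - b_i(\prod_{t=i}^{j-1} W_t) a_j$ specialised to $i=1$.
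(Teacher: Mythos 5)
Your proposal is correct and follows essentially the same route as the paper: the inclusion $\mathrm{QDet}(\mathsf{z})\subseteq I_M$ is quoted from Proposition \ref{SurjHom}, and $I_L\subseteq\mathrm{QDet}(\mathsf{z})$ holds because the columns of $K$ encode exactly the quasiminors involving $z_{0,0}$ (the $i=1$ quasiminors), which is what the paper asserts in one line. Your version merely spells out the column-to-quasiminor identification in more detail, which is fine.
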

	
	\begin{proof}
	Since $I_L$ are some of the $\mathrm{QDet}(\mathsf{z})$ relations starting with $z_{0,0}$ only, then $I_L \subseteq \mathrm{QDet}(\mathsf{z})$. By Proposition \ref{SurjHom}, $\mathrm{QDet}(\mathsf{z}) \subseteq I_M.$
	\end{proof}
	We will therefore saturate $\mathrm{QDet}(\mathsf{z})$ instead of $I_L,$ writing this $(\mathrm{QDet}:P^\infty)$, where $P$ is the product of all the $z_{ij}$ variables. The ideal $(\mathrm{QDet}: P^\infty)$ will be obtained by calculating the DegRevLex--Gr\"obner basis of the ideal $H^{\prime} = \mathrm{QDet}(\mathsf{z}) + (P- u) $.

		The set of the monomials $N$ in $\mathbb{C}[\mathsf{z}]$ is a basis of $\mathbb{C}[\mathsf{z}]$, considered as a vector space over $\mathbb{C}.$ So any nonzero polynomial $f \in \mathbb{C}[\mathsf{z}]$ is given as the linear combination $f = \sum_{m \in S}\upmu_m m$ of monomials, where $S \subset N,$ $S$ is finite, and $\upmu_m$ are all nonzero constants. Set $x^a \colonequals  x_1^{a_1} \ldots x_n^{a_n}$ and $x^b \colonequals  x_1^{b_1} \ldots x_n^{b_n}$ with $a = (a_1, \ldots, a_n)$,
	$b = (b_1, \ldots, b_n)$.

	\begin{definition}\cite[\S1]{maclagan2005comp}
		A term order $\succ$ on $S$ is a total order on the monomials of
		$S$ such that
		\begin{enumerate}
			\item [1.]  $x^a \succ x^b$ implies that $x^ax^c \succ x^bx^c$ for all $c \in \mathbb{N}^n$, and
			
			\item [2.] $x^a \succ x^0 = 1$ for all $a \in \mathbb{N}^n \backslash \{0\}.$
		\end{enumerate}
		
	\end{definition}
There are various different term orders on $S,$ with respect to a fixed ordering of the variables, such as $x_1 \succ x_2 \succ \hdots \succ x_n$.  In the \emph{lexicographic (lex) order}, $x^a \succ x^b$ if and only if the first nonzero entry in the vector $a - b$ is positive. 
	
	\begin{example}
		If $x \succ y \succ z,$ then with respect to lexicographic order,
		\[x^4 \succ x^2 y^2 \succ x^2yz \succ xy^3 .\]	
	\end{example}

	Further, if the polynomial ring $\mathbb{C}[x_1, \hdots, x_n]$ is graded, there are additional term orderings. Suppose $\mathbb{C}[x_1, \hdots, x_n]$ is graded by $(d_1, \hdots , d_n)$ where $deg(x_i) = d_i.$ Set $|a|\colonequals  \sum a_id_i$ and $|b|\colonequals  \sum b_id_i$. In the \emph{graded reverse lexicographic order,} $x^a \succ x^b$ if and only if either $|a| >|b|$, or $|a| = |b|$ and the last
	nonzero entry in the vector $a - b$ is negative.

	\begin{example}\label{exammonomial}
		If $x \succ y \succ z \succ w$ and weight vector $(1, 1, 1, 1)$, with respect to the graded reverse lexicographic order                                      
		$$x^2y^2z^3w  \succ x^2y^2z^2w^2  \succ  xz^4  \succ x^3.$$ The total degree comes first and the lower power of $w$ breaks the tie between the two monomials of degree 8.
	\end{example}
	
	When a monomial order $\succ$ has been chosen, the leading monomial of $f = \sum_{m \in S}\upmu_m m $ is the largest $m \in S$ with respect to $\succ$. The leading coefficient is the corresponding $\upmu_m,$ and the leading term is $\upmu_m  m.$ 
	\begin{example}
		For the polynomial ring $\mathbb{C}[t,a,b_1,b_2,c_1,c_2,c_3,d]$ with the weighting vector (13, 5, 4, 4, 3, 3, 3, 5), consider the polynomial $ac_3 - b_1b_2$. Since it is homogeneous, we look for the lower power of $c_3$ to break the tie, thus $b_1b_2  \succ ac_3$ and therefore $-b_1b_2$ is the leading term.
		
	\end{example}

Recall that $\mathrm{QDet}(\mathsf{z})$ consists of the quasiminors of the matrix
		\scriptsize\[ 
		\begin{pmatrix}
		z_{0,0}& & z_{1,0} && z_{2,0}& \cdots & &z_{m,0}\\
		&z_{1,1}\cdot \ldots \cdot z_{1,{s_1}-1}&& z_{2,1}\cdot \ldots \cdot z_{2,{s_2}-1}&&& z_{m,1}\cdot \ldots \cdot z_{m,{s_m}-1}&\\
		z_{1,s_1} & & z_{2,s_2}&& z_{3,s_3}& \cdots & &z_{m+1,s_{m+1}}
		\end{pmatrix},\]
		\normalsize		
and we are aiming to compute a Gr\"obner basis of the ideal $(\mathrm{QDet}:P^\infty)$, where $P$ is the product of all the $z_{ij}$ variables. The next Lemma allows to replace the full product $P$ with a smaller product.

\begin{lemma}\label{Lemma: replace P by E}
$(\mathrm{QDet}: P^\infty)=(\mathrm{QDet}: E^\infty)$ where $E= z_{0,0}z_{2,s_2}z_{3,s_3} \hdots z_{m+1,s_{m+1}}$.
\end{lemma}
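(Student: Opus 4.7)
The plan is to reduce saturation by $P$ to saturation by $E$ using the standard identity $(I : (fg)^\infty) = ((I : f^\infty) : g^\infty)$. I would write $P = E \cdot R$, where $R$ is the product of all variables not appearing in $E$, namely $z_{1,s_1}$, the top-row variables $z_{i,0}$ for $1 \le i \le m$, and the middle variables $z_{i,k}$ for $1 \le i \le m$ and $1 \le k \le s_i - 1$. Then $(\mathrm{QDet} : P^\infty) = ((\mathrm{QDet} : E^\infty) : R^\infty)$, so it suffices to show that each variable appearing in $R$ is a nonzerodivisor modulo $(\mathrm{QDet} : E^\infty)$, or equivalently a nonzerodivisor in the localized ring $A \colonequals (\mathbb{C}[\mathsf{z}]/\mathrm{QDet})[E^{-1}]$.

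My approach is to establish the stronger statement that every variable not in $E$ is in fact a \emph{unit} in $A$, by induction on $j$. For $j=1$, the quasiminor between the first two columns of the defining matrix gives
\[
z_{0,0}\, z_{2, s_2} \equiv z_{1, s_1}\, z_{1, s_1 - 1} \cdots z_{1, 1}\, z_{1, 0} \pmod{\mathrm{QDet}}.
\]
Both factors on the left are in $E$, so the image of the left-hand side is a unit in $A$; since a product in a commutative ring is a unit only if each factor is, all of $z_{1, s_1}, z_{1, s_1-1}, \ldots, z_{1, 0}$ become units in $A$. For the inductive step I would use the quasiminor between the first and $(j+1)$st columns, which modulo $\mathrm{QDet}$ reads
\[
z_{0,0}\, z_{j+1, s_{j+1}} \equiv z_{1, s_1}\, W_1 W_2 \cdots W_j\, z_{j, 0},
\]
where $W_i = z_{i, s_i - 1} \cdots z_{i, 1}$. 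Again the left-hand side is a unit of $A$, and by induction $z_{1, s_1}$ and $W_1, \ldots, W_{j-1}$ are already units, forcing $W_j \cdot z_{j, 0}$ to be a unit, and consequently $z_{j, 0}$ together with each $z_{j, k}$ for $1 \le k \le s_j - 1$ to be units in $A$. This exhausts every variable not in $E$.

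Once every variable outside $E$ is a unit in $A$, the product $R$ is itself a unit there. Hence if $f \cdot P^n \in \mathrm{QDet}$, its image in $A$ is zero and, since $P^n$ is a unit, $f$ also vanishes in $A$, giving $f \cdot E^{n'} \in \mathrm{QDet}$ for some $n'$. The reverse inclusion $(\mathrm{QDet} : E^\infty) \subseteq (\mathrm{QDet} : P^\infty)$ is immediate since $f E^n \in \mathrm{QDet}$ implies $f P^n = (f E^n) R^n \in \mathrm{QDet}$. The main obstacle will be bookkeeping: identifying precisely which quasiminors are needed (only those of the form involving $z_{0,0}$ at the top-left, scanning across all remaining columns) and checking that they collectively unit-ise every variable not in $E$. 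This is routine once one observes that pairing the first column with the $(j+1)$st column is exactly what is required to reach the $j$th column of variables.
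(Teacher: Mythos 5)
Your proposal is correct and follows essentially the same route as the paper: both pass to the localisation of $\mathbb{C}[\mathsf{z}]/\mathrm{QDet}$ at $E$ and use the quasiminors pairing the first column with each later column (whose left-hand sides $z_{0,0}z_{j,s_j}$ involve only variables of $E$) to conclude that every remaining variable, hence the complementary factor of $P$, becomes a unit there. The only cosmetic differences are that the paper cites \cite[2.6(1)]{CToricI} for the saturation identity where you argue it directly, and your induction is not really needed since each such relation already forces all factors on its right-hand side to be units at once.
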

\begin{proof}
	Since $E$ contains only some of the variables $z_{i,j},$ and $P$ contains them all, write $P=EG.$ The claim is that $(\mathrm{QDet}: (EG)^\infty)=(\mathrm{QDet}: E^\infty).$ But this follows from \cite[2.6(1)]{CToricI} provided we can show that $G$ is invertible in the localisation
	\begin{equation}
	(\mathbb{C}[\mathsf{z}]/\mathrm{QDet})_E = \mathbb{C}[\mathsf{z}]_E/\mathrm{QDet}_E. 
	\label{eqn:localisation}
	\end{equation}
	By definition $G$ contains all the $z_{i,j}$ which are not in $E.$ Now for the quasiminors in $\mathrm{QDet}(\mathsf{z}),$ if we invert $E$ we invert all variables $z_{0,0},z_{2,s_2},z_{3,s_3}, \hdots ,z_{m+1,s_{m+1}}$ in $E,$ this implies that all variables in the left hand side monomials of the quasiminor relations starting from $z_{0,0},$ namely
	\begin{align*}
	z_{0,0}z_{2,s_2} &=  z_{1,{s_1}} \cdot z_{1,{s_1}-1}\ldots z_{1,1} \cdot z_{1,0}\\
	z_{0,0}z_{3,s_3} &=  z_{1,{s_1}} \cdot z_{1,{s_1}-1}\ldots z_{1,1} \cdot z_{2,{s_2}-1}\ldots z_{2,1} \cdot z_{2,0}\\
	& \vdots\\		             
	z_{0,0}z_{m,s_{m}} &= z_{1,{s_1}} \cdot z_{1,{s_1}-1}\ldots z_{1,1} \cdot z_{2,{s_2}-1}\ldots z_{2,1} \ldots z_{m-1,s_{m-1}-1} \ldots z_{m-1,1} \cdot z_{m-1,0}\\
	z_{0,0}z_{m+1,s_{m+1}} &= z_{1,{s_1}} \cdot z_{1,{s_1}-1}\ldots z_{1,1} \cdot z_{2,{s_2}-1}\ldots z_{2,1} \ldots z_{m,{s_m}-1} \ldots z_{m,1} \cdot z_{m,0}
	\end{align*}
	are invertible modulo $\mathrm{QDet}(\mathsf{z}).$ But this implies that all the variables in the right hand side monomials become invertible in $\eqref{eqn:localisation}.$ But the monomials in the right hand side contain all variables, hence $G$ is invertible in $\eqref{eqn:localisation},$ as required.
\end{proof}

	\begin{example}
			For the group $\frac{1}{7}(1,2),$ $\mathrm{QDet}(\mathsf{z})$ consists of the quasiminors of the matrix 
			\[\begin{pmatrix}
			\textcolor{magenta}{z_{0,0}}&  &z_{1,0}&&z_{2,0}&&z_{3,0} \\
			&&&&&z_{3,1}\\
			z_{1,1}& &\textcolor{magenta}{z_{2,1}}&&\textcolor{magenta}{z_{3,2}}&&\textcolor{magenta}{z_{4,0}}\\ 
			\end{pmatrix}.\]
			We saturate $\mathrm{QDet}(\mathsf{z})$ with respect to  $E = z_{0,0}z_{2,1}z_{3,2}z_{4,0}$, which is only the coloured $z$'s.
			
	\end{example}

 The kernel of $\varphi,$ which is the toric ideal $I_M,$ is thus obtained from the saturation $(\QDet:P^\infty)=(\QDet:E^{\infty})$ of Lemma~\ref{Lemma: replace P by E}, which in turn will be obtained by eliminating $u$ in the Gr\"obner basis of the ideal $H=\QDet+ (E-u)$. 

\begin{definition}\label{def_spolynomials}
Let $f,g \in \mathbb{C}[\mathsf{z}]$ be nonzero polynomials.
\begin{enumerate}
\item  Write $\LM (f)$, $\LM (g)$ for the leading monomial of $f$ and $g$ respectively, and $\LT(f)$, $\LT(g)$ for the leading terms (i.e.\ with coefficients). Define $\upgamma = \LCM (f,g)$ to be the least common multiple of the monomials $\LM (f)$ and $\LM (g)$.
\item The S-polynomial of $f$ and $g$ is the combination
\[  
S (f, g) = \left(\frac{\upgamma}{\LT (f)}\right) f - \left(\frac{\upgamma}{\LT (g)}\right) g .
\]				
\end{enumerate}
\end{definition}
 
Recall that  $H=\QDet+ (E-u)$ is generated by the quasiminors $f_{ij}$, together with $f\colonequals E-u$.  We next grade the polynomial ring $\mathbb{C}[u,\mathsf{z}]$.    Recalling the $\mathbbold{i}$ and $\mathbbold{j}$ series in \eqref{eqn:iAndj}, for any $i$ such that $0\leq i\leq m+1$, we declare
\[
\mathrm{deg}(z_{i,j})\colonequals \mathbbold{i}_i+\mathbbold{j}_i ,
\]
which does not depend on $j$. The variable $u$ is graded so that the equation $E-u$ is homogeneous, thus 
\[
\mathrm{deg}(u)\colonequals \mathbbold{i}_0+\mathbbold{j}_0 +\sum_{t=2}^{m+1}(\mathbbold{i}_t+\mathbbold{j}_t).
\]

\begin{example}
Consider $\frac{1}{7}(1,2),$ and $f_{12}  \colonequals  z_{0,0}z_{2,1} - z_{1,0}z_{1,1}$, $f_{34}  \colonequals  z_{2,0}z_{4,0} - z_{3,0}z_{3,1}z_{3,2}$. We calculate $S(f_{12},f_{34})$ with respect to DegRevLex order, to calibrate the reader. The degree of both terms in $f_{12}$ is twelve, so the leading term is thus $- z_{1,0}z_{1,1},$ since the last nonzero entry in 
\[
(0,1,1,0,0,0,0,0,0) - (1,0,0,1,0,0,0,0,0)
\]
is negative. Similarly, the leading term of $f_{34}$ is $- z_{3,0}z_{3,1}z_{3,2},$ since the degree of both terms in $f_{34}$ is twelve, and the last nonzero entry in
\[(0,0,0,0,0,1,1,1,0) - (0,0,0,1,0,0,0,0,1)\] 
is negative. Thus $S(f_{12},f_{34})$ equals

\begin{align*}
&\frac{z_{1,0}z_{1,1}z_{3,0}z_{3,1}z_{3,2}}{- z_{1,0}z_{1,1}}f_{12}
-
\frac{z_{1,0}z_{1,1}z_{3,0}z_{3,1}z_{3,2}}{-z_{3,0}z_{3,1}z_{3,2}}f_{34}	\\ 
&= -z_{3,0}z_{3,1}z_{3,2} (z_{0,0}z_{2,1} - z_{1,0}z_{1,1}) + z_{1,0}z_{1,1} (z_{2,0}z_{4,0} - z_{3,0}z_{3,1}z_{3,2})\\
&= -z_{3,0}z_{3,1}z_{3,2}z_{0,0}z_{2,1} + z_{1,0}z_{1,1}z_{2,0}z_{4,0}.
\end{align*}

\end{example}

\medskip
\noindent
To ease notation in the following Proposition, as in \S\ref{QDetform} write
\scriptsize\[ 
\begin{pmatrix}
z_{0,0}& & z_{1,0} && z_{2,0}& \cdots & &z_{m,0}\\
&z_{1,1}\cdot \ldots \cdot z_{1,{s_1}-1}&& z_{2,1}\cdot \ldots \cdot z_{2,{s_2}-1}&&& z_{m,1}\cdot \ldots \cdot z_{m,{s_m}-1}&\\
z_{1,s_1} & & z_{2,s_2}&& z_{3,s_3}& \cdots & &z_{m+1,s_{m+1}}
\end{pmatrix}\]
\normalsize	
as
\[
\begin{pmatrix}
a_1&  &a_2&  &\hdots& & a_{m+1} \\
&W_1&&W_2&&W_{m}&\\
b_1& &b_2& &\hdots& & b_{m+1}\\ 
\end{pmatrix}.\]
Further, for any $i<j$ set $\sfm_{[i,j]}\colonequals \prod_{t=i}^{j}W_t,$ where as above $W_t = z_{t,1}\cdot \ldots \cdot z_{t,{s_t}-1}.$

\begin{prop}\label{Sreduced}
With respect to the DegRevLex order on $\mathbb{C}[u,\mathsf{z}]$, 
\[	
S(f_{ij},f_{k\ell})=\begin{cases}
-b_k \sfm_{[k,\ell-1]} a_\ell a_ib_j + b_i \sfm_{[i,j-1]}a_ja_kb_\ell &\mbox{if }i<j<k<\ell,  \\
 -b_k \sfm_{[j,\ell-1]} a_\ell a_ib_j + b_i \sfm_{[i,k-1]}a_kb_\ell a_j&\mbox{if }i<k \leq j<\ell, \\
-\sfm_{[j,\ell-1]} a_\ell a_kb_j+a_ja_kb_\ell &\mbox{if }i=k<j<\ell, \\
 b_i\sfm_{[i,k-1]}a_kb_\ell -b_ka_ib_\ell  &\mbox{if }i<k < j = \ell, \\
 b_i \sfm_{[i,k-1]} \sfm_{[\ell,j-1]}a_j a_kb_\ell - b_k a_\ell  a_ib_j   &\mbox{if }i<k<\ell <j.
\end{cases}
\]		
Furthermore, for any $i,j$
\[
\begin{array}{c}
S(f_{ij},f) = -ua_ib_j + b_i \sfm_{[i,j-1]} a_jE.
\end{array}
\]
\end{prop}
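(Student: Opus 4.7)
The plan is a direct case-by-case computation using Definition \ref{def_spolynomials}. The first step is to identify the leading terms of $f_{ij}$ and of $f=E-u$.

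Under the DegRevLex order on $\mathbb{C}[u,\mathsf{z}]$, with $u$ placed first in the variable ordering and the $z_{i,j}$'s ordered primarily by increasing first index $i$, I claim $\LT(f_{ij})=-b_i\sfm_{[i,j-1]}a_j$ and $\LT(E-u)=-u$. Both monomials of $f_{ij}=a_ib_j-b_i\sfm_{[i,j-1]}a_j$ have the same total degree, and among the variables appearing in $f_{ij}$ the one of largest first index is $b_j=z_{j,s_j}$, which occurs only in the monomial $a_ib_j$. Thus the last nonzero entry in the exponent difference of $b_i\sfm_{[i,j-1]}a_j$ minus $a_ib_j$ lies at the $b_j$-coordinate and is $-1$, so $b_i\sfm_{[i,j-1]}a_j\succ a_ib_j$ under DegRevLex. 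The claim for $E-u$ is similar: both terms have the same total degree, $u$ is the largest variable, $E$ contains $z_{m+1,s_{m+1}}$ whereas $u$ does not, so the last nonzero entry of $u-E$ sits at a $z$-coordinate and is negative, giving $u\succ E$.

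With these leading terms in hand, I would compute $S(f_{ij},f_{k\ell})$ by identifying the common variables of the two leading monomials, case by case. The variables of $\LM(f_{ij})$ are $b_i=z_{i,s_i}$, the entries of $\sfm_{[i,j-1]}$ (each of the form $z_{t,r}$ with $t\in[i,j-1]$ and $1\leq r\leq s_t-1$), and $a_j=z_{j-1,0}$; the analogous description holds for $\LM(f_{k\ell})$. A direct check shows the overlap is empty in Case 1 (so $\upgamma$ is simply the product of leading monomials); equals $\sfm_{[k,j-1]}$ in Case 2; equals $b_i\,\sfm_{[i,j-1]}$ in Case 3 (using $b_i=b_k$); equals $\sfm_{[k,j-1]}\,a_j$ in Case 4 (using $a_j=a_\ell$); and equals $\sfm_{[k,\ell-1]}$ in Case 5 (where $[k,\ell-1]$ is nested strictly inside $[i,j-1]$). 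In each case, substituting $\upgamma$ and the leading terms into Definition \ref{def_spolynomials} produces four monomials; two of them each equal $\upgamma$ by the associativity identity $\sfm_{[p,q]}\sfm_{[q+1,r]}=\sfm_{[p,r]}$, and cancel. The remaining two monomials are precisely those in the statement.

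For $S(f_{ij},f)$, the leading monomial $u$ shares no variables with $\LM(f_{ij})$, so $\upgamma=u\cdot b_i\sfm_{[i,j-1]}a_j$, and Definition \ref{def_spolynomials} yields
\[
S(f_{ij},f)=-u\bigl(a_ib_j-b_i\sfm_{[i,j-1]}a_j\bigr)+b_i\sfm_{[i,j-1]}a_j\,(E-u),
\]
after which the common term $u\,b_i\sfm_{[i,j-1]}a_j$ cancels to leave $-ua_ib_j+b_i\sfm_{[i,j-1]}a_j\,E$, as claimed. The main obstacle is the bookkeeping in Case 5, where one must split $\sfm_{[i,j-1]}=\sfm_{[i,k-1]}\sfm_{[k,\ell-1]}\sfm_{[\ell,j-1]}$ carefully both to identify the overlap and to verify that the two middle monomials from the S-polynomial coincide; the other four cases are routine applications of the same interval-splitting identity, and the computation of $S(f_{ij},f)$ is immediate.
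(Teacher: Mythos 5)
Your proposal is correct and follows essentially the same route as the paper: take $\LT(f_{ij})=-b_i\sfm_{[i,j-1]}a_j$ and $\LT(f)=-u$, form the least common multiple of the leading monomials case by case, and expand, with the leading-term cancellation verified via $\sfm_{[p,q]}\sfm_{[q+1,r]}=\sfm_{[p,r]}$. The differences are only presentational: you make the variable order and the leading-monomial verification explicit (the paper only calibrates this with an example) and you record the overlap of leading monomials in all five cases, whereas the paper writes out the case $i<j<k<\ell$ and $S(f_{ij},f)$ in full and declares the remaining cases similar.
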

\begin{proof}
In the case  $i<j<k<\ell $, the $S$-polynomial $S(f_{ij},f_{k\ell })$ equals
\begin{align*}
	&\frac{b_i \sfm_{[i,j-1]}a_j \cdot b_k \sfm_{[k,\ell-1]} a_\ell  }{-b_i \sfm_{[i,j-1]}a_j}f_{ij}
-
	\frac{b_i \sfm_{[i,j-1]}a_j \cdot b_k \sfm_{[k,\ell-1]} a_\ell }{-b_k \sfm_{[k,\ell-1]}a_\ell}f_{k\ell }	\\ 
	&= -b_k \sfm_{[k,\ell-1]} a_\ell f_{ij} + b_i \sfm_{[i,j-1]}a_j f_{k\ell }\\
	&= 
    -b_k \sfm_{[k,\ell-1]} a_\ell \left(a_ib_j - b_i \sfm_{[i,j-\ell]}a_j\right) 
    + b_i \sfm_{[i,j-1]}a_j \left(a_kb_\ell  - b_k \sfm_{[k,\ell-1]}a_\ell \right)\\
	&= -b_k \sfm_{[k,\ell-1]} a_\ell a_ib_j + b_i \sfm_{[i,j-1]}a_ja_kb_\ell.
	\end{align*}
All other cases are similar.  For the final claim, the $S$-polynomial $S(f_{ij},f) $ equals
\begin{align*}
&\frac{ub_i \sfm_{[i,j-1]}a_j }{-b_i \sfm_{[i,j-1]} a_j}f_{ij}-
	\frac{ub_i \sfm_{[i,j-1]}a_j }{-u}f\\
	&=-u f_{ij} + b_i \sfm_{[i,j-1]} a_j f\\
	&=-u \left(a_ib_j - b_i \sfm_{[i,j-1]}a_j\right) + b_i \sfm_{[i,j-1]} a_j (E-u)\\
	&=-ua_ib_j + b_i \sfm_{[i,j-1]} a_jE.\qedhere
	\end{align*} 	
\end{proof}

\begin{definition}
A polynomial $f$ is reducible by $g$ to $r$, written $f \xrightarrow{g} r$, if $\LM (g)$ divides some monomial $m$ in $f$ and 
\[
r = f - \dfrac{\upmu_m m}{\LT (g)} \cdot g.
\] 
We say this is lead reducible if $\LM (g) \mid \LM (f)$, and 
\[
r = f - \dfrac{\LT (f)}{\LT (g)} \cdot g. 
\]
\end{definition}

\begin{definition}
	A polynomial $f$ is reducible or lead reducible by a set $G= \{g_1, \ldots , g_s\},$ denoted by 	$f \xrightarrow{G} r,$ if
\[
	f=f_1 \xrightarrow{g_{i_1}} f_2 \xrightarrow{g_{i_2}} \ldots \xrightarrow{g_{i_m}} f_m = r,
\]
and if $r$ cannot be reduced any further, then we call $r$ the normal form or remainder of $f$ modulo $G.$ 
\end{definition}
For multivariate polynomials, the remainder is not unique and this leads us to the Gr\"obner basis theory.  We will compute the Gr\"obner basis of $H= \QDet+(E-u)$ using Buchberger's algorithm.  Write $\scrS$ for the set of generators of  $\QDet$ given by all the quasiminors $f_{ij}$, together with $f=E-u$.

\begin{example}
For the group $\frac{1}{7}(1,2)$, with matrix
\[
\begin{pmatrix}
			\textcolor{magenta}{z_{0,0}}&  &z_{1,0}&&z_{2,0}&&z_{3,0} \\
			&&&&&z_{3,1}\\
			z_{1,1}& &\textcolor{magenta}{z_{2,1}}&&\textcolor{magenta}{z_{3,2}}&&\textcolor{magenta}{z_{4,0}}\\ 
\end{pmatrix},\]
the ideal $H=\QDet + (E-u)$ is generated by
\begin{align*}
			f_{12} & \colonequals  z_{0,0}z_{2,1} - z_{1,1}z_{1,0} &   f_{24} & \colonequals z_{1,0}z_{4,0} - z_{2,1}z_{3,1}z_{3,0}\\
			f_{13} & \colonequals  z_{0,0}z_{3,2} - z_{1,1}z_{2,0} & f_{34} & \colonequals  z_{2,0}z_{4,0} - z_{3,2}z_{3,1}z_{3,0}\\  
			f_{14} & \colonequals  z_{0,0}z_{4,0} - z_{1,1}z_{3,1}z_{3,0} &   f      & \colonequals  z_{0,0}z_{2,1}z_{3,2}z_{4,0} - u,\\ 
			f_{23} & \colonequals  z_{1,0}z_{3,2} - z_{2,1}z_{2,0} 
\end{align*}
and so $\scrS=\{ f_{12}, f_{13}, f_{14}, f_{23}, f_{24}, f_{34}, f\}$.
\end{example}

\begin{cor}\label{cor:reduce}
The S-polynomials in Proposition~\textnormal{\ref{Sreduced}} are reduced to zero by the set $\scrS$.
\end{cor}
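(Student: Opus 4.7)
The plan is a direct case-by-case verification by running Buchberger reduction on each of the six explicit expressions recorded in Proposition \ref{Sreduced}. No deep structural input is needed: the S-polynomials have been computed in a form that essentially displays how each reduction must proceed.

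As a preliminary step, I first observe that under the DegRevLex grading on $\mathbb{C}[u,\mathsf{z}]$ defined just above Proposition \ref{Sreduced}, the leading monomial of the quasiminor $f_{pq} = a_p b_q - b_p\,\sfm_{[p,q-1]}\, a_q$ is $b_p\,\sfm_{[p,q-1]}\,a_q$.  This is the pattern already illustrated in the $\frac{1}{7}(1,2)$ example for $f_{12}$ and $f_{34}$, and in general it follows from a short degree-and-tie-breaking calculation using the $\mathbbold{i},\mathbbold{j}$-series.  Consequently, reducing by $f_{pq}$ is exactly the substitution $b_p\,\sfm_{[p,q-1]}\,a_q\leadsto a_p b_q$ wherever $\LM(f_{pq})$ appears as a submonomial.

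For each of the five quasiminor-quasiminor cases, the two monomial terms of $S(f_{ij},f_{k\ell})$ are built so that each contains, as a submonomial, $\LM(f_{pq})$ for some $f_{pq}\in\scrS$, and these two reductions produce the same monomial with opposite signs, which cancel.  To make the pattern concrete in the representative case $i<j<k<\ell$: the first term $-b_k\sfm_{[k,\ell-1]}a_\ell\cdot a_i b_j$ is divisible by $\LM(f_{k\ell})$, and the reduction substitutes $b_k\sfm_{[k,\ell-1]}a_\ell\leadsto a_k b_\ell$ to give $-a_i b_j a_k b_\ell$; the second term $b_i\sfm_{[i,j-1]}a_j\cdot a_k b_\ell$ is divisible by $\LM(f_{ij})$ and reduces to $+a_i b_j a_k b_\ell$, so the net result is $0$.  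The remaining four cases ($i<k\leq j<\ell$, $i=k<j<\ell$, $i<k<j=\ell$, $i<k<\ell<j$) are entirely parallel: in each of them the two monomials are again symmetric under the swap determined by applying $f_{ij}$ to one side and $f_{k\ell}$ (and in some overlapping cases, a third $f_{pq}$ whose indices are read off the leftover factors) to the other, and the resulting monomials cancel.

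The final S-polynomial $S(f_{ij},f) = -u a_i b_j + b_i\,\sfm_{[i,j-1]}\,a_j\, E$ is handled in two steps.  Its second term is divisible by $\LM(f_{ij}) = b_i\sfm_{[i,j-1]}a_j$, so reducing by $f_{ij}$ substitutes $b_i\sfm_{[i,j-1]}a_j\leadsto a_i b_j$ and collapses the expression to $-u a_i b_j + a_i b_j E = a_i b_j (E-u) = a_i b_j\cdot f$.  This is a multiple of $f$, so one further reduction by $f$ yields $0$.

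Overall, there is no real obstacle: the key content is in the preliminary check that each $f_{pq}$ has the expected leading monomial, after which the cancellations are forced by the symmetric shape of the S-polynomials.  The main work is organisational bookkeeping to record, in each of the six cases, which generator in $\scrS$ is applied to which monomial.
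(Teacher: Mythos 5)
Your proposal follows the paper's proof in all essentials: identify $\LM(f_{pq})=b_p\sfm_{[p,q-1]}a_q$, then reduce each monomial of the S-polynomial by a suitable element of $\scrS$ and observe that the two resulting monomials cancel; your full treatments of the case $i<j<k<\ell$ and of $S(f_{ij},f)$ agree with the paper's (the latter differs only in the trivial matter of which term you reduce first).

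The one point you should tighten is the description of the four remaining cases. In the overlapping case $i<k\leq j<\ell$ with $k<j$, the S-polynomial is $-b_k\sfm_{[j,\ell-1]}a_\ell a_ib_j + b_i\sfm_{[i,k-1]}a_k b_\ell a_j$, and \emph{neither} monomial is divisible by $\LM(f_{ij})=b_i\sfm_{[i,j-1]}a_j$ or by $\LM(f_{k\ell})=b_k\sfm_{[k,\ell-1]}a_\ell$, because the factors $W_k,\dots,W_{j-1}$ are absent; so your main recipe, ``apply $f_{ij}$ to one side and $f_{k\ell}$ to the other,'' fails as literally stated. What works, and what the paper does, is to reduce by the mixed-index quasiminors: $f_{j\ell}$ and then $f_{ik}$ in this case, $f_{j\ell}$ alone when $i=k<j<\ell$, $f_{ik}$ alone when $i<k<j=\ell$, and $f_{ik}$ followed by $f_{\ell j}$ when $i<k<\ell<j$. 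These are legitimate reducers precisely because $\scrS$ contains \emph{all} quasiminors, not just the two whose S-polynomial is being considered. Your parenthetical about ``a third $f_{pq}$ whose indices are read off the leftover factors'' gestures at this, but it should be promoted to the main statement, with the divisibility by $\LM(f_{ik})$, $\LM(f_{j\ell})$ or $\LM(f_{\ell j})$ checked in each case; once that is done the cancellations go through exactly as in your representative case.
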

\begin{proof}
In the case $i<j<k<\ell $, by \ref{Sreduced} $S(f_{ij},f_{k\ell }) = -b_k \sfm_{[k,\ell-1]} a_\ell a_ib_j + b_i \sfm_{[i,j-1]}a_ja_kb_\ell$, which has leading term $-b_k \sfm_{[k,\ell-1]} a_\ell a_ib_j$.  This leading term is divisible by $ \LT (f_{k\ell }) $ so 
	\begin{align*}
	S(f_{ij},f_{k\ell }) & \xrightarrow{f_{k\ell }} S(f_{ij},f_{k\ell }) - (a_ib_j) f_{k\ell } = b_i \sfm_{[i,j-1]}a_ja_kb_\ell  - a_ib_ja_kb_\ell. 
	\end{align*} 
   The leading term of the right hand side is $b_i \sfm_{[i,j-1]}a_ja_kb_\ell $, which is divisible by $ \LT (f_{ij})$, and thus 
   \begin{align*}
   S(f_{ij},f_{k\ell }) & \xrightarrow{f_{k\ell }}  b_i \sfm_{[i,j-1]}a_ja_kb_\ell  - a_ib_ja_kb_\ell  \xrightarrow{f_{ij}} 0.
   \end{align*} 
The next four cases in Proposition~\ref{Sreduced} are very similar, and are summarised by
\[
\begin{array}{ll}
S(f_{ij},f_{k\ell }) \xrightarrow{f_{j\ell }} b_i \sfm_{[i,k-1]}a_ja_kb_\ell  - a_jb_\ell a_ib_k \xrightarrow{f_{ik}} 0& \mbox{if }i<k \leq j<\ell \\
S(f_{ij},f_{k\ell }) \xrightarrow{f_{j\ell }} 0&\mbox{if }i=k<j<\ell \\
S(f_{ij},f_{k\ell }) \xrightarrow{f_{ik}} 0&\mbox{if }i<k < j = \ell \\
S(f_{ij},f_{k\ell }) \xrightarrow{f_{ik}} -a_\ell b_ja_ib_k + a_ib_kb_\ell \sfm_{[\ell,j-1]}a_j \xrightarrow{f_{\ell j}} 0&\mbox{if }i<k<\ell <j.
\end{array} 
\]  
Furthermore, the final case $S(f_{ij},f) =-ua_ib_j + b_i \sfm_{[i,j-1]} a_jE$ has leading term $-ua_ib_j$. This is divisible by $ \LT (f) $, and so 
\begin{align*}
S(f_{ij},f) & \xrightarrow{f} S(f_{ij},f) - (a_ib_j)f = b_i \sfm_{[i,j-1]} a_jE - a_ib_jE.
\end{align*}
The leading term of the right hand side is $b_i \sfm_{[i,j-1]} a_jE$, which is divisible by $ \LT (f_{ij})$, and  thus
\begin{align*}
S(f_{ij},f) & \xrightarrow{f} b_i \sfm_{[i,j-1]} a_jE - a_ib_jE
 \xrightarrow{f_{ij}} 0.\qedhere
\end{align*} 
\end{proof}

\begin{cor}\label{Sgrobnerbasis}
	$\scrS$ is a Gr\"obner basis for $\QDet+(E-u)$.
\end{cor}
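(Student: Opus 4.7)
The plan is a direct appeal to Buchberger's criterion: a generating set of an ideal is a Gr\"obner basis (with respect to a chosen monomial order) if and only if the S-polynomial of every pair of generators reduces to zero modulo the set. Since $\scrS$ generates $\QDet+(E-u)$ by construction, it suffices to verify the reduction-to-zero condition for every pair.

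First I would inventory the pairs. The elements of $\scrS$ are the quasiminors $f_{ij}$ (indexed by $1\leq i<j\leq m+1$) together with the extra generator $f=E-u$. Thus pairs split into two kinds: pairs $(f_{ij},f_{k\ell})$ of quasiminors, and pairs $(f_{ij},f)$. For pairs of quasiminors, after using symmetry ($S(g,h)=-S(h,g)$) to assume $i\leq k$, the relative positions of the four indices $i<j$ and $k<\ell$ fall into exactly the five mutually exclusive cases
\[
i<j<k<\ell,\quad i<k\leq j<\ell,\quad i=k<j<\ell,\quad i<k<j=\ell,\quad i<k<\ell<j,
\]
which are precisely the cases enumerated in Proposition~\ref{Sreduced}. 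The pair $(f_{ij},f)$ is handled in the second half of that proposition, and one does not pair $f$ with itself.

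Next I would invoke Corollary~\ref{cor:reduce}, which shows that in each of these cases the S-polynomial computed in Proposition~\ref{Sreduced} reduces to zero by successive applications of elements of $\scrS$. Since every pair's S-polynomial has been shown to reduce to zero modulo $\scrS$, Buchberger's criterion applies and $\scrS$ is a Gr\"obner basis for $\QDet+(E-u)$ with respect to the DegRevLex order on $\mathbb{C}[u,\mathsf{z}]$.

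I do not anticipate a genuine obstacle here; the serious work was done in Proposition~\ref{Sreduced} and Corollary~\ref{cor:reduce}. The only point requiring care is confirming completeness of the case analysis, i.e.\ that the five configurations plus the $(f_{ij},f)$ case really exhaust all unordered pairs from $\scrS$, which is an elementary combinatorial check.
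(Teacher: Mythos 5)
Your proposal is correct and follows the paper's own proof: both invoke Buchberger's criterion, with the reduction-to-zero of all S-polynomials supplied by Proposition~\ref{Sreduced} and Corollary~\ref{cor:reduce}. Your extra remark checking that the five index configurations together with the pairs $(f_{ij},f)$ exhaust all cases is a sensible, if routine, addition that the paper leaves implicit.
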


\begin{proof}
Since by Corollary~\ref{cor:reduce} all the $S$-polynomials between elements of $\scrS$ reduce to $0$ modulo $\scrS$, this follows as an immediate consequence of Buchberger's criterion \cite[\S2]{Criterion}.
\end{proof}

\subsection{Recovering the Artin component}
For any group $\frac{1}{r}(1,a)$, the quiver of the reconstruction algebra is denoted $Q$.  Recall from \S \ref{RepVar} that $\updelta=(1,\hdots,1)$, and further $\scrR\colonequals \mathbb{C}[\Rep(\mathbb{C}Q,\updelta)]$ carries a natural action of $ G \colonequals \textstyle \prod_{q \in Q_0} \mathbb{C}^{\ast}$. The following shows that $\scrR^G,$ which is constructed using only the quiver of the reconstruction algebra, is precisely the Artin component of $\frac{1}{r}(1,a)$.

\begin{theorem}\label{thm: main1}
For any group $ \frac{1}{r}(1,a)$, there is an isomorphism $\scrR^G \cong \frac{\mathbb{C}[\mathsf{z}]}{\mathrm{QDet}(\mathsf{z})}$. 
\end{theorem}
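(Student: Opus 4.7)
The plan is to combine the surjectivity already established in Proposition~\ref{SurjHom} with a saturation argument that identifies the kernel of $\varphi$ with $\mathrm{QDet}(\mathsf{z})$. By Proposition~\ref{SurjHom} the map $\varphi\colon \mathbb{C}[\mathsf{z}]\twoheadrightarrow \scrR^G$ is surjective and $\mathrm{QDet}(\mathsf{z})\subseteq\ker\varphi$, so it descends to a surjection
\[
\bar\varphi\colon \frac{\mathbb{C}[\mathsf{z}]}{\mathrm{QDet}(\mathsf{z})}\twoheadrightarrow \scrR^G.
\]
The work is to show $\bar\varphi$ is injective, equivalently $\ker\varphi\subseteq\mathrm{QDet}(\mathsf{z})$.

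Since $\varphi$ sends each generator $z_{i,j}$ to a monomial in the arrow variables, $\ker\varphi$ is precisely the toric ideal $I_M$ of the matrix $M$ constructed in \S\ref{sec: toric gen}. Applying the general recipe recalled there, together with Corollary~\ref{cor: ker Z 1} (which exhibits the columns of $K$ as a lattice spanning set $L$ for $\ker_{\mathbb{Z}}$), yields $I_M = (I_L : P^{\infty})$, where $P$ is the product of all $z_{i,j}$. Combining this with the inclusion $I_L \subseteq \mathrm{QDet}(\mathsf{z})$ of Lemma~\ref{satBig} and monotonicity of saturation,
\[
I_M \;=\; (I_L : P^{\infty}) \;\subseteq\; (\mathrm{QDet}(\mathsf{z}) : P^{\infty}) \;=\; (\mathrm{QDet}(\mathsf{z}) : E^{\infty}),
\]
the last equality being Lemma~\ref{Lemma: replace P by E}. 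The task reduces to proving $(\mathrm{QDet}(\mathsf{z}):E^{\infty}) \subseteq \mathrm{QDet}(\mathsf{z})$.

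This is where the Gr\"obner basis computation pays off, and will be the main obstacle. By the homogenization-based saturation method of \cite{CToricI} recalled in \S4.4, $(\mathrm{QDet}(\mathsf{z}):E^{\infty})$ can be read off from the DegRevLex Gr\"obner basis of the ideal $H'=\mathrm{QDet}(\mathsf{z})+(E-u)$ by collecting those basis elements that do not involve the auxiliary variable $u$. Corollary~\ref{Sgrobnerbasis} established that $\scrS=\{f_{ij}\}\cup\{E-u\}$ is such a Gr\"obner basis, so the only element in $\scrS$ containing $u$ is $E-u$ itself. Discarding it leaves the quasideterminantal relations $\{f_{ij}\}$, which by definition generate $\mathrm{QDet}(\mathsf{z})$. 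Hence $(\mathrm{QDet}(\mathsf{z}):E^{\infty})=\mathrm{QDet}(\mathsf{z})$.

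Chaining the inclusions,
\[
\mathrm{QDet}(\mathsf{z}) \;\subseteq\; I_M \;\subseteq\; (\mathrm{QDet}(\mathsf{z}) : E^{\infty}) \;=\; \mathrm{QDet}(\mathsf{z}),
\]
forces $\ker\varphi = I_M = \mathrm{QDet}(\mathsf{z})$, and thus $\bar\varphi$ is an isomorphism. The subtle point to check carefully is that the saturation really is recovered by eliminating $u$ from the DegRevLex Gr\"obner basis under the chosen grading on $\mathbb{C}[u,\mathsf{z}]$; the preparatory computation in Proposition~\ref{Sreduced} and Corollary~\ref{cor:reduce} is what makes the Gr\"obner basis so small, with $E-u$ as the unique $u$-carrying element, and this is the feature that collapses the saturation back to $\mathrm{QDet}(\mathsf{z})$.
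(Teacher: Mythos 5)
Your proposal is correct and follows essentially the same route as the paper's proof: surjectivity from Proposition~\ref{SurjHom}, identification of $\ker\varphi$ with the toric ideal $I_M$ via the lattice spanning set of Corollary~\ref{cor: ker Z 1}, the reductions $(I_L:P^\infty)=(\mathrm{QDet}:P^\infty)=(\mathrm{QDet}:E^\infty)$ from Lemma~\ref{satBig} and Lemma~\ref{Lemma: replace P by E}, and finally the Gr\"obner basis $\scrS$ of Corollary~\ref{Sgrobnerbasis} to eliminate $u$ and collapse the saturation back to $\mathrm{QDet}(\mathsf{z})$. The only cosmetic difference is that you sandwich $I_M$ using monotonicity of saturation rather than invoking directly the fact that any ideal squeezed between $I_L$ and $I_M$ has the same saturation, which is an inessential variation.
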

\begin{proof}
By Proposition~\ref{SurjHom} there is a surjective homomorphism $\mathbb{C}[\mathsf{z}] \xrightarrow{\varphi} \scrR^G$. By \cite[\S4]{GrobnerSturmfel}, the kernel of $\varphi$ is a toric ideal $I_M$ of $\mathbb{C}[\mathsf{z}].$  By Corollary \ref{cor: ker Z 1}, the columns of $K$ are a spanning set $L$ for the kernel $\varphi_\mathbb{Z}$ so $I_M = (I_L:P^\infty).$ By Corollary \ref{satBig},  $(I_L: P^\infty)=(\mathrm{QDet}: P^\infty)$, and further $(\mathrm{QDet}:P^\infty) = (\mathrm{QDet}:E^\infty)$ by Lemma \ref{Lemma: replace P by E}. As explained above Definition \ref{def_spolynomials}, the toric ideal $I_M$ is thus obtained from eliminating $u$ from a Gr\"obner basis of $\QDet+(E-u),$ and thus by Corollary \ref{Sgrobnerbasis} by eliminating $u$ from $\scrS$. Therefore,
$I_M = (\QDet:E^\infty)= \scrS \cap \mathbb{C}[\mathsf{z}] = \mathrm{QDet}(\mathsf{z})$. 
\end{proof}

\section{Simultaneous Resolution}\label{SimulResolution}
In this section, the deformed reconstruction algebra is introduced, and is used to achieve simultaneous resolution. 

\subsection{The Deformed Reconstruction Algebra} \label{SecRecAlg}
In what follows, write $l_\wp$ for the number of the vertex associated to the tail of the arrow $k_\wp,$ and set
$d_\wp = l_\wp - l_{\wp-1}. $  Recall that by convention $k_0 = c_{10}$ and $k_{e-2} = a_{n0}.$
\begin{definition}\label{DefRecAlg}
	Given $r,a \in \mathbb{N}$ with $r > a > 1$ such that $(r,a)=1,$ and scalars $\boldsymbol{\lambda} \in \mathbb{C}^{\oplus \upbeta_1 } \oplus \ldots \oplus \mathbb{C}^{\oplus \upbeta_{e-2} },$ write $\boldsymbol{\lambda} = (\boldsymbol{\lambda}_1,\boldsymbol{\lambda}_2, \ldots, \boldsymbol{\lambda}_{e-2})$ with $\boldsymbol{\lambda}_i = (\lambda_{i\upbeta_{i}-1},  \ldots, \lambda_{i1}, \lambda_{i0} ).$ Then the deformed reconstruction algebra $A_{r,a, \boldsymbol{\lambda}}$ is defined to be the path algebra of the quiver $Q$ associated to the Hirzebruch--Jung continued fraction expansion of $\frac{r}{a},$ subject to the following relations (which below, we refer to as the step $i$ relations) for all $i$ such that $1 \leq i \leq e-2.$
    
    \medskip
	\noindent
	If $d_{i} = 0,$  then
	\begin{align*}
	k_{i}C_{0l_{i}} - k_{i-1}A_{0l_{i-1}} &= \lambda_{i,1}\\
	A_{0l_{i-1}}k_{{i-1}} - C_{0l_{i}}k_{i} &=  \lambda_{i,0}.
	\end{align*}
	If $d_{i} > 0,$ then
	\begin{align*}
	k_{i} C_{0l_{i}} - c_{l_{i}l_{i}-1} a_{l_{i}-1l_{i}} &=   \lambda_{i,\upbeta_{i}-1}\\
	a_{l_{i}-1l_{i}} c_{l_{i}l_{i}-1} - c_{l_{i}-1l_{i}-2} a_{l_{i}-2l_{i}-1} &=   \lambda_{i,\upbeta_{i}-2}\\
	&\vdots\\
	a_{l_{i-1}l_{i-1}+1} c_{l_{i-1}+1 l_{i-1}} - k_{{i-1}}A_{0l_{i-1}} &=   \lambda_{i,1}\\
	A_{0l_{i-1}}k_{{i-1}} - C_{0l_{i}} k_{i} &=  \lambda_{i,0}.
	\end{align*} 
\end{definition}

To simplify, write
\begin{equation}
\Delta := \{\boldsymbol{\lambda} \in \mathbb{C}^{\oplus \upbeta_1 } \oplus \ldots \oplus \mathbb{C}^{\oplus \upbeta_{e-2}} \mid \sum_{j=0}^{\upbeta_{i}-1} \lambda_{i,j} =0, ~\forall~ i = 1, \ldots, e-2\}\label{def:Delta}.
\end{equation}
Below we will be most interested in the case where the parameters $\boldsymbol{\lambda}$ in Definition \ref{DefRecAlg} belongs to $\Delta$. This will correspond to the case $\boldsymbol{\lambda}\cdot \updelta = 0$ in \cite{crawley1998noncommutative}, equivalently to the case $t=0$ in symplectic reflection algebras \cite{SRA}.

\begin{remark}
	Let $r > 1,$ and $a=1,$ and consider scalars $\boldsymbol{\lambda} \in (\mathbb{C}^{\oplus 2})^{\oplus e-2}.$ Then the deformed reconstruction algebra $A_{r,1, \boldsymbol{\lambda}}$ is defined to be the path algebra of the quiver $Q$ for $n=1$, $\upalpha_1 = r$ in \S\ref{subsec:ReconA}, subject to the following relations  		
\[\begin{array}{rcl}
a_{2}c_{1} - a_{1}c_{2} = \lambda_{1,1} & and & c_{1}a_{2} - c_{2}a_{1} = \lambda_{1,0}\\
k_{1}c_{1} - a_{2}c_{2} = \lambda_{2,1} & and & c_{1}k_{1} - c_{2}a_{2}= \lambda_{2,0}\\
k_{i-1}c_{1} - k_{i-2}c_{2} = \lambda_{i,1} & and & c_{1}k_{i-1} - c_{2}k_{i-2} = \lambda_{{i},0} ~\forall ~3 \leq i \leq e-2.
\end{array}\]	
\end{remark}

\begin{example}\label{rec3}
	In the case $\boldsymbol{\lambda} \in \Delta,$ the reconstruction algebra of Type $A_{7,3,\boldsymbol{\lambda}}$ associated to $[3,2,2]$ is the path algebra of the quiver in Example \ref{rec1} subject to the relations	
	\begin{align*}
	k_1 C_{01}  &=  c_{10} a_{01} + \lambda_{11} &  a_{30} c_{03} &= c_{32} a_{23} + \lambda_{23}  \\
	C_{01} k_1 &= a_{01} c_{10} - \lambda_{11}  &  a_{23} c_{32}  &= c_{21} a_{12} +  \lambda_{22} \\
	&                  &   a_{12} c_{21} &= k_1 a_{01} + \lambda_{21} \\
	&                     & a_{01} k_1 &=  c_{03} a_{30} - \scriptstyle \sum_{j=1}^{3}\lambda_{2,j}.
	\end{align*}
\end{example}

\begin{example} \label{rec4}
    In the case $\boldsymbol{\lambda} \in \Delta,$ the reconstruction algebra of Type $A_{165,107,\boldsymbol{\lambda}}$ associated to  $[2,3,2,4,3,2,2]$ is the path algebra of the quiver in Example \ref{rec2} subject to the relations
	\begin{align*}
	k_1 C_{02} &= c_{21} a_{12} + \lambda_{12}  & k_3 C_{04} &= k_2 A_{04} + \lambda_{31}   &  	a_{70} c_{07} &= c_{76} a_{67} + \lambda_{53} \\
	a_{12} c_{21} &= c_{10} a_{01}  + \lambda_{11}     &  A_{04} k_2 &= C_{04} k_3 - \lambda_{31}  &  a_{67} c_{76} &= c_{65} a_{56} + \lambda_{52} \\
	a_{01} c_{10} &= C_{02} k_1 - \scriptstyle \sum_{j=1}^{2}\lambda_{1,j}  &     &  & a_{56} c_{65} &= k_4 A_{05} + \lambda_{51} &\\
	&   & k_4 C_{05}  &= c_{54} a_{45} + \lambda_{42}&   A_{05} k_4 &= c_{07} a_{70} - \scriptstyle \sum_{j=1}^{3}\lambda_{5,j}.\\
	k_2 C_{04} &= c_{43} a_{34} + \lambda_{23} &a_{45} c_{54} &=  k_3A_{04} + \lambda_{41} &\\ a_{34} c_{43} &= c_{32} a_{23} + \lambda_{22}   &	A_{04} k_3 &= C_{05} k_4 - \scriptstyle \sum_{j=1}^{2}\lambda_{4,j} & \\
	a_{23} c_{32} &= k_1 A_{02} + \lambda_{21}&   &  & \\
	A_{02} k_1 &= C_{04} k_2 - \scriptstyle \sum_{j=1}^{3}\lambda_{2,j}&   &  &  
	\end{align*}                               
\end{example}

\subsection{Moduli of Deformed Reconstruction Algebras}\label{ModuliofDeformed}
With respect to the ordering of the vertices as in Section 2, fix for the rest of this paper the dimension vector $\updelta = (1,1, \ldots, 1),$ and fix the generic King stability condition $\upvartheta = (-n,1, \ldots, 1).$ Recall that
\[ \Rep(A_{r,a,\boldsymbol{\lambda}},\updelta) /\!\!\!\!/_\upvartheta \mathrm{GL} := \Proj \left(\displaystyle\bigoplus_{n \geq 0} \mathbb{C} [\Rep(A_{r,a,\boldsymbol{\lambda}},\updelta) ] ^ {G, \upvartheta^n}\right).\]

\begin{remark}
If $\boldsymbol{\lambda} \notin \Delta,$ then $\Rep(A_{r,a,\boldsymbol{\lambda}},\updelta) = \emptyset.$ Indeed, given $\boldsymbol{\lambda} \notin \Delta,$ some $\sum_{j=0}^{\upbeta_{i}-1} \lambda_{i,j} \neq 0.$ Now if $M \in \Rep(A_{r,a,\boldsymbol{\lambda}},\updelta),$ then its linear maps between vertices are scalars, which have to satisfy the relations for $A_{r,a,\boldsymbol{\lambda}}.$ Now scalars commute, and thus summing the step $i$ relations gives $\sum_{j=0}^{\upbeta_{i}-1} \lambda_{i,j} = 0,$ which is a contradiction. This is why below we always assume that $\boldsymbol{\lambda} \in \Delta.$ 
\end{remark}

\begin{definition}\label{OpenCharts}
	Let $\boldsymbol{\lambda} \in \Delta,$ and $a > 1.$ For $0 \leq t \leq n,$ define the open set $W_t$ in $\Rep(A_{r,a,\boldsymbol{\lambda}},\updelta) /\!\!\!\!/_\upvartheta \mathrm{GL}$ as follows: $W_0$ is defined by the condition $C_{01} \neq 0,$ $W_n$ by the condition $A_{0n}\neq 0,$ and for $1 \leq t \leq n-1,$ $W_t$ is defined by the conditions $C_{0t+1} \neq 0$ and $A_{0t} \neq 0.$ In the degenerate case when $a=1,$ define the open set $W_1$ by the  condition $a_1 \neq 0,$ and $W_2$ by the  condition $a_2 \neq 0.$
\end{definition}

As in {\cite[4.3]{TypeA}}, $\{W_t \mid 0 \leq l \leq n \}$ forms an open cover of $\Rep(A_{r,a,\boldsymbol{\lambda}},\updelta) /\!\!\!\!/_\upvartheta \mathrm{GL}.$

\begin{prop}\label{Opencover} For any $A_{r,a,\boldsymbol{\lambda}}$ with $a > 1$ and $\boldsymbol{\lambda} \in \Delta,$  the following statements hold
	\begin{enumerate}
		\item [(1) ]Each representation in $W_0$ is determined by $(c_{10},a_{01}) \in \mathbb{C}^2.$
		\item [(2) ]Each representation in $W_t$ is determined by $(c_{t+1t},a_{tt+1}) \in \mathbb{C}^2.$
		\item [(3) ] Each representation in $W_n$ is determined by $(c_{0n}, a_{n0}) \in \mathbb{C}^2.$
	\end{enumerate}
	Thus every open set $W_t$ in the cover is just affine space $\mathbb{A}^2.$

\end{prop}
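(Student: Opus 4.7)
The plan is to, on each chart $W_t$, choose a canonical gauge representative that pins down all but two of the arrows, and then use the deformed relations of Definition \ref{DefRecAlg} to express every remaining arrow as a polynomial in the two free parameters. The group $G = \prod_{q \in Q_0}\mathbb{C}^*$ modulo its trivially-acting diagonal is an $n$-dimensional torus. On $W_t$ with $1 \leq t \leq n-1$, the hypotheses $C_{0t+1} \neq 0$ and $A_{0t} \neq 0$ force each of the $n-t$ clockwise arrows appearing in $C_{0t+1}$ and each of the $t$ anticlockwise arrows appearing in $A_{0t}$ to be nonzero, a total of $n$ arrows; I would use the $n$ effective gauge parameters to normalise each of these to $1$, noting that the two chains involve disjoint subsets of the $\upmu_i$'s, so the resulting linear system is triangular and uniquely solvable. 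The boundary cases $W_0$ and $W_n$ are identical in spirit, using only the $n$ arrows constituting $C_{01}$ respectively $A_{0n}$.

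Once the gauge is fixed, every composition $C_{0l}$ or $A_{0l'}$ built entirely from pinned arrows equals $1$. Substituting into the deformed relations and proceeding step by step in $i = 1, \ldots, e-2$, each relation isolates one newly-appearing arrow: when $d_i > 0$, the chain of $\upbeta_i$ relations sequentially determines $a_{l_{i-1},\,l_{i-1}+1}, \ldots, a_{l_i-1,\,l_i}$ and $k_i$; when $d_i = 0$, only $k_i$ is new and is read off from either of the two step $i$ relations. The hypothesis $\boldsymbol{\lambda} \in \Delta$ enters precisely here: in the commutative setting (dimension vector $(1,\ldots,1)$), the $\upbeta_i$ step~$i$ relations telescope to $\sum_{j=0}^{\upbeta_i - 1}\lambda_{i,j}$, so vanishing of this sum --- exactly the defining condition of $\Delta$ --- guarantees that the $\upbeta_i$ equations carry only $\upbeta_i - 1$ independent constraints, precisely enough to pin down the $\upbeta_i - 1$ new arrows. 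At the end, every arrow of $Q$ is exhibited as a polynomial in $c_{t+1t}$ and $a_{tt+1}$ with coefficients in $\mathbb{C}[\boldsymbol{\lambda}]$, defining a morphism $\Phi\colon \mathbb{A}^2 \to W_t$. This is surjective by construction, injective because the recursive formulae are well-defined functions, and, since no pinned $C$ or $A$ is ever zero so no division arises, $\Phi$ is a morphism of affine varieties; its inverse, the projection onto the two coordinates $c_{t+1t}$ and $a_{tt+1}$, is manifestly regular, giving $W_t \cong \mathbb{A}^2$.

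The main technical obstacle I anticipate is the bookkeeping between the two branches of Definition \ref{DefRecAlg}: the case $d_i = 0$ collapses the chain to a single new unknown $k_i$, so that the two step relations only agree modulo the scalar identity $\lambda_{i,0}+\lambda_{i,1}=0$, whereas when $d_i > 0$ one must match the full chain of $\upbeta_i$ relations against the $\upbeta_i - 1$ new unknowns, and carefully treat the step endpoints $i=1$ (where $k_0 = c_{10}$ feeds in as input) and $i=e-2$ (where $k_{e-2} = a_{n0}$ is the final output). Once this case analysis is made uniform, the remainder is mechanical polynomial substitution, and the identification of the two free parameters on the boundary charts $W_0$ and $W_n$ with $(c_{10},a_{01})$ and $(c_{0n},a_{n0})$ respectively drops out of exactly the same procedure.
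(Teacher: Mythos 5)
Your overall strategy is the same as the paper's: use the torus (modulo the diagonal) to normalise the $n$ arrows forced nonzero by the defining conditions of the chart, then solve the step $i$ relations recursively for the remaining arrows, with $\boldsymbol{\lambda}\in\Delta$ guaranteeing (via telescoping of the $\upbeta_i$ scalar relations) that each step imposes exactly $\upbeta_i-1$ independent conditions and no constraint ever falls on the two surviving coordinates. For $W_0$ your description is correct and matches the paper.

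However, the recursion you describe --- ``proceeding step by step in $i=1,\ldots,e-2$, each relation isolates one newly-appearing arrow,'' with the new unknowns being $a_{l_{i-1},l_{i-1}+1},\ldots,a_{l_i-1,l_i}$ and $k_i$ --- is only valid on $W_0$, and this is a genuine gap for parts (2) and (3). On $W_t$ with $1\le t\le n-1$ the normalised arrows are $c_{0n}=\cdots=c_{t+2,t+1}=1=a_{01}=\cdots=a_{t-1,t}$, so the unknowns include the \emph{clockwise} arrows $c_{t,t-1},\ldots,c_{10}$ as well as the anticlockwise arrows beyond vertex $t+1$ and all the $k$'s. The step $1$ relations then contain several undetermined quantities at once ($k_0=c_{10}$, $c_{21},\ldots$, and clockwise factors inside $C_{0l_1}$), so nothing can be ``isolated'' starting from $i=1$; in particular your remark that $k_0=c_{10}$ ``feeds in as input'' is false on these charts, since $c_{10}$ is itself an unknown. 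The paper instead starts at the straddling step $s$ (with $l_{s-1}\le t$ and $l_s\ge t+1$), whose relations are solved outward from the middle terms involving $c_{t+1,t}$ and $a_{t,t+1}$, determining $k_{s-1}$, $k_s$ and the arrows between $l_{s-1}$ and $l_s$; it then runs two separate inductions, anticlockwise through steps $s+1,\ldots,e-2$ (solving for $a$'s and $k$'s) and clockwise through steps $s-1,\ldots,1$ (solving for $c$'s and $k$'s, reading each step's relations ``the other way round''), and for $W_n$ the whole recursion runs backwards from step $e-2$. Your argument would need to be restructured along these lines before the constraint count and the conclusion $W_t\cong\mathbb{A}^2$ go through for $t\ge 1$. (A minor further point: the two normalised chains share the vertex $0$, so ``disjoint subsets of the $\upmu_i$'s'' should be ``disjoint after fixing $\upmu_0$ by the diagonal,'' but the triangularity argument then works; the paper simply cites \cite[4.3]{TypeA} for this normalisation.)
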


\begin{proof}
	(1) As in {\cite[4.3]{TypeA}}, we can set $c_{0n} = c_{nn-1} = \ldots = c_{21} = 1.$ First, consider the Step 1 relations. 
		
		\smallskip
		If $d_1 = 0,$ then the relations become
		\begin{align*}
		k_1 - c_{10}a_{01} &= \lambda_{1,1}\\
		a_{01}c_{10} - k_1 &=  - \lambda_{1,1}.
		\end{align*}
		Since $a_{01}, c_{10}, k_1$ are scalars, the bottom follows from the top and $k_1$ is in terms of $(c_{10},a_{01})$ with no further relations between $c_{10}$ and $a_{01}.$ If $d_1 > 0,$ then
		\begin{align*}
		k_1 - a_{l_1-1l_1} &=   \lambda_{1,\upbeta_{1}-1}\\
		a_{l_1-1l_1} - a_{l_1-2l_1-1} &=   \lambda_{1,\upbeta_{1}-2}\\
		&\vdots\\
		a_{12} - c_{10}a_{01} &=   \lambda_{1,1}\\
		a_{10}c_{10} - k_1 &=   - \sum_{j=1}^{\upbeta_{1}-1}\lambda_{1,j}.
		\end{align*} 
		The last relation follows by summing the other relations. It is furthermore clear that $k_1$ and all the anticlockwise arrows between vertex 1 and $l_1$ are determined by $(c_{10},a_{01}).$ 
		
		\medskip
		\noindent
		By induction, we can assume that all the anticlockwise arrows between vertex 0 and $l_i$ are determined by $(c_{10}, a_{01})$, as are $k_1, \ldots, k_i$ and furthermore the Step $1, \ldots, i$ relations hold with no further relations between $c_{10}$ and $a_{01}$.
		
		\medskip
		\noindent
		We next establish the induction step, by considering the Step $i+1$ relations. If $d_{i+1}=0$ then the Step $i+1$ relations become
		\begin{align*}
		k_{i+1} - k_iA_{0l_i} &= \lambda_{i+1,1}\\
		A_{0l_i}k_i - k_{i+1}&= -\lambda_{i+1,1}.
		\end{align*}
		The bottom comes from the top and $k_{i+1}$ is in terms of $A_{0l_i}$ and $k_i,$ which by induction are determined by $(c_{10}, a_{01}).$ If $d_{i+1} > 0,$ then
		\begin{align*}
		k_{i+1} - a_{l_{i+1}-1l_{i+1}} &=   \lambda_{i+1,\upbeta_{i+1}-1}\\
		a_{l_{i+1}-1l_{i+1}} -  a_{l_{i+1}-2l_{i+1}-1}  &=  \lambda_{i+1,\upbeta_{i+1}-2}\\
		&\vdots\\	
		a_{l_i,l_i+1} - k_iA_{0l_i} &=  \lambda_{i+1,1}\\
		A_{0l_i}k_i - k_{i+1} &=   - \sum_{j=1}^{\upbeta_{i+1}-1}\lambda_{i+1,j}.
		\end{align*}
		
		\noindent
		The last relation follows by summing the other relations. It is furthermore clear that $k_{i+1}$ and all the anticlockwise arrows between vertex $l_i$ and $l_{i+1}$ are determined by $(c_{10},a_{01}).$ Thus by induction, all arrows are determined by $(c_{10}, a_{01}) \in \mathbb{C}^2.$ 
		
    \medskip
	\noindent
	(2) As in {\cite[4.3]{TypeA}}, we can set $c_{0n} = \ldots = c_{t+2t+1} = 1  =  a_{01} = \ldots = a_{t-1t}$  and show that all the arrows are determined by $(c_{t+1t},a_{tt+1}).$ Let $s-1: = \text{max} \{ j \mid l_j \leq t \},$ and $s : = \text{min} \{ j \mid l_j \geq t+1 \}.$	We start with the anticlockwise direction from vertex $l_s$ to vertex $0$, and then clockwise from vertex $l_{s-1}$ to vertex $0$ in the diagram below.
	
	\[
	\begin{tikzpicture} [bend angle=45, looseness=1]
	\node[name=s,regular polygon, regular polygon sides=12, minimum size=4cm] at (0,0) {}; 
	\node (C1) at (s.corner 1)  {};
	\node (C2) at (s.corner 2)  {};
	\node (C3) at (s.corner 3)  {};
	\node (C4) at (s.corner 4)  {};
	\node (C5) at (s.corner 5)  {$l_s$};
	\node (C6) at (s.corner 6)  {};
	\node (C7) at (s.corner 7)  {$ 0$};
	\node (C8) at (s.corner 8)  {}; 
	\node (C9) at (s.corner 9)  {};
	\node (C10) at (s.corner 10)  {$l_{s-1}$};
	\node (C11) at (s.corner 11)  {};
	\node (C12) at (s.corner 12)  {};   
	\draw[->] (C5) -- node[right]  {$\scriptstyle 1$} (C4);
	\draw[densely dotted,->] (C4) -- node[]  {} (C3); 
	\draw[->] (C3) -- node[below]  {$\scriptstyle 1$} (C2);
	\draw[->] (C2) -- node[below]  {$\scriptstyle c_{t+1t}$} (C1);
	\draw[->] (C1) -- node  {} (C12);
	\draw[densely dotted,->] (C12) -- node  {} (C11);
	\draw[->] (C11) -- node  {} (C10);
	\draw[densely dotted,->] (C10) -- node  {} (C9);
	\draw[densely dotted,->] (C9) -- node  {} (C8);
	\draw[densely dotted,->] (C8) -- node  {} (C7);
	\draw[densely dotted,->] (C7) -- node  {} (C6);
	\draw[densely dotted,->] (C6) -- node  {} (C5);
	\draw[->] [green!70!black](C10) -- node [above]  {$\scriptstyle k_{s-1}$} (C7);
	\draw[->, transform canvas={xshift=0.8ex}] [green!70!black] (C5) -- node[right]  {$\scriptstyle k_{s}$} (C7) ;
	\draw [->,bend right] (C1) to node [above]  {$\scriptstyle a_{tt+1}$} (C2);
	\draw [->,bend right] (C2) to node  {} (C3);
	\draw [densely dotted,->,bend right] (C3) to node {} (C4);
	\draw [->,bend right] (C4) to node {} (C5);
	\draw [densely dotted,->,bend right] (C5) to node {} (C6);
	\draw [densely dotted,->,bend right] (C6) to node {} (C7);
	\draw [densely dotted,->,bend right] (C7) to node  {} (C8);
	\draw [densely dotted,->,bend right] (C8) to node  {} (C9);
	\draw [densely dotted,->,bend right] (C9) to node  {} (C10);
	\draw [->,bend right] (C10) to node [right]  {$\scriptstyle 1$} (C11);
	\draw [densely dotted,->,bend right] (C11) to node  {} (C12);
	\draw [->,bend right] (C12) to node [above]  {$\scriptstyle 1$} (C1);
	\end{tikzpicture} \]
	
	First consider the Step $s$ relations. We claim that $k_{s-1},~k_s$ and all the arrows in between $l_{s-1}$ and $l_s$ are determined by $(c_{t+1t},a_{tt+1})$. Since $d_s > 0,$ the relations become
		\begin{align*}
	k_{s} - a_{l_{s}-1l_{s}} &=   \lambda_{s,\upbeta_{s}-1}\\
	a_{l_{s}-1l_{s}} - a_{l_{s}-2l_{s}-1} &=   \lambda_{s,\upbeta_{s}-2}\\
	&\vdots\\
	a_{t+1t+2} - c_{t+1t}a_{tt+1} &=   \lambda_{s,(t+1)-l_{s-1}+1}\\
	a_{tt+1}c_{t+1t} - c_{tt-1} &=   \lambda_{s,(t+1)-l_{s-1}}\\
	&\vdots\\
	c_{l_{s-1}+2l_{s-1}+1} - c_{l_{s-1}+1l_{s-1}} &=   \lambda_{s,2}\\
	c_{l_{s-1}+1l_{s-1}} - k_{s-1} &=   \lambda_{s,1}\\
	k_{s-1} - k_{s} &=   - \sum_{j=1}^{\upbeta_{s}-1}\lambda_{s,j}.
	\end{align*} 
	The last relation follows by summing the other relations. It is furthermore clear that $k_{s},~ k_{s-1}$ with all the anticlockwise and clockwise arrows between vertex $l_s$ and $l_{s-1}$ are determined by $(c_{t+1t},a_{tt+1}),$ and there are no additional relations between $c_{t+1t}$ and $a_{tt+1}.$
	
	\medskip
	\noindent\emph{Anticlockwise.}
	Hence by induction, we can assume that all the anticlockwise arrows between vertex $l_s$ and $l_p$ are determined by $(c_{t+1t},a_{tt+1})$, as are $k_s, \ldots, k_p$ and furthermore the Step $s, \ldots, p$ relations hold with no further relations between $c_{t+1t}$ and $a_{tt+1}$.
	
	\smallskip
	We next establish the induction step, by considering the Step $p+1$ relations. If $d_{p+1} = 0,$ then the relations become
	\begin{align*}
	k_{p+1} - k_{p}A_{0l_p} &= \lambda_{p+1,1}\\
	A_{0l_p}k_{p} - k_{p+1} &=  - \lambda_{p+1,1}.
	\end{align*}
	and therefore $k_{p+1}$ can be determined by $(c_{t+1t},a_{tt+1})$. If $d_{p+1} > 0,$ then
	\begin{align*}
	k_{p+1} - a_{l_{p+1}-1l_{p+1}} &=   \lambda_{p+1,\upbeta_{p+1}-1}\\
	a_{l_{p+1}-1l_{p+1}} - a_{l_{p+1}-2l_{p+1}-1} &=   \lambda_{p+1,\upbeta_{p+1}-2}\\
	&\vdots\\
	a_{l_pl_p+1} - k_{p}A_{0l_p} &=   \lambda_{p+1,1}\\
	A_{0l_p}k_{p} - k_{p+1} &=   - \sum_{j=1}^{\upbeta_{p+1}-1}\lambda_{p+1,j}.
	\end{align*} 
	The last relation follows by summing the other relations. It is furthermore clear that $k_{p+1}$ and all the anticlockwise arrows between vertex $l_p$ and $l_{p+1}$ are determined by $(c_{t+1t},a_{tt+1}),$ and there are no additional relations between $c_{t+1t}$ and $a_{tt+1}.$
	
	\medskip
	\noindent\emph{Clockwise.}
	Similar to the above, we can assume by induction that all the clockwise arrows between vertex $l_{s-1}$ and $l_{q-1}$ are again determined by $(c_{t+1t},a_{tt+1})$, as are $k_{s-1}, \ldots, k_{q-1}$ and furthermore the Step $q, \ldots, s$ relations hold with no further relations between $c_{t+1t}$ and $a_{tt+1}$.
	
	\smallskip
	We then establish the induction step, by considering the Step $q-1$ relations. If $d_{q-1} = 0,$ then the relations become
	\begin{align*}
	k_{q-1}C_{0l_{q-1}} - k_{q-2} &= \lambda_{q-1,1}\\
	k_{q-2} - C_{0l_{q-1}}k_{q-1} &=  - \lambda_{q-1,1}.
	\end{align*}
	and therefore $k_{q-2}$ can be determined by $(c_{t+1t},a_{tt+1})$. If $d_{q-1} > 0,$ then
	\begin{align*}
	k_{q-1}C_{0l_{q-1}} - c_{l_{q-1}l_{q-1}-1} &= \lambda_{q-1,\upbeta_{q-1}-1}\\
	c_{l_{q-1}l_{q-1}-1} - c_{l_{q-1}-1l_{q-1}-2} &=   \lambda_{q-1,\upbeta_{q-1}-2}\\
	&\vdots\\
	c_{l_{q-2}+1l_{q-2}} - k_{q-2} &=   \lambda_{q-1,1}\\
	k_{q-2} - C_{0l_{q-1}}k_{q-1} &=   - \sum_{j=1}^{\upbeta_{q-1}-1}\lambda_{q-1,j}.
	\end{align*} 
	The last relation follows by summing the other relations. It is furthermore clear that $k_{q-2 }$ and all the clockwise arrows between vertex $l_{q-1}$ and $l_{q-2}$ are determined by $(c_{t+1t},a_{tt+1}).$ 
	
	(3) The proof for $W_n$ is very similar to $W_0$ but instead starts at the Step $e-2$ relations and work backwards to the Step $1$ relations. Thus by induction, all arrows are determined by $(c_{t+1t}, a_{tt+1}) \in \mathbb{C}^2.$ 	\qedhere	
\end{proof}	

\begin{remark} In the degenerate case when $a=1,$ a similar proof of Proposition \ref{Opencover} shows that each representation in $W_1$ is determined by $(c_{1},a_{2}) \in \mathbb{C}^2,$ whilst each representation in $W_2$ is determined by $(c_{1},a_{1}) \in \mathbb{C}^2.$ Again, even in the degenerate case $a=1,$ each open set $W_i$ in the open cover is just affine space $\mathbb{A}^2.$	
\end{remark}

\begin{cor}\label{ResCor}
	For any $A_{r,a,\boldsymbol{\lambda}},$ for the fixed $\upvartheta = (-n,1, \ldots, 1),$ \[\Rep(A_{r, a, \boldsymbol{\lambda}},\updelta) /\!\!\!\!/_\upvartheta \mathrm{GL}  \rightarrow \Rep(A_{r, a, \boldsymbol{\lambda}},\updelta) /\!\!/ \mathrm{GL}\]
	is a resolution of singularities. 
\end{cor}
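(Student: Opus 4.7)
The plan is to verify the three standard ingredients of a resolution of singularities: smoothness of the source, projectivity of the morphism, and birationality. Two of these are essentially immediate from what has already been established.

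First, smoothness of $\Rep(A_{r,a,\boldsymbol{\lambda}},\updelta) /\!\!\!\!/_\upvartheta \mathrm{GL}$ is an immediate consequence of Proposition \ref{Opencover}. That result exhibits an open cover by $W_0,\ldots,W_n$, each isomorphic to $\mathbb{A}^2$, so the GIT quotient is a smooth surface. Projectivity of the morphism
\[
\uppi \colon \Rep(A_{r,a,\boldsymbol{\lambda}},\updelta) /\!\!\!\!/_\upvartheta \mathrm{GL} \longrightarrow \Rep(A_{r,a,\boldsymbol{\lambda}},\updelta) /\!\!/ \mathrm{GL}
\]
is built into the GIT construction, since the source is by definition $\Proj$ of a graded ring of semi-invariants over the coordinate ring of the target.

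The remaining point is birationality, and this is the only step requiring real work. I would proceed by first arguing that the affine quotient $\Rep(A_{r,a,\boldsymbol{\lambda}},\updelta)/\!\!/\mathrm{GL}$ is two-dimensional: once we have this, surjectivity and projectivity of $\uppi$ force a generic fibre to be finite, so $\uppi$ is generically finite between smooth (or at worst normal) surfaces of the same dimension. To upgrade from generically finite to birational, I would identify an open dense subset $U$ of the affine quotient consisting of closed orbits with trivial stabiliser; for dimension vector $\updelta = (1,\hdots,1)$ and the specific $\upvartheta = (-n,1,\ldots,1)$ these orbits are automatically $\upvartheta$-stable, whence $\uppi^{-1}(U) \to U$ is an isomorphism. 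The dimension count itself I would extract from Theorem \ref{thm: main1} together with the map $\uppi\colon \scrR^G \to \Delta$ of \S\ref{SimulResolution}: the affine quotient is, fibrewise over $\boldsymbol{\lambda}\in\Delta$, a deformation of the cyclic surface singularity $\mathbb{C}^2/\tfrac{1}{r}(1,a)$, hence two-dimensional.

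The principal (and really only) obstacle is verifying the birational step cleanly. Smoothness of the source and projectivity of $\uppi$ come for free from Proposition \ref{Opencover} and GIT respectively, but the identification of a dense open locus over which $\uppi$ is an isomorphism requires a little care about which representations are $\upvartheta$-stable with closed orbit. In each of the charts $W_t \cong \mathbb{A}^2$ this is transparent in the coordinates $(c_{t+1\,t}, a_{t\,t+1})$ supplied by Proposition \ref{Opencover}, and patching these local pictures gives the required dense open in the affine quotient.
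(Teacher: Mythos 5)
Your treatment of two of the three ingredients coincides with the paper's proof: smoothness of $\Rep(A_{r,a,\boldsymbol{\lambda}},\updelta) /\!\!\!\!/_\upvartheta \mathrm{GL}$ is read off from the $\mathbb{A}^2$ charts of Proposition \ref{Opencover}, and projectivity is immediate from the $\Proj$ construction. The paper in fact dismisses birationality the same way (``projective birational by construction''), so the only place you genuinely diverge is the detailed birationality argument, and that is where your plan has a real soft spot.

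The problem is your dimension count for the affine quotient. You justify ``$\Rep(A_{r,a,\boldsymbol{\lambda}},\updelta)/\!\!/\mathrm{GL}$ is two-dimensional'' by saying it is, fibrewise over $\boldsymbol{\lambda}\in\Delta$, a deformation of $\mathbb{C}^2/\tfrac{1}{r}(1,a)$. But a fibre of a family whose central fibre is a surface need not be two-dimensional unless the family is flat, and flatness of $\uppi\colon\scrR^G\to\Delta$ is only established afterwards, in Theorem \ref{thm: main2}; moreover the paper's proof of that flatness deduces two-dimensionality of the fibres \emph{from} the resolution of Proposition \ref{Opencover}/Corollary \ref{ResCor}, so invoking it here is circular. (Theorem \ref{thm: main1} alone concerns $\scrR^G$ for the quiver without relations and does not give the fibre dimension.) You also assert, without argument, surjectivity of the GIT morphism and density in the affine quotient of the locus of closed orbits with minimal stabiliser; neither is automatic. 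A non-circular repair along your own lines: on the principal open set where the invariant $z_{0,0}=C_{00}$ is nonzero, every representation of dimension vector $\updelta=(1,\dots,1)$ has all clockwise arrows nonzero, hence is simple, hence $\upvartheta$-stable (your observation that simplicity gives stability for any $\upvartheta$ with $\upvartheta\cdot\updelta=0$ is correct), and over this locus both quotients are $\mathrm{Spec}$ of the same localised invariant ring, so the map is an isomorphism there; by Proposition \ref{Opencover} this locus is dense in each chart $W_t\cong\mathbb{A}^2$, and one then argues it is dense in the affine quotient (e.g.\ once its two-dimensionality or irreducibility is known), rather than deducing the dimension from an as-yet-unproved flatness statement.
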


\begin{proof}
	The morphism is projective birational by construction, and the fact that the variety $\Rep(A_{r, a, \boldsymbol{\lambda}},\updelta) /\!\!\!\!/_\upvartheta \mathrm{GL}$ is regular follows from Proposition \ref{Opencover}, since each chart $W_t$ in the open cover is regular.
\end{proof}

\subsection{Simultaneous Resolution} \label{SimulRes}
Write $(\upalpha_{0,0},\upalpha_{1,0},\ldots,\upalpha_{1,\upbeta_{1}-1},\ldots, \upalpha_{e-1,0})$ for the point in Spec $\left(\frac{\mathbb{C}[\mathsf{z}]}{\mathrm{QDet}(\mathsf{z})}\right)$ corresponding to the maximal ideal $(z_{0,0} - \upalpha_{0,0},\ldots, z_{e-1,0} - \upalpha_{e-1,0}).$ Let $Q$ be the quiver of the reconstruction algebra, and consider the map 
\[ \uppi \colon \Rep(\mathbb{C}Q,\updelta) /\!\!/ \mathrm{GL} = \frac{\mathbb{C}[\mathsf{z}]}{\mathrm{QDet}(\mathsf{z})} \rightarrow \Delta,\]
defined by taking 
\[
\begin{tikzpicture}
\node (A) at (0,0) {$(\upalpha_{0,0},\upalpha_{1,0},\ldots,\upalpha_{1,\upbeta_{1}-1},\ldots, \upalpha_{e-1,0})$};
\node (a) at (0,-2) {$(\upalpha_{i,0} - \upalpha_{i,1}, \upalpha_{i,1} - \upalpha_{i,2}, \ldots, \upalpha_{i,\upbeta_{i}-1} - \upalpha_{i,0})^{e-2}_{i=1}.$};
\draw[|->] (A)--(a);
\end{tikzpicture}
\]

\begin{example} 
For the group $\frac{1}{7}(1,3)$ as in Example \ref{rec1} and \ref{gen 1},  the morphism \[ \Rep(\mathbb{C}Q,\updelta) /\!\!/ \mathrm{GL} \rightarrow \Delta\] is given by
\[
\begin{tikzpicture}
\node (A) at (0,0) {$(\upalpha_{0,0}, \upalpha_{1,0},\upalpha_{1,1},\upalpha_{2,0}, \upalpha_{2,1}, \upalpha_{2,2}, \upalpha_{2,3},  \upalpha_{3,0})$};
\node (a) at (0,-2) {$((\upalpha_{1,0} - \upalpha_{1,1}, \upalpha_{1,1} - \upalpha_{1,0}),(\upalpha_{2,0} - \upalpha_{2,1},\upalpha_{2,1} - \upalpha_{2,2},\upalpha_{2,2} - \upalpha_{2,3},\upalpha_{2,3} - \upalpha_{2,0})).$};
\draw[|->] (A)--(a);
\end{tikzpicture}
\]
The fibre above $((\lambda_{1,1} ,\lambda_{1,0}),  (\lambda_{2,3}, \lambda_{2,2}, \lambda_{2,1}, \lambda_{2,0}  )) \in \Delta$ is the zero locus of 
\begin{align*}
z_{1,0} - z_{1,1} &=  \lambda_{1,1} &  z_{2,0} - z_{2,1} &= \lambda_{23}  \\
z_{1,1} - z_{1,0} &=  \lambda_{1,0} = -\lambda_{1,1}  &  z_{2,1} - z_{2,2} &= \lambda_{22} \\
&                  &   z_{2,2} - z_{2,3} &= \lambda_{21} \\
&                     & z_{2,3} - z_{2,0} &= \lambda_{20} = -\lambda_{21} -\lambda_{22} -\lambda_{23},
\end{align*}
which is $\Rep(A_{7, 3, \boldsymbol{\lambda}},\updelta) /\!\!/ \mathrm{GL}.$
\end{example}

 \begin{remark}\label{fibreabovelamda}
	The fibre above a point $\boldsymbol{\lambda}  \in \Delta$ is precisely $\Rep(A_{r, a, \boldsymbol{\lambda}},\updelta) /\!\!/ \mathrm{GL}.$ Indeed, the fibre above $\boldsymbol{\lambda} \in \Delta$ is the zero locus of 
	\begin{align*}
	z_{i,0} - z_{i,1} &=   \lambda_{i,\upbeta_{i}-1}\\
	z_{i,1} - z_{i,2}  &=  \lambda_{i,\upbeta_{i}-2}\\
	&\vdots\\	
	z_{i,\upbeta_{i}-1} - z_{i,0} &= - \sum_{j=1}^{\upbeta_{i}-1}\lambda_{i,j}
	\end{align*}
for all $i$ such that $1 \leq i \leq e-2$.  By (\ref{eq1}) and Definition \ref{DefRecAlg}, this equals $\Rep(A_{r, a, \boldsymbol{\lambda}},\updelta) /\!\!/ \mathrm{GL}.$
\end{remark}

\begin{theorem}\label{thm: main2}
The diagram 
\[
\begin{tikzpicture}
\node (A) at (0,0) {$\Rep(\mathbb{C}Q,\updelta) /\!\!\!\!/_\upvartheta \mathrm{GL}$};
\node (B) at (4,0) {$\Rep(\mathbb{C}Q,\updelta) /\!\!/ \mathrm{GL}$};
\node (b) at (4,-2) {$\Delta$};
\draw[->] (A)-- node[above]  {} (B);
\draw[densely dotted,->] (A)-- node[below]  {$\upphi$} (b);
\draw[->] (B)-- node[right]  {$\uppi$} (b);
a\end{tikzpicture}
\]
is a simultaneous resolution of singularities in the sense that the morphism $\upphi$ is smooth, and $\uppi$ is flat.
\end{theorem}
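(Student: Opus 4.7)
The plan is to establish the two claims separately: smoothness of $\upphi$ by extending the chart-by-chart analysis of Proposition \ref{Opencover} to the total moduli space, and flatness of $\uppi$ by invoking Hironaka's miracle flatness criterion.

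For smoothness of $\upphi$, I would first observe that the open sets $W_t$ of Definition \ref{OpenCharts} make sense on the whole space $\Rep(\mathbb{C}Q,\updelta) /\!\!\!\!/_\upvartheta \mathrm{GL}$, not merely on a single fibre, and cover it. On each $W_t$ the $G$-action normalises the same $n$ arrows to $1$ as in the proof of Proposition \ref{Opencover}, and the non-normalised arrows exhibit $W_t$ as an affine space. I would then claim a polynomial isomorphism $W_t \cong \mathbb{A}^2 \times \Delta$ under which $\upphi|_{W_t}$ becomes projection to the second factor. The map $W_t \to \mathbb{A}^2 \times \Delta$ is assembled from the two free arrow coordinates $(c_{t+1t},a_{tt+1})$ together with the $\lambda_{i,j}$ defined from the arrows by the formulas of Definition \ref{DefRecAlg}; a telescoping computation at each level $i$ shows that $\sum_j \lambda_{i,j}=0$, so the image lies in $\Delta$. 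Conversely, the inductive solution procedure in the proof of Proposition \ref{Opencover}, now treating $\boldsymbol{\lambda}$ as a tuple of independent parameters rather than fixed scalars, expresses every non-normalised arrow uniquely as a polynomial in $(c_{t+1t},a_{tt+1})$ and the $\lambda_{i,j}$, yielding a polynomial inverse $\mathbb{A}^2 \times \Delta \to W_t$. The compositions are identities because the $\boldsymbol{\lambda}$ computed from arrows via Definition \ref{DefRecAlg} is designed to make those very relations hold identically. Since projection from $\mathbb{A}^2 \times \Delta$ to $\Delta$ is smooth, $\upphi$ is smooth.

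For flatness of $\uppi$, I would apply miracle flatness: a morphism $f\colon X\to Y$ of Noetherian schemes with $Y$ regular, $X$ Cohen--Macaulay, and equidimensional fibres of dimension $\dim X-\dim Y$, is automatically flat. Here $\Delta$ is affine space, hence regular. The source $\scrR^G$ is Cohen--Macaulay by the Hochster--Roberts theorem, since it is the ring of invariants of the reductive torus $G=(\mathbb{C}^{\ast})^{Q_0}$ acting on a polynomial ring. For the dimension of fibres, Remark \ref{fibreabovelamda} identifies the fibre above $\boldsymbol{\lambda}\in\Delta$ with $\Rep(A_{r,a,\boldsymbol{\lambda}},\updelta)/\!\!/\mathrm{GL}$, which by Corollary \ref{ResCor} is birational to the two-dimensional variety $\Rep(A_{r,a,\boldsymbol{\lambda}},\updelta)/\!\!\!\!/_\upvartheta \mathrm{GL}$; hence every fibre of $\uppi$ has dimension two. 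As $\scrR^G$ is a domain (being a subring of the polynomial ring $\scrR$), it follows that $\dim\scrR^G=\dim\Delta+2$, so miracle flatness applies and $\uppi$ is flat.

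The main obstacle will be the careful extension of Proposition \ref{Opencover} from a single fibre to the total space: one must verify that the inductive elimination of arrows goes through uniformly in $\boldsymbol{\lambda}$, and that each stage's telescoping sum produces precisely the constraint $\sum_j\lambda_{i,j}=0$ cutting out $\Delta$ from $\bigoplus_i\mathbb{C}^{\oplus\upbeta_i}$, with no additional constraints imposed on the free arrows $(c_{t+1t},a_{tt+1})$. Once this global normal-form statement is in hand, smoothness of $\upphi$ and the fibre-dimension input for miracle flatness both follow immediately, and Theorem \ref{thm: main2} is reduced to standard invocations of Hochster--Roberts and Hironaka.
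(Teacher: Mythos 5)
Your proposal is correct in substance, but the smoothness half takes a genuinely different route from the paper. The paper never trivialises $\upphi$ locally: it first proves $\upphi$ is \emph{flat} by the same miracle-flatness corollary you use for $\uppi$ (here the source $\Rep(\mathbb{C}Q,\updelta)/\!\!\!\!/_\upvartheta\mathrm{GL}$ is regular, hence Cohen--Macaulay, because for the free path algebra the analogues of the charts $W_t$ of Definition \ref{OpenCharts} are affine spaces, and every fibre $\upphi^{-1}(\boldsymbol{\lambda})=\Rep(A_{r,a,\boldsymbol{\lambda}},\updelta)/\!\!\!\!/_\upvartheta\mathrm{GL}$ is two-dimensional by Proposition \ref{Opencover}), and then upgrades flat to smooth by quoting Liu: a flat morphism whose closed fibres are regular is smooth, with fibrewise regularity again coming from Proposition \ref{Opencover}. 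Your alternative is to rerun the inductive elimination of Proposition \ref{Opencover} with $\boldsymbol{\lambda}$ as indeterminates, producing $W_t\cong\mathbb{A}^2\times\Delta$ with $\upphi$ the projection; dimension bookkeeping checks out ($\dim\Delta=\sum(\upbeta_i-1)=n+\ell$ equals the number of non-normalised, non-free arrows on each chart), the telescoping sums do land in $\Delta$, and the solved-for arrows are visibly polynomial in $(c_{t+1t},a_{tt+1},\boldsymbol{\lambda})$, so this works and in fact gives a stronger local statement (an explicit product structure) at the cost of the uniform-in-$\boldsymbol{\lambda}$ normal form you rightly flag as the crux -- which is more bookkeeping than the paper's two citations, but no new idea beyond Proposition \ref{Opencover}. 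For $\uppi$ your argument coincides with the paper's (Remark \ref{fibreabovelamda} and Corollary \ref{ResCor} for two-dimensionality of fibres, then Matsumura's miracle flatness); the one place you add value is making the Cohen--Macaulay hypothesis on $\scrR^G$ explicit via Hochster--Roberts and supplying the dimension count $\dim\scrR^G=\dim\Delta+2$, both of which the paper leaves implicit in the phrase ``the above can be adapted''.
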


\begin{proof}
	Write $\upphi$ for the composition
\[Y= \Rep(\mathbb{C}Q,\updelta) /\!\!\!\!/_\upvartheta \mathrm{GL} \rightarrow \Rep(\mathbb{C}Q,\updelta) /\!\!/ \mathrm{GL} \rightarrow \Delta.\]
 	We first claim that $\upphi$ is flat. Since $(1)~\Delta$ is regular, $(2)~Y$ is regular (so Cohen-Macaulay) since $\mathbb{C}Q$ is free, so the analogue of the open charts $W_t$ in Definition \ref{OpenCharts} are clearly all affine spaces, $(3)~\mathbb{C}$ is algebraically closed so $\upphi$ takes closed points of $Y$ to closed points of $\Delta,$ and $(4)$ for every closed point $\boldsymbol{\lambda}  \in \Delta,$ for the same reason as in Remark \ref{fibreabovelamda} the fibre $\upphi^{-1}(\boldsymbol{\lambda})$ is $\Rep(A_{r, a, \boldsymbol{\lambda}},\updelta) /\!\!\!\!/_\upvartheta \mathrm{GL}$  which is always two-dimensional by Proposition \ref{Opencover}, it follows  from \cite[Corollary to 23.1]{Matsumura} that $\upphi$ is flat.
	
	Now as in {\cite[3.35]{Liu}} to show that $\upphi$ is smooth, we just require smoothness (equivalently regularity, as we are working over $\mathbb{C}$) at closed points of fibres above closed points $\boldsymbol{\lambda} \in \Delta.$ But as above $\upphi^{-1}(\boldsymbol{\lambda})$ is $\Rep(A_{r, a, \boldsymbol{\lambda}},\updelta) /\!\!\!\!/_\upvartheta \mathrm{GL},$ which is regular at all closed points by Proposition \ref{Opencover}. Thus $\upphi$ is a smooth morphism, as required. 
	
	Finally, the above can be adapted to show that $\uppi$ is flat. We have that $\uppi^{-1}(\boldsymbol{\lambda})=\Rep(A_{r, a, \boldsymbol{\lambda}},\updelta) /\!\!/ \mathrm{GL},$ which is always two-dimensional as a consequence of the resolution of its singularities computed in Proposition \ref{Opencover}. Thus we can still appeal to \cite[Corollary to 23.1]{Matsumura}.
\end{proof}
	
\begin{remark}\label{genericstability}
	The choice of $\upvartheta = (-n,1, \ldots, 1)$ is important. For Kleinian singularities, it is possible to use any generic stability \cite{OnKleinianSing}. In the more general setting here, other stability parameters do not give simultaneous resolution on the nose, as the following Example demonstrates.
	
	\begin{example}
		Consider the group $\frac{1}{3}(1,1),$ with the generic stability condition $\upvartheta_2 = (1,-1)$ and the dimension vector $(1,1).$ Then $\Rep(A_{r, a, 0},\updelta) /\!\!\!\!/_{\upvartheta_2} \mathrm{GL}$ is covered by three affine charts, namely $ U_0 = (a_1 \neq 0),~ U_1 = (a_2 \neq 0),$ and $ U_2 = (k_1 \neq 0).$ If we consider the first chart $U_0,$ we can base change such that $a_1 = 1,$ which gives
		
		\begin{figure}[H]
		\centering
		\begin{tikzpicture}[scale=1.9] 
		\node (A) at (-1,0) {$\mathbb{C}$};
		\node (B) at (0,0) {$\mathbb{C}$};  
		\draw[transform canvas={yshift=2.5ex},->] (A) -- node [gap]  {$\scriptstyle c_{1}$} (B);
		\draw[transform canvas={yshift=1.0ex},->] (A) -- node [gap]  {$\scriptstyle c_{2}$} (B);
		\draw[transform canvas={yshift=-1.0ex},<-] (A) -- node [gap]  {$\scriptstyle 1$} (B);    
		\draw[transform canvas={yshift=-2.5ex},<-] (A) -- node [gap]  {$\scriptstyle a_{2}$}(B);
		\draw[transform canvas={yshift=-4.0ex},<-][green!70!black] (A) -- node [gap]  {$\scriptstyle k_{1}$} (B);
		\end{tikzpicture}
	\end{figure}
    \noindent
        subject to relations
        \begin{align*}
              c_1a_2 &= c_2  &a_2c_1 &= c_2\\
              c_1k_1 &= c_2a_2  & k_1c_1 &= a_2c_2 .
        \end{align*}
		
		\noindent
		This chart is parameterised by the variables $c_1,~ a_2,~k_1,$ subject to the relation $c_1k_1 = c_1a_2^2,$ i.e. $c_1(k_1-a_2^2)=0,$ which is singular. Thus the fibre $\Rep(\mathbb{C}Q,\updelta) /\!\!\!\!/_{\upvartheta_2} \mathrm{GL}$ above the origin of the corresponding $\upphi$ is singular, and so is not a simultaneous resolution.

	\end{example}
\end{remark}

\end{document}